\title[Artin-Nagata Properties, Minimal Multiplicities, and Depth of Fiber Cones]{Artin-Nagata Properties, Minimal Multiplicities, and Depth of Fiber Cones}
\author[Jonathan Monta\~{n}o]{Jonathan Monta\~{n}o}
\address{Jonathan Monta\~{n}o \\ Department of Mathematics \\ Purdue University \\150 North University Street \\West Lafayette, IN. 47907}
\email{jmontano@purdue.edu}
\newtheorem{thm}{\bf Theorem}[section]
\newtheorem{prop}[thm]{\bf Proposition}
\newtheorem{lemma}[thm]{\bf Lemma}
\newtheorem{cor}[thm]{\bf Corollary}
\theoremstyle{definition}
\theoremstyle{remark}
\newtheorem{remark}[thm]{\bf Remark}
\newtheorem{example}[thm]{\bf Example}
\numberwithin{equation}{section}
\newcommand{\HH}[3]{\operatorname{H}^{#1}_{#2}(#3)}
\newcommand\Wtilde[1]{\widetilde{#1}}
\def \height{{\operatorname{ht\,}}}
\def \depth{{\operatorname{depth\, }}}
\def \grade{{\operatorname{grade\, }}}
\def\Ass{\operatorname{Ass}}
\def \Im{{\operatorname{Im\,}}}
\def \Ker{{\operatorname{Ker\,}}}
\def \ann{{\operatorname{ann \,}}}
\def \core{{\operatorname{core \,}}}
\def \soc{{\operatorname{soc \,}}}
\def \projdim{{\operatorname{projdim \,}}}
\def\ls{\leqslant}
\def\gs{\geqslant}
\def\ll{\lambda}
\def\fm{\mathfrak{m}}
\def\fM{\mathfrak{M}}
\def\fp{\mathfrak{p}}
\def \AA{\mathbb A}
\def \C{\mathcal C}
\def \G{\mathcal G}
\def \R{\mathcal R}
\def \N{\mathcal N}
\def \K{\mathcal K}
\def \Z{\mathcal Z}
\def \F{\mathcal F}
\begin{document}

\begin{abstract}
Minimal values of multiplicities of ideals have a strong relation with the depth of blowup algebras. In this paper, we introduce the notion of Goto-minimal $j$-multiplicity for ideals of maximal analytic spread. In a Cohen-Macaulay ring, inspired by the work of S.~Goto, A.~Jayanthan, T.~Puthenpurakal, and J.~Verma, we study the interplay among this new notion, the notion of minimal $j$-multiplicity introduced by C.~Polini and Y.~Xie, and the Cohen-Macaulayness of the fiber cone of ideals satisfying certain residual assumptions. We are also able to provide a bound on the reduction number of ideals of Goto-minimal $j$-multiplicity having either Cohen-Macaulay associated graded algebra, or linear decay in the depth of their powers.
\end{abstract}

\maketitle

\section{Introduction}

The fiber cone of an $R$-ideal $I$, defined as the graded algebra $$\F(I)=\bigoplus_{n=0}^{\infty}I^n/I^n\fm,$$ contains asymptotic information about $I$ and its powers. For instance, the Hilbert function of $I$ gives the minimal number of generators of the powers of $I$. An ideal $J\subseteq I$ is a minimal reduction of $I$ if it is minimal with respect to inclusion in the set of all ideals satisfying $I^{n+1}=JI^n$ for all $n\gg 0$. The least $n$ for which this equality occurs for at least one minimal reduction is called the reduction number of $I$. In the case of an infinite residue field $k$, the analytic spread of $I$, which is the dimension of $\F(I)$, is equal to the minimal number of generators of every minimal reduction of $I$. The Cohen-Macaulayness of $\F(I)$ has been studied by several authors using different types of approaches (see for example \cite{CGPU},\cite{CZ},\cite{JV2},\cite{Sh}, \cite{Vi}). Its importance lies in the fact that when $\F(I)$ is Cohen-Macaulay, the images in $I/I\fm=[\F(I)]_0$ of minimal generating sets of minimal reductions of $I$ are maximal regular sequences of $\F(I)$. This makes it possible to reduce to an Artinian algebra while preserving algebraic and homological properties. The main goal of this paper is to investigate the Cohen-Macaulayness of $\F(I)$ for ideals $I$ with minimal values of the $j$-multiplicity.

The $j$-multiplicity was introduced in \cite{AM} as a generalization of the Hilbert-Samuel multiplicity to arbitrary ideals of a Noetherian local ring $(R,\fm)$. The $j$-multiplicity has been a very active research topic in the last few years as several results for $\fm$-primary ideals have been shown to hold for arbitrary ideals if the Hilbert-Samuel multiplicity is replaced by the $j$-multiplicity. For instance, numerical criteria for integral dependence (Rees' criterion, \cite{FM}), combinatorial interpretation of the multiplicity of monomial ideals (\cite{JM}), and the relation with the Cohen-Macaulayness of blowup algebras (\cite{MX}, \cite{PX1}) have been generalized this way.

In \cite{Ab} S.~ Abhyankar proved the well-known lower bound for the Hilbert-Samuel multiplicity of the maximal ideal of a Cohen-Macaulay ring, $e(\fm)\gs \mu(\fm)-d+1$. This inequality has been extended to $\fm$-primary ideals $I$ in two natural ways. The first one is due to G.~Valla and states $$e(I)\gs \ll(I/I^2)-(d-1)\ll(R/I),$$ where the equality occurs if and only if $I^2=JI$  for one, and then every, minimal reduction $J$ of $I$ (see \cite{V1}). The second one is $$e(I)\gs \mu(I)-d+\ll(R/I)$$ with equality if and only if $ I\fm=J\fm$ (see \cite[2.1]{Go1}). The $\fm$-primary ideals $I$ for which the equality holds in either of the above inequalities have been called ideals of minimal multiplicity. Following Rossi and Valla (see \cite{RV1}), we will distinguish between the two possibilities by referring to ideals satisfying $I^2=JI$ as ideals of minimal multiplicity, and to those with $I\fm =J\fm $ as ideals of Goto-minimal multiplicity. 

There is a strong relation between these two notions and the Cohen-Macaulay\-ness of $\F(I)$ and $\G(I)=\bigoplus_{n=0}^{\infty}I^n/I^{n+1}$. If $I$ is of minimal multiplicity, then $\G(I)$ is a Cohen-Macaulay ring (see \cite{S1}, \cite{V1}), whereas if $I$  is of almost minimal multiplicity, i.e., $e(I)= \ll(I/I^2)-(d-1)\ll(R/I)+1$ then  $\depth \G(I)\gs d-1$ (see \cite{RV2}, \cite{Wa}). If the ideal $I$ is of Goto-minimal multiplicity, it was proved in \cite{Go1} that $\G(I)$ is Cohen-Macaulay if and only if $I^2=JI$; in this case $\F(I)$ is also Cohen-Macaulay. In \cite{JV1} under the assumption that $\depth \G(I)\gs d-2$, it was proved that if $I$ has almost Goto-minimal multiplicity, i.e., $e(I)=\mu(I)-d+\ll(R/I)+1$, then $\depth \F(I)\gs d-1$.

The minimal multiplicity property was successfully extended by C. Polini and Y. Xie in \cite{PX1} to arbitrary ideals of analytic spread $d$. They defined the properties of minimal $j$-multiplicity and almost minimal $j$-multiplicity, and with these new notions they were able to generalize the classical results on the Cohen-Macaulayness of $\G(I)$, under certain residual assumptions for $I$. In this paper, we introduce Goto-minimal $j$-multiplicity and almost Goto-minimal $j$-multiplicity and study the interplay among all these notions and the Cohen-Macaulayness of $\F(I)$, $\G(I)$, and the Rees algebra $\R(I)=\bigoplus_{n=0}^{\infty} I^n$, under the same residual assumptions.

We now outline the content of the paper. In Section 2 we will set up the notation for the rest of the paper and will present some relevant theorems that will be used in the proofs of the main results. In this section we also define Goto-minimal $j$-multiplicity and show that the property is well defined.

The goal of Section 3 is to give the definition and present the main implications of the residual assumptions used in the statements of the results. If $R$ is Cohen-Macaulay and $I$ is an ideal of analytic spread $d$, it turns out that if $I$ satisfies $G_d$ and the Artin-Nagata property $AN_{d-2}^-$, then $I$ being of Goto-minimal $j$-multiplicity is equivalent to $I\fm=J\fm$ for a minimal reduction $J$ of $I$ and this is independent of $J$. These residual conditions, $AN_{d-2}^-$ and $G_d$, are satisfied by wide classes of ideals, such as $\fm$-primary ideals, one-dimensional ideals that are generically a complete intersection, perfect ideals of height two that satisfy $G_d$, and perfect Gorenstein ideals of height three that satisfy  $G_d$.  

In Section 4, we will show two of the main results of this paper. The first one characterizes the Cohen-Macaulayness of $\F(I)$ under residual assumptions and a condition weaker than Goto-minimal $j$-multiplicity. The statement is as follows:

\

\noindent {\bf Theorem \ref{FCM}.} {\it Let $R$ be a Cohen-Macaulay local ring of dimension $d$, with maximal ideal $\fm$, and infinite residue field. Let $I$ be an $R$-ideal with analytic spread $\ell(I)=s$ and reduction number $r$.  Assume $I$ satisfies $G_s$ and $AN^{-}_{s-2}$. Let $J$ be a minimal reduction of $I$ such that $r_J(I)=r$ and assume $J\cap I^{j}\fm=JI^{j-1}\fm$ for every $2\ls j\ls r$, then the following are equivalent:
\begin{enumerate}
\item[i)] $\F(I)$ is Cohen-Macaulay.
\item[ii)] $\depth \G(I)\gs s-1.$
\item[iii)] $\depth \R(I)\gs s.$
\end{enumerate}}

\

The second result is a generalization of \cite[2.7]{Go1}, here $a(\F(I))$ is the $a$-invariant of $\F(I)$:

\

\noindent {\bf Theorem \ref{Theo1}.} {\it 
Let $R$ be a Cohen-Macaulay local ring of dimension $d$, with maximal ideal $\fm$, and infinite residue field. Let $I$ be an $R$-ideal with analytic spread $\ell(I)=s$, grade $g$, and reduction number $r$. Assume $I$ satisfies $G_s$, $AN_{s-2}^-$, and $I\fm=J\fm$ for $J$ a minimal reduction of $I$. Consider the following statements 
\begin{enumerate}
\item[i)]  $\R(I)$ is Cohen-Macaulay (when $g\gs 2$),
\item[ii)] $\G(I)$ is Cohen-Macaulay,
\item[iii)] $\F(I)$ is Cohen-Macaulay and $a(\F(I))\ls -g+1,$
\item[iv)] $r\ls s-g+1.$
\end{enumerate}
Then \textnormal{i) } $\Leftrightarrow$ \textnormal{ ii) } $\Rightarrow$ \textnormal{ iii) } $\Rightarrow$ \textnormal{ iv)}. If in addition $\depth R/I^j\gs d-g-j+1$ for every $1\ls j\ls s-g+1$, then all the statements are equivalent.}

\

This theorem in particular provides a bound for the reduction number of $I$ when $\G(I)$ is Cohen-Macaulay or $\grade I\gs 2$ and $\R(I)$ is Cohen-Macaulay. It is important to remark that all the assumptions of these two theorems are satisfied if $I$ is an ideal of Goto-minimal $j$-multiplicity satisfying $G_d$ and $AN_{d-2}^-$. In Section 5 we generalize \cite[4.4]{JV1}, the main result of this section is the following:

\

\noindent {\bf Theorem \ref{theo3}.} {\it 
Let $R$ be a Cohen-Macaulay local ring of dimension $d$ and with infinite residue field.  Let $I$ be an $R$-ideal of analytic spread $\ell(I)=d$ and grade $g$. Assume $I$ satisfies $G_d$, $AN_{d-2}^{-}$, and $\depth R/I^j \gs \min\{d-g-j+1,\, 1\}$ for $j=1$ and 2. If $I$ is of almost Goto-minimal $j$-multiplicity and $\depth \G(I)\gs d-2$, then $\depth \F(I)\gs d-1$.}

\

Finally, in Section 6 we provide several examples illustrating the main results. In this section we also provide a method to construct ideals of Goto-mi\-ni\-mal $j$-multiplicity and almost Goto-minimal $j$-multiplicity. We apply the method to find a family of ideals of Goto-minimal $j$-multiplicity related to perfect ideals of height two that have a presentation matrix of linear forms.

\section{Preliminaries}

Let $(R,\fm, k)$ be a Cohen-Macaulay local ring of dimension $d$, with maximal ideal $\fm$, and infinite residue field $k$. For an ideal $I$ of $R$, we will denote by $\R(I)=\bigoplus_{n=0}^{\infty} I^n$ the {\it Rees algebra of $I$}, by $\G(I)=\bigoplus_{n=0}^{\infty}I^n/I^{n+1}$ the {\it associated graded algebra of $I$}, and by $\F(I)=\bigoplus_{n=0}^{\infty}I^n/I^n\fm $  the {\it fiber cone of $I$}. The dimension of $\G(I)$ is always $d$ and the one of $\R(I)$ is $d+1$ provided $\grade I>0$. The dimension of $\F(I)$ is called the {\it analytic spread} of $I$ and is denoted by $\ell(I).$

An ideal $J\subseteq I$ is a {\it reduction} of $I$ if $I^{n+1}=JI^n$ for all $n\gg 0$, this condition is equivalent to $I$ being in the integral closure of $J$. The smallest $n$ for which the equality is satisfied is denoted by $r_J(I)$, the {\it reduction number} of $I$ with respect to $J$. A reduction is a {\it minimal reduction} if it is minimal with respect to inclusion. The {\it reduction number} of $I$ denoted by $r(I)$, is defined as $\min\{r_J(I): J \text{ is a minimal reduction of }I\}.$ We will write as $\mu(M)$ the minimal number of generators of the $R$-module $M$. Minimal reductions always exist in our setting of infinite residue field and every minimal reduction $J$ of $I$ is minimally generated by $\ell(I)$ elements, i.e., $\mu(J)=\ell(I)$.

The element $x\in I$ is a {\it superficial element} of $I$ if there exists $c>0$ such that $(I^{n+1}:x)\cap I^c=I^n$ for every $n\gs c$. A sequence $x_1,\,\ldots,\,x_t$ is a {\it superficial sequence} of $I$ if $x_i$ is a superficial element of $I/(x_1,\ldots,x_{i-1})$ for every $1\ls i\ls t$. If $I^n$ with $n\gs 1$ is the highest power of $I$ that contains $x$, then we denote by $x^*\in I^n/I^{n+1}=[\G(I)]_n$, the {\it initial form} of $x$. It is a fact that $x$ being a superficial element of $I$ is equivalent to $x^*$ being in $[\G(I)]_1$ and $\ann(x^*)$ being concentrated in finitely many degrees. Superficial elements always exist in our setting of infinite residue field.

The above definitions can be extended to any {\it $I$-good filtration } of $R$, i.e., filtrations $R=I_0\supseteq I_1\supseteq I_2\supseteq \cdots $ such that $I\cdot I_n\subseteq I_{n+1}$ for every $n$ and $I\cdot I_n=I_{n+1}$ for $n\gg 0$ (see \cite[Chapter~1]{RV1}). Of special interest for us is the filtration $\{I^n\fm\}_{n\gs -1}$ where $\fm I^{-1}:=R$, and its {\it associated graded module} $\Z(I)=\bigoplus_{n=0}^{\infty}I^{n-1}\fm/I^{n}\fm$. A superficial element for this filtration is an element $x\in I$ such that there exists $c>0$ such that $(I^{n+1}\fm:x)\cap I^c\fm=I^n\fm$ for every $n\gs c$.
The graded $\G(I)$-module $\Z(I)$ will be useful to relate the depths of  $\F(I)$ and $\G(I)$ through the following proposition. 

If $M=\bigoplus_{n=0}^{\infty} M_n$ is a graded $\G(I)$-module, we will denote by $M_{\gs i} $ the graded submodule $\bigoplus_{n=i}^{\infty} M_n$. In the following proposition all the depths are computed with respect to any ideal of $\G(I)$. 

\begin{prop}\cite[Proposition 5.1]{RV1}\label{ineqdepths} Let $R$ be a Noetherian local ring. Then for every $i$ we have
\begin{enumerate}
\item $\depth \Z(I)_{\gs i} \gs \min\{\depth \G(I)_{\gs i-1}, \depth \F(I)_{\gs i},  \depth \F(I)_{\gs i-1}+1\},$
\item $\depth \F(I)_{\gs i}\gs \min\{\depth \Z(I)_{\gs i},\depth \G(I)_{\gs i-1}+1,\depth \F(I)_{\gs i-1}+2\},$
\item $\depth \G(I)_{\gs i}\gs \min\{\,\depth \Z(I)_{\gs i+1},\depth \F(I)_{\gs i},\depth \F(I)_{\gs i+1}-1\}.$
\end{enumerate}

In particular if $i<0$, we have
\begin{enumerate}
\item[(4)] $\depth \Z(I) \gs \min\{\depth \G(I), \depth \F(I)\},$
\item[(5)] $\depth \F(I)\gs \min\{\depth \Z(I),\depth \G(I)+1\},$
\item[(6)] $\depth \G(I)\gs \min\{\,\depth \Z(I),\depth \F(I)-1\}.$
\end{enumerate}
\end{prop}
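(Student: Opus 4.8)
The plan is to fit $\G(I)$, $\F(I)$ and $\Z(I)$ into two short exact sequences of finitely generated graded $\G(I)$-modules and then apply the three standard forms of the depth lemma to degree truncations of these sequences; the real content is choosing the right pair of sequences. Write $L:=\fm\,\G(I)=\bigoplus_{n\gs 0}\fm I^n/I^{n+1}$, a homogeneous ideal of $\G(I)$. Since $\G(I)/\fm\,\G(I)=\F(I)$, there is the exact sequence
\begin{equation*}
0\lra L\lra \G(I)\lra \F(I)\lra 0. \tag{$\ast$}
\end{equation*}
On the other hand, the inclusions $\fm I^{n}\subseteq I^{n}\subseteq\fm I^{n-1}$ (with $\fm I^{-1}:=R$) give, in degree $n$, the exact sequence $0\to I^{n}/\fm I^{n}\to\fm I^{n-1}/\fm I^{n}\to\fm I^{n-1}/I^{n}\to 0$, which assembles to
\begin{equation*}
0\lra\F(I)\lra\Z(I)\lra L(-1)\lra 0, \tag{$\ast\ast$}
\end{equation*}
where $L(-1)_n=L_{n-1}$. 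First I would check that $(\ast)$ and $(\ast\ast)$ are bona fide exact sequences of graded $\G(I)$-modules with homogeneous degree-zero maps, using that $\Z(I)$ is finitely generated over $\G(I)$ because $\{I^n\fm\}$ is a good $I$-filtration.

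Next I would record the routine facts that, for a homogeneous ideal $\mathfrak a$ of $\G(I)$, the truncation functor $M\mapsto M_{\gs i}$ is exact, that $\depth_{\mathfrak a}$ is invariant under the internal shift $M\mapsto M(j)$, and that $(M(j))_{\gs i}=(M_{\gs i+j})(j)$. The three inequalities then follow by feeding suitable truncations of $(\ast)$ and $(\ast\ast)$ into the depth lemma (recall that for $0\to A\to B\to C\to 0$ one has $\depth B\gs\min\{\depth A,\depth C\}$, $\depth A\gs\min\{\depth B,\depth C+1\}$, and $\depth C\gs\min\{\depth B,\depth A-1\}$). Concretely: from $(\ast)$ truncated at degrees $\gs i-1$, bounding the submodule gives $\depth L_{\gs i-1}\gs\min\{\depth\G(I)_{\gs i-1},\,\depth\F(I)_{\gs i-1}+1\}$; plugging this into the bound for the middle term, resp.\ the submodule, of $(\ast\ast)$ truncated at degrees $\gs i$ yields statement (1), resp.\ statement (2). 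For statement (3), I would truncate $(\ast)$ at degrees $\gs i$ and bound its middle term $\G(I)_{\gs i}$, using that $(\ast\ast)$ truncated at degrees $\gs i+1$ gives, via the quotient bound and the identity $(L(-1))_{\gs i+1}=(L_{\gs i})(-1)$, the estimate $\depth L_{\gs i}\gs\min\{\depth\Z(I)_{\gs i+1},\,\depth\F(I)_{\gs i+1}-1\}$. The special cases (4)--(6) are (1)--(3) with any $i<0$, where $M_{\gs i}=M$ for every module involved and the terms $\depth\F(I)_{\gs i-1}+1$, $\depth\F(I)_{\gs i-1}+2$ become $\depth\F(I)+1$, $\depth\F(I)+2$, which are dominated by $\depth\F(I)$, so the minima collapse to the stated ones.

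The main thing to be careful about --- rather than a genuine obstacle --- is the bookkeeping of shifts: one must track the $(-1)$ twist on $L$ in $(\ast\ast)$ correctly through $(L(-1))_{\gs i}=(L_{\gs i-1})(-1)$, and apply the depth lemma in the precise form that puts the extra $+1$ on the \emph{submodule}'s depth when bounding the middle term and the extra $-1$ on the \emph{submodule}'s depth when bounding the quotient. It is also worth noting that both sequences are genuinely needed: neither $(\ast)$ nor $(\ast\ast)$ by itself yields all three inequalities.
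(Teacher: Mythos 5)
Your proposal is correct and follows essentially the same route as the paper: the paper's proof uses exactly the two short exact sequences $0\to\fm\G(I)\to\G(I)\to\F(I)\to 0$ and $0\to\F(I)_{\gs i}\to\Z(I)_{\gs i}\to(\fm\G(I))_{\gs i-1}\to 0$ (your $(\ast)$ and $(\ast\ast)$, with the $(-1)$ twist written as a degree shift of the truncation) and concludes by the depth lemma. Your bookkeeping of the truncations and shifts is accurate, so nothing further is needed.
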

\begin{proof}
Let $\N=\bigoplus_{n=0}^{\infty}\fm I^n/I^{n+1}$. From the following exact sequences of $R$-modules 
\[
\begin{tikzcd}
0 \arrow{r} &\fm I^n/I^{n+1} \arrow{r} &I^n/I^{n+1} \arrow{r} &I^n/\fm I^{n} \arrow{r} &0,
\end{tikzcd}
\]
\[
\begin{tikzcd}
0 \arrow{r} &I^n/\fm I^{n} \arrow{r} &\fm I^{n-1}/\fm I^{n} \arrow{r} &\fm I^{n-1}/ I^{n} \arrow{r} &0,
\end{tikzcd}
\]
we obtain exact sequences of graded $\G(I)$-modules

\[
\begin{tikzcd}
0 \arrow{r} &\N_{\gs i} \arrow{r} &\G(I)_{\gs i} \arrow{r} &\F(I)_{\gs i} \arrow{r} &0,
\end{tikzcd}
\]
\[
\begin{tikzcd}
0 \arrow{r} &\F(I)_{\gs i} \arrow{r} &\Z(I)_{\gs i} \arrow{r} &\N_{\gs i-1} \arrow{r} &0,
\end{tikzcd}
\]
for every $i$. The conclusion follows by the depth formula.
\end{proof}

Let $\{a_1,a_2,\ldots,a_n\}$ be a set of generator of $I$. A property $P$ holds for {\it $t$ general elements} in $I$, or for short $P$ is {\it general}, if there exists a Zarisky dense open subset $U$ of $\AA^{nt}_k$ such that for every $(\lambda_{ij})\in R^{nt}$ whose image $(\overline{\lambda_{ij}})$ in $k^{nt}$ lies in $U$, the sequence $x_i=\sum_{j=1}^{n}\lambda_{ij}a_j$ for $i=1,2,\ldots, t$ satisfies the property $P$. The importance of general elements lies in the various properties that are satisfied by sequences of general elements. When the residue field $k$ is infinite, no $k$-vector space is equal to the union of finitely many subspaces, then it is always possible to find a general element in an ideal avoiding a finite collection of sub-ideals. The following properties are also general: 

\begin{enumerate}
\item[i)] Minimal set of generators of $I$,
\item[ii)] sequences of $\grade I$ elements in $I$ that are regular sequences of $R$,
\item[iii)] sequences of $\ell(I)$ elements generating a minimal reduction $J$ of $I$ such that $r_J(I)=r(I)$,
\item[iv)] superficial elements for any $I$-good filtration of $R$.
\end{enumerate}
Our main references for the definitions and facts presented above are \cite{SH} and \cite{RV1}.

Let $T=\HH{0}{\fm\G(I)}{\G(I)}\cong\bigoplus_{n=0}^{\infty}\HH{0}{\fm}{I^n/I^{n+1}}$. Since $T$ is a finitely generated $\G(I)$-module, we can find $u\gg 0$ such that $\fm^uT=0$ and hence $T$ is a $\G(I)/\fm^u\G(I)$-module. Let $P_T(n)$ be the Hilbert polynomial of $T$, then the degree of $P_T(n)$ is $\dim T-1\ls d-1$. The normalized coefficient in degree $d-1$ of $P_T(n)$ is denoted by $j(I)$, i.e.,

$$j(I)=(d-1)!\lim_{n\rightarrow \infty}\frac{\ll(\HH{0}{\fm}{I^n/I^{n+1}})}{n^{d-1}}.$$
This invariant is called the {\it $j$-multiplicity} of $I$.

From the following inequalities $$\dim T\ls \dim \G(I)/\fm^u\G(I) =\dim \G(I)/\fm\G(I)=\ell(I),$$ we obtain that $j(I)\neq 0$ can only happen when $I$ has analytic spread $\ell(I)=d$, in fact these two conditions are equivalent (see \cite[2.1]{NU}). It is easy to see that if $I$ is $\fm$-primary then $T=\G(I)$ and hence $j(I)=e(I)$ the {\it Hilbert-Samuel multiplicity} of $I$. The following theorem provides a method to compute the $j$-multiplicity by reducing the ideal $I$ to a suitable $\fm$-primary ideal.  

\begin{prop}\cite[2.1]{PX1}\label{jmultdim1} Let $R$ be a Noetherian local ring with infinite residue field. Let $I$ be an $R$-ideal of analytic spread $\ell(I)=d$ and $x_1,\,\ldots,\, x_d$ general elements in $I$. Let $\overline{R}:=R/(x_1,\,\ldots,x_{d-1}):I^{\infty}$, then $\overline{R}$ is a Cohen-Macaulay ring of dimension one and $\overline{I}$ is $\overline{\fm}$-primary. Furthermore,
$$j(I)=e(\overline{I})=\ll\left(\overline{R}/(\overline{x_d})\right).$$
\end{prop}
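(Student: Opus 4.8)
\emph{Strategy.} The plan is to induct on $d$, cutting one general hyperplane section at a time; here $d=\ell(I)$ and, as the asserted $\dim\overline R=1$ requires, also $d=\dim R$. The bookkeeping rests on the fact that $I$-saturations compose: if $R_1=R/(x_1):_RI^\infty$, then forming $R_1/(\overline{x_2},\ldots,\overline{x_{d-1}}):_{R_1}(IR_1)^\infty$ recovers exactly $\overline R=R/(x_1,\ldots,x_{d-1}):_RI^\infty$, because $a\in(x_1,\ldots,x_i):I^\infty$ iff $I^na\subseteq(x_i)+((x_1,\ldots,x_{i-1}):I^m)$ for some $n,m$; moreover the images in $R_1$ of general elements of $I$ are general in $IR_1$ (a routine fibered-genericity argument). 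So it suffices to handle the base case and one inductive step.

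\emph{Base case $d=1$.} There are no $x_i$ and $\overline R=R/\HH{0}{I}{R}$. Since $I$-saturation is idempotent, $\HH{0}{I}{\overline R}=0$, so no associated prime of $\overline R$ contains $I$; in particular $\overline{\fm}\notin\Ass(\overline R)$ (an annihilator equal to $\overline{\fm}$ would be killed by $I\subseteq\fm$), giving $\depth\overline R\geq1=\dim\overline R$, and $\overline I$ lies in no minimal prime, so $\dim\overline R/\overline I=0$. Thus $\overline R$ is Cohen-Macaulay of dimension one and $\overline I$ is $\overline{\fm}$-primary. For the multiplicity, Artin-Rees and $I^t\HH{0}{I}{R}=0$ force $\HH{0}{I}{R}\cap I^n=0$ for $n\gg0$, so $\overline{I}^{n}/\overline{I}^{n+1}=I^n/I^{n+1}$ for $n\gg0$ and hence $j(I;R)=j(\overline I;\overline R)$; this equals $e(\overline I)$ since $\overline I$ is $\overline{\fm}$-primary. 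Finally $\overline{x_d}$ is general in $\overline I$, so it generates a minimal reduction of $\overline I$ and is a nonzerodivisor of the one-dimensional Cohen-Macaulay ring $\overline R$; therefore $e(\overline I)=e((\overline{x_d}))=\ll(\overline R/(\overline{x_d}))$.

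\emph{Inductive step $d\geq2$.} Let $x_1\in I$ be general and $R_1=R/(x_1):_RI^\infty$. I would first prove $\dim R_1=d-1$, $\ell(IR_1)=d-1$, and $j(I;R)=j(IR_1;R_1)$. The dimension and analytic-spread statements are the standard facts that a general saturated hyperplane section drops both invariants by one when $\ell(I)=\dim R$. For the equality of $j$-multiplicities, write $h_R(n)=\ll(\HH{0}{\fm}{I^n/I^{n+1}})$ and define $h_{R_1}(n)$ similarly; combining the short exact sequences relating $I^n/I^{n+1}$, the image and cokernel of multiplication by $x_1^*\in[\G(I)]_1$, and the graded components of $\G(IR_1)$ — whose error terms (the kernel of $\cdot\,x_1^*$, and the discrepancy produced by $(x_1):I^\infty$) are, for general $x_1$, supported in dimension $\leq d-2$ as graded $\G(I)$-modules — yields $h_{R_1}(n)=h_R(n)-h_R(n-1)+O(n^{d-3})$. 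Passing to leading coefficients and recalling $j(I;R)=(d-1)!\lim_n h_R(n)/n^{d-1}$ and $j(IR_1;R_1)=(d-2)!\lim_n h_{R_1}(n)/n^{d-2}$ gives $j(IR_1;R_1)=j(I;R)$. Now the inductive hypothesis applied to $(R_1,IR_1)$ with the general elements $\overline{x_2},\ldots,\overline{x_d}$ shows, using the composition of saturations, that $\overline R$ is Cohen-Macaulay of dimension one, $\overline I$ is $\overline{\fm}$-primary, and $j(IR_1;R_1)=e(\overline I)=\ll(\overline R/(\overline{x_d}))$; combining with $j(I;R)=j(IR_1;R_1)$ finishes the proof.

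\emph{Main obstacle.} The crux is the identity $j(I;R)=j(IR_1;R_1)$, equivalently the claim that the error modules appearing when $I^n/I^{n+1}$ is cut by the general linear form $x_1^*$ and compared with $\G(IR_1)$ have dimension at most $d-2$, so contribute only $O(n^{d-3})$ to the Hilbert function. This is exactly where genericity of $x_1$ is essential: a general element is a parameter on the relevant $\G(I)$-modules modulo the torsion submodule $T=\HH{0}{\fm\G(I)}{\G(I)}$. The attendant dimension bookkeeping for general hyperplane sections likewise uses $\ell(I)=\dim R$ but is routine by comparison.
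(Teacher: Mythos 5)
A preliminary remark: the paper offers no proof of this proposition --- it is imported wholesale from Polini--Xie as \cite[2.1]{PX1} --- so there is no internal argument to measure yours against; what follows assesses your sketch on its own terms. Your architecture (induct on $d$, cut by one general element at a time, use that $I$-saturations compose, settle $d=1$ by hand) is the standard route, and your base case is essentially complete and correct: $\overline R=R/\HH{0}{I}{R}$ has no associated prime containing $I$, hence $\depth\overline R\gs 1$ and $\dim\overline R/\overline I=0$; Artin--Rees gives $\HH{0}{I}{R}\cap I^n=0$ for $n\gg0$, identifying $j(I)$ with $e(\overline I)$; and a general element of an $\overline\fm$-primary ideal in a one-dimensional Cohen--Macaulay ring is a nonzerodivisor generating a minimal reduction, whence $e(\overline I)=\ll(\overline R/(\overline{x_d}))$.

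The genuine gap is the one you flag yourself: the identity $j(I;R)=j(IR_1;R_1)$ is not an auxiliary bookkeeping step, it is the entire content of the proposition, and you assert rather than prove that all error modules contribute $O(n^{d-3})$ to $\ll(\HH{0}{\fm}{I^n/I^{n+1}})$. Two distinct things must be established there. First, that for general $x_1$ the initial form $x_1^*$ is filter-regular in the strong sense needed: not merely that $(0:_{\G(I)}x_1^*)$ is small, but that multiplication by $x_1^*$ on the torsion module $T=\HH{0}{\fm\G(I)}{\G(I)}$ --- the module whose Hilbert function actually computes $j(I)$ --- has kernel of dimension at most $d-2$; genericity gives a parameter on $\G(I)$ modulo lower-dimensional junk, but one must check this persists on $T$, which lives over $\G(I)/\fm^u\G(I)$. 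Second, the passage from $R/(x_1)$ to $R_1=R/(x_1):I^\infty$ changes the ring and not just the graded module, so one must also bound the dimension of $\bigoplus_n\HH{0}{\fm}{\bigl(I^n\cap((x_1):I^\infty)+I^{n+1}\bigr)/I^{n+1}}$ by $d-2$; this is exactly where the hypothesis $\ell(I)=\dim R$ and the residual behaviour of $(x_1):I^\infty$ for general $x_1$ enter, and it is the superficial-element machinery of Achilles--Manaresi and Nishida--Ulrich that Polini--Xie ultimately rely on. Until these two claims are proved, the proposal is a correct road map with its central bridge missing.
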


Let $S=\bigoplus_{n=0}^{\infty} S_n$ be a Noetherian standard graded algebra with $S_0$ a Noetherian local ring. For a graded $S$-module $M=\bigoplus_{n=0}^{\infty} M_n$, we will denote by $\depth M$ the depth of $M$ with respect to the irrelevant maximal ideal $\mathfrak{M}$ of $S$. We recall that if $\dim M=s$ then the {\it $a$-invariant} of $M$ is defined as $a(M)=\max\{n: [\HH{s}{\mathfrak{M}}{M}]_n\neq 0\}<\infty.$ That is, the top non-vanishing degree of the top non-vanishing local cohomology module of $M$ with respect to $\mathfrak{M}.$ The $a$-invariant of $\G(I)$ can be used to relate the Cohen-Macaulayness of $\G(I)$ and $\R(I)$:

\begin{thm}\cite[7.1]{IT}\label{IT}
Let $R$ be a local Cohen-Macaulay local ring and $I$ and $R$-ideal of positive height. Then $\R(I)$ is Cohen-Macaulay if and only if $\G(I)$ is Cohen-Macaulay and $a(\G(I))<0.$
\end{thm}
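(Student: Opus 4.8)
(This statement is \cite[7.1]{IT}; here is the line of argument one follows.) Write $\R=\R(I)$, $\G=\G(I)$, let $\fM$ be the irrelevant maximal ideal of $\R$, and recall $\dim\R=d+1$ and $\dim\G=d$, where $d=\dim R$ (here $\grade I=\height I>0$, as $R$ is Cohen-Macaulay). All local cohomology is taken with respect to $\fM$. One reformulates: $\R$ is Cohen-Macaulay iff $\HH{i}{\fM}{\R}=0$ for $i\ls d$; $\G$ is Cohen-Macaulay iff $\HH{i}{\fM}{\G}=0$ for $i\ls d-1$; and $a(\G)<0$ iff $[\HH{d}{\fM}{\G}]_n=0$ for all $n\gs 0$. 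The two sides are linked through the fundamental short exact sequences of graded $\R$-modules
\[
0\lra\R_+\lra\R\lra R\lra 0,
\qquad
0\lra\R_+(1)\lra\R\lra\G\lra 0,
\]
where $\R_+=\bigoplus_{n\gs1}I^n$, the map in the second sequence is the degreewise inclusion $I^{n+1}\subseteq I^n$, and $(1)$ is the degree shift. Since $R$ is Cohen-Macaulay, $\HH{i}{\fm}{R}=0$ for $i\neq d$ and $\HH{i}{\fM}{R}$ is concentrated in degree $0$; feeding the long exact sequence of the first sequence into that of the second (and using $\HH{i}{\fM}{\R_+}\cong\HH{i}{\fM}{\R}$ for $i\ls d-1$) yields, in every internal degree $n$, an exact ladder tying together $[\HH{i}{\fM}{\R}]_{n+1}$, $[\HH{i}{\fM}{\R}]_n$, and $[\HH{i}{\fM}{\G}]_n$. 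This ladder is the engine of the proof.

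Assume first that $\R$ is Cohen-Macaulay, so $\HH{i}{\fM}{\R}=0$ for $i\ls d$. The ladder then immediately squeezes $\HH{i}{\fM}{\G}=0$ for $i\ls d-1$, so $\G$ is Cohen-Macaulay. For the $a$-invariant, the long exact sequence of the second short exact sequence in cohomological degree $d$ (using $\HH{d}{\fM}{\R}=0$, $\HH{d+1}{\fM}{\G}=0$, and the first sequence to replace $\R_+$ by $\R$ in nonzero degrees) gives short exact sequences $0\to[\HH{d}{\fM}{\G}]_n\to[\HH{d+1}{\fM}{\R}]_{n+1}\to[\HH{d+1}{\fM}{\R}]_n\to 0$ for $n\gs 0$. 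As $\R$ is a finitely generated standard graded algebra over the local ring $R$, one has $[\HH{d+1}{\fM}{\R}]_n=0$ for $n\gg 0$; descending induction on $n$ now forces $[\HH{d}{\fM}{\G}]_n=0$ for all $n\gs 0$, that is, $a(\G)<0$.

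Conversely, assume $\G$ Cohen-Macaulay and $a(\G)<0$. Running the ladder downward from large $n$: for $i\ls d-1$ all $\G$-terms vanish, so $[\HH{i}{\fM}{\R}]_{n+1}\cong[\HH{i}{\fM}{\R}]_n$ for every $n$, and with $[\HH{i}{\fM}{\R}]_n=0$ for $n\gg 0$ this gives $\HH{i}{\fM}{\R}=0$ for $i\ls d-1$; the same run in cohomological degree $d$ works for $n\gs 0$, where $[\HH{d}{\fM}{\G}]_n=0$, and yields $[\HH{d}{\fM}{\R}]_n=0$ for $n\gs 0$. What is left is to prove $[\HH{d}{\fM}{\R}]_n=0$ for $n<0$, which the ladder cannot reach since $a(\G)<0$ gives no information on $\HH{d}{\fM}{\G}$ in negative degrees. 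One finishes by a finer analysis---for instance via the extended Rees algebra $\R'=R[It,t^{-1}]$, on which $t^{-1}$ is a nonzerodivisor with $\R'/(t^{-1})\cong\G$, so that Cohen-Macaulayness of $\G$ forces that of $\R'$, combined with the exact sequence $0\to\R\to\R'\to\R'/\R\to 0$ of graded $\R$-modules and a computation of $\HH{\bullet}{\fM}{\R'/\R}$ (which is built from $\HH{\bullet}{\fm}{R}$ and hence vanishes in cohomological degree $<d$ since $R$ is Cohen-Macaulay); alternatively one induces on $d$ after cutting down by a general element of $I$.

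The step I expect to be the main obstacle is exactly this negative-degree vanishing of $\HH{d}{\fM}{\R}$ in the implication ``$\G$ Cohen-Macaulay and $a(\G)<0\Rightarrow\R$ Cohen-Macaulay''. The difficulty is twofold: $\fM$ is not the irrelevant maximal ideal of $\R'$ (its radical sits strictly between $\sqrt{\fm\R'}$ and that of the homogeneous maximal ideal of $\R'$), so Cohen-Macaulayness of $\R'$ has to be used with care; and it is precisely here that $a(\G)<0$, rather than mere Cohen-Macaulayness of $\G$, is indispensable---Cohen-Macaulayness of $\G$ alone only forces $\depth\R\gs d$, and it is the positivity of the $a$-invariant that drives $\HH{d}{\fM}{\R}$ to zero.
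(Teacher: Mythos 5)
This statement is quoted from Ikeda--Trung and the paper gives no proof of it, so there is nothing internal to compare your argument against; I can only judge the sketch on its own terms. The setup and the two exact sequences are the right engine (and are indeed the ones used in the original proof), the forward implication is complete and correct, and in the converse the vanishing of $\HH{i}{\fM}{\R}$ for $i\ls d-1$ and of $[\HH{d}{\fM}{\R}]_n$ for $n\gs 0$ is carried out correctly. But the step you yourself flag --- $[\HH{d}{\fM}{\R}]_n=0$ for $n<0$ --- is a genuine gap, not a routine verification: in negative degrees the ladder only yields an injection $[\HH{d}{\fM}{\R}]_{n+1}\hookrightarrow[\HH{d}{\fM}{\R}]_n$ (the cokernel sits inside $[\HH{d}{\fM}{\G}]_n$, which is nonzero for some $n<0$ whenever $\G$ is Cohen--Macaulay of dimension $d$), and this runs in the wrong direction for an induction started at $n\gg 0$; moreover $\HH{d}{\fM}{\R}$ has no a priori vanishing in degrees $\ll 0$, so there is no other end to start from. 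A sketch that stops at ``one finishes by a finer analysis'' has therefore deferred precisely the hard part of the theorem.

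That said, both completions you name are the ones that actually work in the literature. The extended Rees algebra route uses that $\G$ Cohen--Macaulay forces $\R'=R[It,t^{-1}]$ Cohen--Macaulay (as $t^{-1}$ is a regular element with $\R'/(t^{-1})\cong\G$), the sequence $0\lra\R\lra\R'\lra\R'/\R\lra 0$ in which $\R'/\R=\bigoplus_{n<0}Rt^n$ is $\R_+$-torsion, so that $[\HH{i}{\fM}{\R'/\R}]_n\cong\HH{i}{\fm}{R}$ for $n<0$ and vanishes for $i<d$ since $R$ is Cohen--Macaulay; the remaining work, which you correctly anticipate, is to pass from $\HH{i}{\fM'}{\R'}$ (where $\R'$ is Cohen--Macaulay and $a(\R')<0$, the latter being where $a(\G)<0$ enters) to $\HH{i}{\fM\R'}{\R'}$, since $\fM\R'$ and $\fM'$ do not have the same radical. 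This is exactly the technical heart of Trung--Ikeda's argument, so your sketch is faithful to the known proof in outline while leaving its core unexecuted. Two small corrections: in your closing sentence ``positivity of the $a$-invariant'' should read ``negativity''; and note that $a(\G)<0$ has in fact already been consumed in the nonnegative-degree range, so what drives the negative-degree vanishing is rather the Cohen--Macaulayness of $R$ (through $\R'/\R$) together with $a(\R')<0$.
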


When $I$ is $\fm$-primary and $\G(I)$ is Cohen-Macaulay, a general minimal reduction of $I$ is generated by a regular sequence of $R$ whose images in $\G(I)$ also form a regular sequence. Therefore, $a(\G(I))=\max\{n: I^n\neq I^{n+1}+JI^{n-1}\}-d=r(I)-d$. A similar formula for ideals of analytic spread $\ell(I)=s$ that satisfy condition $G_s$ (see Section 3) is given in the following theorem. 

\begin{thm}\cite[3.5]{SUV1}\label{SUV1}
Let $R$ be a Noetherian local ring with infinite residue field. Let $I$ be an $R$-ideal of height $g$, analytic spread $\ell(I)=s$, and reduction number $r$. Assume $I$ satisfies $G_s$ and $\G(I)$ is Cohen-Macaulay, then $$a(\G(I))=\max\{-g,\,r-s\}.$$  
\end{thm}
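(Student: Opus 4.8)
The plan is to follow the standard strategy for computing $a$-invariants of associated graded rings via induction on the analytic spread, using general hyperplane sections to reduce to a base case. First I would pass to a minimal reduction $J=(x_1,\ldots,x_s)$ generated by general elements with $r_J(I)=r$, so that by the generality hypotheses these $x_i$ form a superficial sequence for $I$ and, after localizing at the relevant primes, a regular sequence on $R$ where possible. The key structural input is condition $G_s$: it guarantees that for every prime $\fp$ with $\dim R_\fp \leq s-1$ the ideal $I_\fp$ is generated by at most $\dim R_\fp$ elements, hence is a complete intersection locally in small dimensions; combined with the Cohen-Macaulayness of $\G(I)$ this forces the initial forms $x_i^*$ to behave like a regular sequence on $\G(I)$ up to the relevant level. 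Concretely, I would show that $x_1^*,\ldots,x_{s}^*$ is a $\G(I)$-regular sequence, using that $\depth \G(I)=\dim\G(I)=d\geq s$ and that general superficial elements for a Cohen-Macaulay $\G(I)$ give regular sequences on $\G(I)$ (the filtration-quotient argument: $x_1^*$ is $\G(I)$-regular because $\ann(x_1^*)$ is concentrated in finitely many degrees and $\G(I)$ has positive depth, and then one iterates using that $\G(I/(x_1))\cong \G(I)/(x_1^*)$ under these hypotheses).

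Once the regular sequence is in hand, I would use the fact that local cohomology commutes with killing a regular element up to a degree shift: $[\HH{t}{\fM}{\G(I)}]_n$ relates to $[\HH{t-1}{\fM}{\G(I)/(x_1^*)}]_{n-1}$, so $a(\G(I)) = a(\G(I)/(x_1^*,\ldots,x_s^*)) - s$ (each of the $s$ elements sits in degree $1$). This reduces the problem to computing the $a$-invariant of the Artinian graded ring $\G(I)/(x_1^*,\ldots,x_s^*)$, which is $\max\{n : [\G(I)/(x_1^*,\ldots,x_s^*)]_n \neq 0\}$. Here the reduction equation $I^{r+1}=JI^r$ enters: it gives $I^n \subseteq JI^{n-1}+I^{n+1}$ for $n\geq r+1$, so $[\G(I)/(x_1^*,\ldots,x_s^*)]_n = I^n/(JI^{n-1}+I^{n+1}) = 0$ for $n > r$; hence this top degree is $\leq r$, giving $a(\G(I)) \leq r-s$. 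The lower bound $a(\G(I)) \geq \max\{-g, r-s\}$ splits into two pieces: the $r-s$ bound comes from showing the reduction number cannot be "absorbed," i.e., $I^r \neq JI^{r-1}$ persists after killing the $x_i^*$ (this uses minimality of $r=r_J(I)$ together with the fact that the $x_i^*$ form a regular sequence, so no spurious collapse occurs); the $-g$ bound is the statement $a(\G(I))\geq -g$, which holds because $\G(I)$ has a nonzero component in degree $0$ (namely $R/I$) that survives, reflecting $\height I = g$ — more precisely one localizes at a minimal prime of $I$ of height $g$, where $I$ becomes a parameter ideal and the classical $\fm$-primary formula $a = r - (\text{number of variables}) \geq -g$ applies, and semicontinuity propagates it.

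The main obstacle I anticipate is justifying that the initial forms $x_1^*,\ldots,x_s^*$ genuinely form a regular sequence on $\G(I)$ under only $G_s$ plus Cohen-Macaulayness — the subtlety is that $x_i$ is not in general a nonzerodivisor on $R$ when $s > g$, so $\G(I/(x_1,\ldots,x_i))$ need not equal $\G(I)/(x_1^*,\ldots,x_i^*)$ on the nose, and one must instead argue at the level of local cohomology degrees directly, or pass through the "Valabrega–Valla" criterion $J\cap I^n = JI^{n-1}$ which under $G_s$ and $\depth\G(I)=d$ is available. Handling the precise bookkeeping of which $J\cap I^n = JI^{n-1}$ equalities hold, and confirming they are exactly what is needed to get both the upper bound $r-s$ and that it is not beaten except by the "trivial" bound $-g$ coming from the height, is where the real work lies; everything else is the formal degree-shift mechanics of Rees-style local cohomology computations.
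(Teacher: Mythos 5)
Your central device fails in exactly the range the theorem is about. You propose to show that the initial forms $x_1^*,\ldots,x_s^*$ of a general minimal reduction form a $\G(I)$-regular sequence and then read off $a(\G(I))=a\bigl(\G(I)/(x_1^*,\ldots,x_s^*)\bigr)-s\ls r-s$. When $s>g$ this is impossible: since $I^n=JI^{n-1}$ for $n>r$, the quotient $\G(I)/(x_1^*,\ldots,x_s^*)\G(I)$, whose degree-$n$ piece is $I^n/(JI^{n-1}+I^{n+1})$, is concentrated in degrees $0,\ldots,r$ and hence is a finitely generated module over $[\G(I)]_0=R/I$ with degree-zero component $R/I$ itself; its dimension is therefore $\dim R/I$ (equal to $d-g$ in the Cohen--Macaulay settings where the theorem is used), not $d-s$, so no length-$s$ regular sequence on the $d$-dimensional Cohen--Macaulay ring $\G(I)$ can cut it out. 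In particular the quotient is not Artinian, so its $a$-invariant is not its top nonvanishing degree either. Worse, the inequality $a(\G(I))\ls r-s$ that your mechanism would deliver contradicts the formula being proved whenever $r<s-g$ --- the maximum with $-g$ is in the statement precisely because this happens; for instance $I=(xy,yz,zx)\subseteq k[[x,y,z]]$ is a strongly Cohen--Macaulay ideal satisfying $G_3$ with $g=2$, $s=\mu(I)=3$, $r=0$, where the theorem gives $a(\G(I))=-2$ while your bound would force $a(\G(I))\ls -3$. The repairs you sketch do not close this: the Valabrega--Valla criterion presupposes that $x_1,\ldots,x_i$ is an $R$-regular sequence, which is unavailable beyond $\grade I$ elements since they all lie in $I$; and general elements only give filter-regular (superficial) behavior on $\G(I)$, which perturbs local cohomology in finitely many degrees rather than producing the clean shift $a\mapsto a+1$.

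The lower bound $a(\G(I))\gs -g$ is also not established by your argument: localizing at a height-$g$ minimal prime of $I$ changes both the ring dimension and the cohomological degree in which the top local cohomology lives, and there is no ``semicontinuity'' giving $a(\G(I))\gs a(\G(I_\fp))$ for free. Note that the paper does not prove this statement --- it is quoted from Simis--Ulrich--Vasconcelos --- and the known arguments exploit $G_s$ through the colon ideals $(x_1,\ldots,x_i):I$ and residual-intersection structure, doing the local cohomology bookkeeping on auxiliary modules rather than pretending the $x_i^*$ are regular; compare Lemma \ref{boundrn} of this paper, where the module $\C=\bigoplus_{n\gs 0}I^n/J^n$ and the two short exact sequences linking $\G(J)$, $\G(I)$, and $\C$ play exactly the role your regular-sequence shortcut was meant to play. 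Your approach as written proves the theorem only in the equimultiple case $s=g$, where it is classical.
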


In \cite{PX1}, the authors defined an $R$-ideal $I$ of analytic spread $\ell(I)=d$ to be of {\it minimal $j$-multiplicity} if the ideal $\overline{I}$ in Proposition \ref{jmultdim1} is of minimal multiplicity, i.e., $\overline{I}^2=\overline{x_d I}$. They proved that this property is well-defined, i.e., that for general elements $x_1,x_2,\ldots,x_d$ the length $\ll(\overline{I}^2/\overline{x_d I})$ is the same. We will follow this approach to define the notion of Goto-minimal $j$-multiplicity. We first need the following lemma, the prove is similar to \cite[2.3]{PX1}.

\begin{lemma}\label{welldefgoto}
Let $R$ be a Noetherian local ring of dimension $d$, with maximal ideal $\fm$, and infinite residue field. Let $I$ be an $R$-ideal of analytic spread $\ell(I)=d$ and $x_1,\ldots, x_d$ general elements  of $I$. Let $\overline{R}=R/(x_1,\,\ldots,x_{d-1}):I^{\infty}$, then the length $\ell(\overline{ I\fm}/\overline{x_d \fm})$ does not depend on $x_1,\,\ldots,x_d$.
\end{lemma}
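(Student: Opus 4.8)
The plan is to mimic the argument for minimal $j$-multiplicity given in \cite[2.3]{PX1}, but to track the filtration $\{I^n\fm\}$ instead of $\{I^n\}$; the key point is that passing to $\overline R$ only involves the first $d-1$ general elements, while $x_d$ enters only as a superficial element for the resulting one-dimensional situation. First I would fix a minimal generating set $a_1,\dots,a_n$ of $I$ and consider general linear combinations $x_i=\sum_j\lambda_{ij}a_j$. By Proposition \ref{jmultdim1}, for general $x_1,\dots,x_{d-1}$ the ring $\overline R=R/((x_1,\dots,x_{d-1}):I^\infty)$ is one-dimensional Cohen–Macaulay with $\overline I$ being $\overline\fm$-primary, and $\overline{x_d}$ is a minimal reduction of $\overline I$; moreover we may assume $x_d$ is superficial for the $\overline I$-adic filtration and for the filtration $\{\overline I^{\,n}\overline\fm\}$. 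In this one-dimensional Cohen–Macaulay setting, $\overline{x_d}$ is a nonzerodivisor, so $\ell(\overline{I\fm}/\overline{x_d\fm})=\ell(\overline I\,\overline\fm/\overline{x_d}\,\overline\fm)$ is a well-defined finite length, and for a superficial element it stabilizes: $\ell(\overline{I\fm}/\overline{x_d\fm})=\ell(\overline{I^{\,n}\overline\fm}/\overline{x_d}\,\overline{I^{\,n-1}\overline\fm})$ for $n\gg 0$, hence equals a coefficient of the Hilbert polynomial of the module $\overline{\Z(I)}$ over $\overline{\G(I)}$ — in particular an invariant that does not see the particular choice of $x_d$ among general elements.

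The remaining issue is independence from $x_1,\dots,x_{d-1}$. Here I would run the standard "generic flattening / Zariski-open set" argument: the construction $(x_1,\dots,x_{d-1})\mapsto \overline R$ and the length $\ell(\overline{I\fm}/\overline{x_d\fm})$ are all governed by the behavior of finitely many graded pieces of the modules $\R(I)$, $\G(I)$, $\Z(I)$, and $R/\fm$, together with the saturation $(x_1,\dots,x_{d-1}):I^\infty$, which for general choices is the intersection of those minimal primes of $(x_1,\dots,x_{d-1})$ not containing $I$ — a locus cut out by open conditions on the coefficients $(\lambda_{ij})$ in $\AA^{n(d-1)}_k$. Since $k$ is infinite, the nonempty intersection of these finitely many Zariski-dense open sets is again dense, so any two general $(d-1)$-tuples can be connected inside a single such open set, and on that set the length is locally constant; being integer-valued it is constant. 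Concretely, one fixes $N\gg 0$ (larger than the reduction numbers of $I$ and of the filtration $\{I^n\fm\}$ and than the stabilization bound for $x_d$-superficiality), observes that $\ell(\overline{I\fm}/\overline{x_d\fm})$ is computed by a fixed finite-dimensional linear-algebra datum over $R/\fm^{N}$ that depends polynomially on the $\lambda_{ij}$, and hence is generically constant.

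The main obstacle I expect is the bookkeeping in the second paragraph: one must check that the "general" hypotheses needed — $x_1,\dots,x_{d-1}$ general so that the saturation is the expected one and $\overline R$ is Cohen–Macaulay of dimension one, $x_d$ general so that it is a minimal reduction \emph{and} superficial for \emph{both} filtrations simultaneously, and the reduction numbers $r_J(I)$, $r$ of the $\fm$-filtration are attained — can all be imposed by a single nonempty Zariski-open condition, and that on that open set the passage to $\overline R$ commutes with the relevant graded pieces uniformly in the parameters. Everything else is the routine observation that for a superficial element in a one-dimensional Cohen–Macaulay ring the length in question is eventually constant and therefore an invariant of the filtration, paralleling \cite[2.3]{PX1}; I would simply cite that proof for the parts that go through verbatim with $\{I^n\}$ replaced by $\{I^n\fm\}$.
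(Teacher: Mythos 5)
Your reduction to the one-dimensional ring $\overline R$ of Proposition \ref{jmultdim1} is the right starting point, but both halves of your argument have real gaps. For the independence from $x_d$: the claimed stabilization $\ll(\overline{I\fm}/\overline{x_d\fm})=\ll(\overline{I^{n}\fm}/\overline{x_d I^{n-1}\fm})$ for $n\gg 0$ is false, since $(\overline{x_d})$ is a reduction of $\overline I$ and hence $\overline{I^{n}\fm}=\overline{x_d I^{n-1}\fm}$ for $n\gg 0$; the right-hand side is eventually $0$, while the left-hand side is exactly the quantity being measured. The length $\ll(\overline{I\fm}/\overline{x_d\fm})$ is not a Hilbert coefficient of $\Z(\overline I)$; what actually pins it down is Goto's one-dimensional formula \cite[2.1]{Go1}, $e(\overline I)=\mu(\overline I)-1+\ll(\overline R/\overline I)+\ll(\overline{I\fm}/\overline{x_d\fm})$, which expresses it through $e(\overline I)=j(I)$, $\mu(\overline I)$, and $\ll(\overline R/\overline I)$. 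This formula, not superficiality, is the engine of the paper's proof.

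The more serious gap is the independence from $x_1,\ldots,x_{d-1}$. Asserting that the length is ``computed by a finite linear-algebra datum depending polynomially on the $\lambda_{ij}$, hence generically constant'' is not a proof: lengths of cokernels of parameter-dependent maps are only semicontinuous, the saturation $(x_1,\ldots,x_{d-1}):I^{\infty}$ is not visibly captured by bounded-degree data, and, most importantly, the length depends on the lifts $\lambda_{ij}\in R$ and not merely on their residues in $k$, whereas the open condition in the statement lives on $\AA^{dn}_k$; so a constructibility argument on the parameter space does not even apply directly. The paper supplies precisely the missing mechanism: pass to the faithfully flat generic extension $S=R[\{Y_{ij}\}]_{\fm R[\{Y_{ij}\}]}$ with $X_i=\sum_j Y_{ij}a_j$, use Proposition \ref{jmultdim1} together with \cite[2.1]{Go1} to write $j(I)=\ll(\overline{IS}/\overline{I\fm S})-1+\ll(\overline S/\overline{IS})+\ll(\overline{I\fm S}/\overline{X_d\fm S})$ and the analogous identity for the specialized data on a dense open set of residues, and invoke \cite[2.2]{PX1} to see that each of the three specialized lengths is bounded above by its generic counterpart; since both sums equal $j(I)$, every inequality is an equality, so $\ll(\overline{I\fm}/\overline{x_d\fm})=\ll(\overline{I\fm S}/\overline{X_d\fm S})$, a quantity independent of the choice. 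To salvage your outline you must replace the ``locally constant'' step by this semicontinuity-plus-conserved-sum argument; citing \cite[2.3]{PX1} verbatim does not cover it, because the term $\ll(\overline{I\fm}/\overline{x_d\fm})$ enters only through Goto's formula and is not among the lengths treated there.
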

 
\begin{proof}
Let $a_1,a_2,\ldots, a_n$ be a set of generators of the ideal $I$, $\{Y_{ij}\}$ a set of $d\times n$ variables, and $S=R[\{Y_{ij}\}]_{\fm R[\{Y_{ij}\}]}$ a faithfully flat extension of $R$.  Set $X_i=\sum_{j=1}^{n}Y_{ij}a_j$ for $i=1,\ldots, d$ and $\overline{S}=S/(X_1,\ldots,X_{d-1}):(IS)^{\infty}$. By Proposition \ref{jmultdim1} and \cite[2.1]{Go1} we have
$$j(I)=j(IS)=e(\overline{IS})\\
          = \ll(\overline{IS}/\overline{I\fm S})-1+\ll(\overline{S}/\overline{IS})+\ll(\overline{I\fm S}/\overline{X_d \fm S}).$$ 
For $\mathfrak{u}=(u_{ij})\in R^{dn}$, let $\phi_{\mathfrak{u}}:S\rightarrow R$ be the ring homomorphism that sends $Y_{ij}$ to $u_{ij}$. Write $x_{i}=\phi_{\mathfrak{u}}(X_{i})=\sum_{j=1}^{n}u_{ij}a_j$ for $i=1,\ldots, d$, then $\phi_{\mathfrak{u}}((X_1,\ldots,X_{d-1}):(IS)^{\infty})\subseteq (x_1,\ldots,x_{d-1}):I^{\infty}.$ By \cite[2.2]{PX1}, there exists an open dense subset $U_1$ of $k^{dn}$ such that if $\overline{\mathfrak{u}}\in U_1$ then 

\begin{align*}
\ll(\overline{IS}/\overline{ I\fm S}) &= \ll(IS/ I\fm  S+(X_1,\ldots,X_{d-1}):_{IS}(IS)^{\infty})\\ 
 & \gs \ll(\phi_{\mathfrak{u}}(IS)/ \phi_{\mathfrak{u}}( I\fm S+(X_1,\ldots,X_{d-1}):_{IS}(IS)^{\infty}))\\
 &=\ll(I/ I\fm +\phi_{\mathfrak{u}}((X_1,\ldots,X_{d-1}):_{IS}(IS)^{\infty}))\\
 &\gs \ll(I/ I\fm+ (x_1,\ldots,x_{d-1}):_I I^{\infty})\\
 &= \ll(\overline{I}/\overline{ I\fm}),
 \end{align*}
where $\overline{R}=R/ (x_1,\ldots,x_{d-1}):I^{\infty}$. Also,
 $
 \ll(\overline{S}/\overline{IS}) \gs \ll(\overline{R}/\overline{I})$ and $\ll(\overline{ I\fm S}/\overline{X_d\fm S}) \gs \ll(\overline{I\fm }/\overline{x_d\fm}).$ Again by Proposition \ref{jmultdim1} and \cite[2.1]{Go1}, there exists an open dense subset $U_2$ of $k^{dn}$, such that if $\overline{\mathfrak{u}}\in U_2$ then $$j(I)=\ll(\overline{I}/\overline{I\fm })-1+\ll(\overline{R}/\overline{I})+\ll(\overline{I\fm }/\overline{x_d I}).$$
 Let $U=U_1\cap U_2$, then if $\overline{\mathfrak{u}}\in U$ we have $\ll(\overline{ I\fm}/\overline{x_d\fm})=\ll(\overline{ I\fm S}/\overline{X_d \fm S})$ which is independent of $\mathfrak{u}.$
\end{proof}

Let $I$ be an ideal of analytic spread $\ell(I)=d$. We define $I$ to be of {\it Goto-minimal $j$-multiplicity} if $\overline{I\fm}=\overline{x_d\fm}$ for general elements $x_1,\ldots,x_d$ in $I$. By Lemma \ref{welldefgoto} this is well defined.

\section{Artin-Nagata Properties}

In this section we will define and present some properties of the residual conditions that will be needed in the subsequent sections. It turns out that the minimal multiplicity properties can be stated in a simpler form under these conditions.

Let $I$ be an $R$-ideal of grade $g$ and let $s\gs g$ be an integer. We say $I$ satisfies $G_s$ if $\mu(I_{\fp})\ls \height \fp$ for every $\fp\in V(I)$ such that $\height \fp < s$. The proper $R$-ideal $K$ is an {\it $s$-residual intersection of $I$} if there exist $s$ elements $x_1\,\,\ldots, \, x_s$ of $I$ such that $K=(x_1\,\ldots,\,x_s):I$ and $\height K \gs s$. We say $K$ is a {\it geometric $s$-residual intersection of $I$} if additionally we have $\height I+K> s.$ The ideal $I$ is said to satisfy the Artin-Nagata property $AN^{-}_{s}$ if the ring $R/K$ is Cohen-Macaulay for every $0\ls i\ls s$ and for every  geometric $i$-residual intersection $K$ of $I$.

\begin{prop}\label{resprop}
Let $R$ be a local Cohen-Macaulay ring of dimension $d$ and with infinite residue field. Let $J\subseteq I$ be $R$-ideals such that $\mu(J)\ls s\ls \height J:I$ for some integer $s\ls d$. Assume $I$ satisfies $G_s$, then
\begin{enumerate}
\item There exists a generating set $x_1,\,\ldots,x_s$ of $J$ such that for every $0\ls i\ls s-1$ and every subset $\{v_1,\ldots, v_i\}$ of $\{1,\ldots,s\}$, we have $\height (x_{v_1},\ldots, x_{v_i}):I\gs i$ and $\height I + (x_{v_1},\ldots, x_{v_i}):I \gs i+1$
\end{enumerate}
\noindent Furthermore, assume $I$ satisfies $AN_{s-2}^{-}$, write $K_i=(x_1,\ldots, x_i):I$, then
\begin{enumerate}
\item[(2)] $R/K_i$ is unmixed of dimension $d-i$ and $x_{i+1}$ is regular in $R/K_i$ for $0\ls i\ls s-1$. 
\item[(3)] $K_i\cap I=(x_1,\,\ldots,x_i)$ and $K_{i}=(x_1,\,\ldots,x_i):x_{i+1}$ for $0\ls i\ls s-1$.
\item[(4)] $\depth R/(x_1,\,\ldots,x_i)=d-i$ and $\depth R/(x_1,\,\ldots,x_i)I \gs \min\{\depth R/I,1\}$ for $0\ls i\ls s-1$.
\item[(5)] $I$ also satisfies $G_s$ and $AN_{s-2}^{-}$ on $R/K_0$.
\item[(6)] If $d\gs s\gs 2$ and $\depth R/I^n\gs d-s+1$, for some $n\gs 1$, then $(x_1) \cap I^{n+1}=x_1I^n$. 
\end{enumerate}
\end{prop}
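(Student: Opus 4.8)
The plan is to treat (1)--(5) by the standard machinery of (geometric) residual intersections and to spend the real effort on (6).

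For (1) I would use a general-position argument: since $k$ is infinite, a sufficiently general element of $I$ avoids any prescribed finite set of primes not containing $I$, so one builds a generating set $x_1,\ldots,x_s$ of $J$ one element at a time, at the $i$-th stage taking $x_{i+1}$ general enough to miss the minimal primes of height $\ls i$ of $I$ and of each ideal $(x_{v_1},\ldots,x_{v_i})$, $\{v_1,\ldots,v_i\}\subseteq\{1,\ldots,i\}$, that do not contain $I$. That this forces $\height\big((x_{v_1},\ldots,x_{v_i}):I\big)\gs i$ and makes the resulting residual intersections geometric is where $G_s$ (which gives $\mu(I_{\fp})\ls\height\fp$ for $\fp\in V(I)$ with $\height\fp<s$) and $\mu(J)\ls s\ls\height(J:I)$ enter; I would reproduce the standard argument for $G_s$-ideals verbatim.

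For (2)--(5) I would induct on $i$. For $0\ls i\ls s-2$, (1) exhibits $K_i=(x_1,\ldots,x_i):I$ as a geometric $i$-residual intersection of $I$, so $AN^{-}_{s-2}$ makes $R/K_i$ Cohen--Macaulay, hence unmixed, of dimension $d-i$ (the latter since $\height K_i\gs i$, while localizing at a minimal prime of $K_i$ and using $G_s$ gives $\height K_i\ls i$). Geometricity forces no minimal prime of $R/K_i$ to contain $I$, so a general $x_{i+1}\in I$ is a nonzerodivisor on $R/K_i$; this is (2). The identities $K_i\cap I=(x_1,\ldots,x_i)$ and $K_i=(x_1,\ldots,x_i):x_{i+1}$ in (3) are the familiar ones for geometric residual intersections, proved by localizing both sides at their associated primes, where $G_s$ makes $I$ locally generated by part of a regular sequence and the equalities are transparent. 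For (4), the equalities $\depth R/(x_1,\ldots,x_i)=d-i$ follow by induction on $i$ from the exact sequence $0\to R/K_{i-1}\xrightarrow{\ x_i\ }R/(x_1,\ldots,x_{i-1})\to R/(x_1,\ldots,x_i)\to0$ together with (2), (3) and the depth lemma, and the bound $\depth R/(x_1,\ldots,x_i)I\gs\min\{\depth R/I,1\}$ comes from chasing the exact sequences relating $R/(x_1,\ldots,x_i)I$, $R/(x_1,\ldots,x_i)$ and $R/I$, using $(x_1,\ldots,x_i)\cap I=(x_1,\ldots,x_i)$ and that the first syzygies of $x_1,\ldots,x_i$ lie in $IR^i$. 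Finally (5) holds because every (geometric) residual intersection of $I$ over $R/K_0$ lifts to one over $R$ (the link $K_0=0:I$ sitting inside every link of $I$), so Cohen--Macaulayness is inherited and $G_s$ descends to $R/K_0$.

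The substance of the proposition is (6). Fix $n\gs1$ with $\depth R/I^n\gs d-s+1$. As $\grade I>0$, a general $x_1\in I$ is a nonzerodivisor on $R$, so $(x_1)\cap I^{n+1}=x_1\,(I^{n+1}:x_1)$ and it suffices to show that the $R$-submodule $N:=(I^{n+1}:x_1)/I^n$ of $R/I^n$ is zero. Since $\Ass N\subseteq\Ass(R/I^n)$ is a finite subset of $V(I)$, and $\depth R/I^n\gs d-s+1$ forces every $\fp\in\Ass(R/I^n)$ to have $\dim R/\fp\gs d-s+1$, i.e. $\height\fp\ls s-1$, it is enough to check $N_{\fp}=0$ for each of these finitely many primes $\fp$, and I would choose $x_1$ general relative to that finite set. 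If $\fp$ is minimal over $I$, then $I_{\fp}$ is $\fp R_{\fp}$-primary with $\mu(I_{\fp})\ls\height\fp=\height I_{\fp}$ by $G_s$, so $I_{\fp}$ is a complete intersection, $\G(I_{\fp})$ is a polynomial ring over $R_{\fp}/I_{\fp}$, and the initial form of $x_1$ there is a general, hence regular, linear form, whence $(I_{\fp}^{\,n+1}:x_1)=I_{\fp}^{\,n}$ and $N_{\fp}=0$. If $\fp$ is embedded in $R/I^n$, one localizes (1)--(5) at $\fp$ — where $I_{\fp}$ satisfies $G_{\infty}$ and the localization of $AN^{-}_{s-2}$ — and deduces the same colon identity, e.g. from the Cohen--Macaulayness of $\G(I_{\fp})$ or from $I_\fp$ being of linear type. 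Then $N$ has no associated primes, so $N=0$, which is (6).

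The main obstacle is precisely this embedded case: the minimal-prime computation is elementary, but for an embedded $\fp\in\Ass(R/I^n)$ of height $\ls s-1$ one must genuinely exploit the localized residual condition to pin down $I_{\fp}$. A cleaner route I would try first is to reduce (6), using (2)--(5), to the case $s=d$ and then pass modulo the link $K'=(x_2,\ldots,x_{d-1}):I$, which by $AN^{-}_{d-2}$ is a Cohen--Macaulay quotient of dimension $2$ on which $x_1$ remains a nonzerodivisor; here the colon relation is easy to check directly. The price is that one must then control how $\depth R/I^n$ and the colon $(I^{n+1}:x_1)$ behave modulo $K'$ — in particular verifying that $K'$ does not cut into that colon, which is again a statement of the same flavour as (6) — so the induction has to be set up carefully to avoid circularity.
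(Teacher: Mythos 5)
Your treatment of (1)--(5) reproduces the standard residual--intersection arguments; the paper simply cites \cite[2.3, 2.4]{JU} and \cite[3.2 (d)]{PX1} for these parts, so there is nothing substantive to compare. For (6) your overall strategy is the same as the paper's: use the depth hypothesis to reduce the equality to a local check at finitely many primes of height at most $s-1$, where (1) forces $I_{\fp}=R_{\fp}$ or $I_{\fp}=(x_1,\ldots,x_h)_{\fp}$. Two points in your (6), however, are genuine gaps.

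First, you open with ``as $\grade I>0$, a general $x_1$ is a nonzerodivisor on $R$,'' but positive grade is not a hypothesis of the proposition (and elsewhere in the paper the grade-zero case is explicitly allowed and handled by factoring out $\ann I$). If $\grade I=0$ every element of $I$ is a zerodivisor on $R$, so the identity $(x_1)\cap I^{n+1}=x_1(I^{n+1}:x_1)$ and your entire reduction to $N=(I^{n+1}:x_1)/I^n\subseteq R/I^n$ break down. The paper avoids this by using only that $x_1$ is regular on the \emph{module} $I$, which follows from (3) with $i=0$ since $(0:x_1)\cap I=K_0\cap I=0$; then $x_1I^n\cong I^n$ gives $\depth R/x_1I^n\geq d-s+1$, and one localizes the inclusion $x_1I^n\subseteq (x_1)\cap I^{n+1}$ at $\Ass(R/x_1I^n)$, whose members again have height at most $s-1$. (Your version is salvageable by first passing to $R/\ann I$ via (2), (3), (5), but that step must be said.) Second, the embedded-prime case you flag as ``the main obstacle'' is indeed left open, and it is the only place where real work remains: at such a $\fp$ of height $h\leq s-1$ one only gets $I_{\fp}=(x_1,\ldots,x_h)_{\fp}$ with $\mu(I_{\fp})\leq h$, which may exceed $\height I_{\fp}$, so $I_{\fp}$ need not be a complete intersection and the polynomial-ring/initial-form argument does not apply. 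What closes this case --- and what the paper invokes by deferring to the end of the proof of \cite[4.9]{PX1} --- is that the localized conditions ($G_\infty$ together with the colon identities of (3)) make $x_1,\ldots,x_h$ a $d$-sequence generating $I_{\fp}$, after which the standard results on ideals generated by $d$-sequences yield $(x_1)_{\fp}\cap I^{n+1}_{\fp}=x_1I^n_{\fp}$. You name the right tools (linear type, $d$-sequences) but do not carry the argument out, and your alternative route through the link $(x_2,\ldots,x_{d-1}):I$ has exactly the circularity you yourself point out; it is not the path the paper takes.
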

\begin{proof}
For (1)-(5) see \cite[2.3, 2.4]{JU} and \cite[3.2 (d)]{PX1}. To show (6) notice that by (3), $x_1$ is regular on $I$, therefore $x_1I^n\cong I^n$. Hence $\depth x_1I^n=\depth I^n\gs d-s+2$ and then $\depth R/x_1I^n\gs d-s+1$. The equality will follow once we prove it locally in $\Ass(R/x_1I^n)$, but $\height \fp\ls s-1$ for every $\fp\in \Ass(R/x_1I^n)$ and then by (1) either $I_{\fp}=R_{\fp}$ or $I_{\fp}=(x_1,\ldots,x_{s-1})_{\fp}.$ In the first case the equality holds trivially, for the second case we can proceed as in the end of proof of \cite[4.9]{PX1} to show the equality.
\end{proof} 

\begin{remark}\cite[1.11]{U1}\label{reducht}
Under the assumptions in Proposition \ref{resprop}, if $I$ is an ideal of analytic spread $\ell(I)=s$ satisfying $G_s$ and $AN_{s-2}^-$, the condition $\mu(J)\ls s\ls \height J:I$ is satisfied for every minimal reduction $J$ of $I$. To verify this, it is enough to show $I_{\fp}$ is its own minimal reduction, i.e., $\mu(I_{\fp})\ls \ell(I_{\fp})$, for every $\fp\in V(I)$ with $\height \fp<s$. We show this by induction on $h=\height \fp$. If $h=\height I$, the claim follows from the $G_s$ condition. Assume $\height I<h<s$ and that the claim is true for $h-1$. Let $\fp$ be of height $h$, we can assume $\ell(I_\fp)\ls h-1$. Let $(x_1,\ldots, x_{h-1})_\fp$ be a minimal reduction of $I_\fp$. Suppose $(x_1,\ldots, x_{h-1})_\fp\neq I_\fp$, by induction hypothesis $(x_1,\ldots, x_{h-1})_\fp:I_\fp =h$. By \cite[1.10]{U1} $I_p$ satisfies $AN_{h-1}^-$, then applying Proposition \ref{resprop}, (2) with $s=h$, we conclude $\height (x_1,\ldots, x_{h-1})_\fp:I_\fp=h-1$. We conclude by contradiction that $(x_1,\ldots, x_{h-1})_\fp=I_\fp$. 
\end{remark}

\begin{prop}\cite[3.1]{PX1}\label{genresprop}
Let $R$ be a local Cohen-Macaulay ring with infinite residue field and let $s\ls d$ be an integer. Let $I$ be an $R$-ideal that satisfy $G_s$. Then for general $x_1,\ldots x_s$ in $I$ we have $\height (x_1,\ldots x_s):I\gs s$ and these generators satisfy property \textnormal{(1)} in Proposition \ref{resprop}. 
\end{prop}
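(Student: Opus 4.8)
The plan is to produce the elements $x_1,\dots,x_s$ one at a time and to prove, by induction on $i$ for $0\ls i\ls s-1$, the assertion
\[
(\star_i):\qquad (x_1,\dots,x_i)_{\fp}=I_{\fp}\ \text{ for every }\fp\in V(I)\text{ with }\height\fp\ls i.
\]
The reason for isolating $(\star_i)$ is that it is exactly the statement that the colon ideal $L_i:=(x_1,\dots,x_i):I$ lies in no prime of $V(I)$ of height at most $i$; that is, $(\star_i)$ is equivalent to $\height\bigl(I+L_i\bigr)\gs i+1$. By the invariance of the notion of ``general'' under permutation of the indices and the stability of general properties under finite intersection, it suffices to verify the required estimates for the initial segments $x_1,\dots,x_i$; the estimates for an arbitrary subset $\{v_1,\dots,v_i\}$ then follow after shrinking the relevant dense open set. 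So the whole proposition reduces to establishing the family $(\star_i)$.

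Granting $(\star_{i-1})$, I would first deduce the height bound $\height L_i\gs i$ by elementary prime avoidance. If $\fp$ were a prime minimal over $L_i$ with $\height\fp\ls i-1$, then $\fp\supseteq L_{i-1}$, and since $\height L_{i-1}\gs i-1$ (induction) such a $\fp$ is one of the finitely many minimal primes of $L_{i-1}$ of height exactly $i-1$. If such a $\fp$ contains $I$, then $(x_1,\dots,x_{i-1})_{\fp}=I_{\fp}$ by $(\star_{i-1})$, so $(L_{i-1})_{\fp}=R_{\fp}$, contradicting $L_{i-1}\subseteq\fp$; if instead $I\not\subseteq\fp$, then a general $x_i$ lies outside each of these finitely many primes, whence $(x_1,\dots,x_i)_{\fp}=R_{\fp}$ and $(L_i)_{\fp}=R_{\fp}$, again a contradiction. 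Thus $\height L_i\gs i$ for $0\ls i\ls s$; the case $i=s$ (using $(\star_{s-1})$) gives $\height\bigl((x_1,\dots,x_s):I\bigr)\gs s$, while the bounds $\height L_i\gs i$ and $\height(I+L_i)\gs i+1$ for $i\ls s-1$ are precisely property \textnormal{(1)} of Proposition \ref{resprop}.

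It remains to run the induction for $(\star_i)$ itself. The base case $(\star_0)$ is vacuous. Assuming $(\star_{i-1})$, a prime $\fp\in V(I)$ of height $\ls i$ at which $(x_1,\dots,x_i)_{\fp}=I_{\fp}$ could fail must have height exactly $i$, since for $\height\fp\ls i-1$ one already has $(x_1,\dots,x_{i-1})_{\fp}=I_{\fp}$ by $(\star_{i-1})$. As $i\ls s-1<s$ and $I$ satisfies $G_s$, every such $\fp$ has $\mu(I_{\fp})\ls\height\fp=i$, so $I_{\fp}$ is generated by $i$ elements, and the task becomes to find a \emph{single} generic coefficient tuple for which $i$ general elements of $I$ generate $I_{\fp}$ at \emph{all} these primes at once. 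This uniformity is the step I expect to be the main obstacle: in general there are infinitely many such primes, so one cannot merely intersect the separate open conditions $\{(x_1,\dots,x_i)_{\fp}=I_{\fp}\}$. I would handle it by a Bertini-type argument over the polynomial extension $S=R[\{Y_{kj}\}]$ with $X_k=\sum_j Y_{kj}a_j$: the locus $\{\fp\in\Spec R:\mu(I_{\fp})\ls i\}$ is open and, by $G_s$, contains $V(I)\cap\{\height\fp<s\}$; a constructibility (Chevalley) argument applied to the support of $IS/(X_1,\dots,X_i)$ then shows that the set of coefficient tuples for which $(x_1,\dots,x_i)_{\fp}\ne I_{\fp}$ for some $\fp$ in this locus is not dense, hence its complement contains a dense open set. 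This uniform generation statement is the technical core underlying \cite[3.1]{PX1} and the residual-intersection arguments of \cite[2.3, 2.4]{JU} and \cite{U1}; granting it, $(\star_i)$ holds on a dense open set, the induction closes, and intersecting the finitely many dense open sets obtained over all $i$ and all subsets yields the single dense open set required by the definition of ``general''.
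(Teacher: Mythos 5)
The paper itself gives no proof of this proposition --- it is quoted from \cite[3.1]{PX1} --- so I am comparing your proposal against the standard Artin--Nagata/Ulrich argument that underlies that reference (cf.\ \cite[1.5--1.6]{U1}). Your skeleton is the right one: carry $(\star_i)$, equivalently $\height(I+L_i)\gs i+1$, together with $\height L_i\gs i$ through an induction on $i$, dispose of the primes not containing $I$ by avoidance, and reduce subsets to initial segments by permutation symmetry. The gap is in the inductive step for $(\star_i)$, and you have misdiagnosed where the difficulty lies. First, there is no ``infinitely many primes'' obstacle once $(\star_{i-1})$ is in hand: any $\fp\in V(I)$ of height $\ls i$ at which $(x_1,\dots,x_i)_\fp=I_\fp$ fails contains $I+L_{i-1}$, hence by $\height(I+L_{i-1})\gs i$ it is one of the \emph{finitely many} minimal primes of $I+L_{i-1}$ of height exactly $i$, a set determined by $x_1,\dots,x_{i-1}$ alone. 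Second, the uniform statement you propose to establish by constructibility --- that general $x_1,\dots,x_i$ generate $I_\fp$ at \emph{every} $\fp$ in the locus $\{\mu(I_\fp)\ls i\}\cap V(I)$, with no height restriction --- is simply false, so no Chevalley argument can deliver it. Take $R=k[[u,v,w]]$ and $I=u\cdot(u,v,w)=(u^2,uv,uw)$, which satisfies $G_3$. Every element of $I$ has the form $ug$ with $g\in(u,v,w)$, and for general coefficients $(u,g)$ is a height-two prime $\fp$ lying in $V(I)$ with $\mu(I_\fp)=1$ and $(ug)_\fp\subsetneq(u)_\fp=I_\fp$; so for $i=1$ the ``bad'' set of coefficient tuples is everything. (Your auxiliary claim that $\{\mu(I_\fp)\ls i\}$ contains $V(I)\cap\{\height\fp<s\}$ is also incorrect: $G_s$ only gives $\mu(I_\fp)\ls\height\fp$, which may exceed $i$.)

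The difficulty you should have isolated instead is the circularity created by the finiteness reduction: the finitely many relevant primes at step $i$ depend on $x_1,\dots,x_{i-1}$, and for one further general element to complete a generating set of $I_\fp$ you need the images of $x_1,\dots,x_{i-1}$ to span a subspace of codimension at most one in $I_\fp/\fp I_\fp$ at primes that are only identified \emph{after} those elements are chosen. This non-degeneracy at unforeseen primes is exactly what the generic construction resolves: over $S=R[\{Y_{kj}\}]$ the elements $X_k=\sum_j Y_{kj}a_j$ have coefficient matrix generic over every residue field at once, so they are in general position in $I_\fp/\fp I_\fp$ at \emph{all} primes $\fp$ simultaneously; one proves the height bounds for the generic residual intersections there and then specializes via lemmas of the type \cite[2.2]{PX1}. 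Your proposal gestures at this machinery but substitutes for it a reduction to a statement that is both unproved (you explicitly ``grant'' it) and, as formulated, false; as written the induction does not close.
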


The following proposition shows that under certain assumptions, properties $G_s$ and $AN_{s}^-$ specialize after factoring a general element in $\fm$. This technical fact will be used in the proof of Lemma \ref{boundrn}.

\begin{prop}\label{GsANs1}
Let $s\gs0$ be an integer and $I$ and ideal satisfying $G_s$ and $AN_{s}^-$. Assume $d>0$ and $\depth R/I>0$. Let $x$ be a general element in $\fm$ and write $\overline{R}=R/(x)$. Then $\overline{I}$ satisfies $AN_{s}^-$. If $s<d$, then $\overline{I}$ satisfies $G_s$. 
\end{prop}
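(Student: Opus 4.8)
The plan is to establish the two assertions separately, after recording the effect of the hypotheses on $x$. Since $R$ is Cohen--Macaulay with $d=\depth R>0$ and $\depth R/I>0$, neither $\fm\in\Ass R$ nor $\fm\in\Ass(R/I)$; hence a general $x\in\fm$ is a nonzerodivisor on both $R$ and $R/I$. Therefore $\overline R=R/xR$ is Cohen--Macaulay of dimension $d-1$, and $I\cap xR=x(I:_R x)=xI$, so $\overline I=I\overline R\cong I\otimes_R\overline R$. Moreover, because $x$ is $R$-regular, for every prime $\fp\supseteq xR$ one has $\height_{\overline R}(\fp/xR)=\height_R\fp-1$, and consequently for every proper ideal $\mathfrak b$ of $R$
\[
\height_{\overline R}(\mathfrak b\,\overline R)=\height_R(\mathfrak b+xR)-1 .
\]
Applying this to the finitely many ideals that occur below, and using that a general $x\in\fm$ avoids the minimal primes of those ideals which are distinct from $\fm$, we also get $\height_R(\mathfrak b+xR)\gs\min\{\height_R\mathfrak b+1,\,d\}$.

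\emph{The $G_s$ assertion.} Fitting ideals commute with base change, so $\operatorname{Fitt}_i(\overline I)=\operatorname{Fitt}_i(I)\,\overline R$ for all $i\gs0$ (using $\overline I\cong I\otimes_R\overline R$). Recall that $I$ satisfies $G_s$ precisely when $\height_R(\operatorname{Fitt}_i(I)+I)\gs\min\{i+1,\,s\}$ for every $i\gs0$, with only finitely many nontrivial conditions (for $i<\mu(I)$). Then, for every $i$,
\[
\height_{\overline R}\bigl(\operatorname{Fitt}_i(\overline I)+\overline I\bigr)
=\height_R\bigl(\operatorname{Fitt}_i(I)+I+xR\bigr)-1
\gs\min\bigl\{\height_R(\operatorname{Fitt}_i(I)+I),\,d-1\bigr\}
\gs\min\{i+1,\,s\},
\]
where the last inequality uses the hypothesis $s<d$, i.e. $\min\{i+1,s\}\ls s\ls d-1$. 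Applying the same characterization in $\overline R$ shows $\overline I$ satisfies $G_s$.

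\emph{The $AN_s^-$ assertion.} Let $\overline K=(\overline{y_1},\dots,\overline{y_i}):_{\overline R}\overline I$ be a geometric $i$-residual intersection of $\overline I$ with $0\ls i\ls s$; since $I\twoheadrightarrow\overline I$ we may take $y_1,\dots,y_i\in I$. One checks directly that $\overline K=K^\flat\,\overline R$, where $K^\flat:=(y_1,\dots,y_i,x):_R I$, and that $x\in K^\flat$ (as $xI\subseteq xR\subseteq(y_1,\dots,y_i,x)$); hence $\overline R/\overline K\cong R/K^\flat$. Rewriting $K^\flat=(y_1,\dots,y_i,x):_R(I+xR)$ and translating the two height inequalities in the definition of ``geometric'' through the displayed formula gives $\height_R K^\flat\gs i+1$ and $\height_R\bigl((I+xR)+K^\flat\bigr)>i+1$, so $K^\flat$ is a geometric $(i+1)$-residual intersection of $I+xR$ — equivalently, a residual intersection of $I$ cut out by the $i+1$ elements $y_1,\dots,y_i,x$, the last of which is a general element of $\fm$ that we cannot assume lies in $I$. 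It remains to deduce that $R/K^\flat$ is Cohen--Macaulay; this is where the hypothesis that $I$ satisfies $AN_s^-$, together with $G_s$ and the genericity of $x$, is used: one argues as in the proof that geometric residual intersections of $I$ yield Cohen--Macaulay quotients, the only new ingredient being that the general element $x\in\fm$ behaves, relative to $I$, like a sufficiently general element in codimension $\ls i+1$ (cf. the localization and specialization arguments for the Artin--Nagata condition in \cite{U1}). Granting this, $\overline R/\overline K\cong R/K^\flat$ is Cohen--Macaulay; since $\overline K$ was an arbitrary geometric $i$-residual intersection of $\overline I$ with $i\ls s$, the ideal $\overline I$ satisfies $AN_s^-$.

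I expect the last step of the $AN_s^-$ argument to be the main obstacle: one must control the residual intersection $K^\flat=(y_1,\dots,y_i,x):_R I$ even though $x\notin I$, and verify that its quotient remains Cohen--Macaulay — in substance, that adjoining a general element of $\fm$ does not destroy the Artin--Nagata property. By contrast, the $G_s$ part is routine once one passes to Fitting ideals, and the role of the extra hypothesis $s<d$ is exactly to guarantee that cutting by the general hyperplane $xR$ never loses a unit of height where it is needed (which it would at $s=d$, when $\operatorname{Fitt}_{d-1}(I)+I$ might be $\fm$-primary).
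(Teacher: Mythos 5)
Your $G_s$ argument is correct and is essentially the paper's: both proofs pass to Fitting ideals (which commute with base change) and use that a general $x\in\fm$ avoids the finitely many relevant minimal primes, with $s<d$ (equivalently $s\ls d-1$) guaranteeing no height is lost where it matters.

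The $AN_s^-$ part, however, has a genuine gap, and you flag it yourself with the phrase ``Granting this.'' Your reduction identifies $\overline R/\overline K$ with $R/K^\flat$ for $K^\flat=(y_1,\dots,y_i,x):_R I$, but then you observe that $K^\flat$ is a residual intersection of $I+xR$ rather than of $I$ (since $x\notin I$), and the hypothesis $AN_s^-$ says nothing about residual intersections of $I+xR$. Nothing in your write-up bridges that; the sentence asking that ``$x$ behaves, relative to $I$, like a sufficiently general element in codimension $\ls i+1$'' is exactly the assertion that needs proof, and it is the entire content of the statement. The paper disposes of this step by citing \cite[1.13]{U1}, which states precisely that the Artin--Nagata property $AN_s^-$ specializes modulo an element that is regular on both $R$ and $R/I$ --- this is why the hypothesis $\depth R/I>0$ appears. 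To make your argument self-contained you would have to reprove that specialization lemma (roughly: lift a geometric $i$-residual intersection $\overline K$ of $\overline I$ to a geometric $i$-residual intersection $K=(y_1,\dots,y_i):_R I$ of $I$ itself, check that $x$ is regular on the Cohen--Macaulay ring $R/K$, and compare $K+xR$ with $K^\flat$), which is not routine; citing Ulrich's lemma is the intended route.
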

\begin{proof}
By the depth assumptions, $x$ is a regular element of $R$ and $R/I$. Then by \cite[1.13]{U1}, $\overline{I}$ satisfies $AN_s^-.$ Notice that $G_s$ condition is equivalent to $\height Fitt_{i-1}(I)\gs i$ for every $i\ls s$, where $Fitt_i(I)$ is the {\it $i$th Fitting ideal} of $I$ (see \cite[Proposition 20.6]{E}). Since general $x$ in $\fm$ avoids all the minimal primes of $Fitt_{i-1}(I)$ of height $i$, for $i\ls s$, we have that $\overline{I}$ also satisfies $G_s$.
\end{proof}

In the results of the following sections, we will often require that the ideal $I$ of analytic spread $\ell(I)=s$ satisfies conditions $G_s$ and $AN_{s-2}^{-}$. These assumptions hold true for several well studied classes of ideals, for instance:

\begin{enumerate}
\item[i)]  {\it Equimultiple ideals}, i.e., $\ell(I)=\height I$ (e.g. $\fm$-primary ideals),
\item[ii)] ideals of {\it analytic deviation one}, i.e.,  $\ell(I)=\height I+1$, that are generically complete intersection,
\item[iii)] ideals satisfying $G_s$ and {\it sliding depth} (see \cite[2.1]{JU}),
\item[iv)] in a Gorenstein ring $R$, ideals $I$ satisfying $G_s$ and $\depth R/I^j\gs d-g-j+1$ for every $1\ls j\ls s-g-1$, where $g=\grade I$ (see \cite[2.2]{JU}).
\end{enumerate}

Let $I=(a_1,\ldots,a_n)$ and $H^i=H^i(a_1,\ldots,a_n)$ the Koszul homology modules of $I$. The sliding depth condition in iii) refers to those ideals satisfying the depth inequalities $\depth H^i\gs d-n+i$ for every $0\ls i\ls n-g$. This property is independent of the generating set of $I$. 

This condition holds if all the modules $H^i$ are Cohen-Macaulay. Ideals with this stronger property are called {\it strongly Cohen-Macaulay}. The depth inequalities in iv) hold in particular if $I$ is strongly Cohen-Macaulay satisfying $G_s$. Examples of strongly Cohen-Macaulay ideals are Cohen-Macaulay ideals generated by $n\ls \height I+2$ elements and {\it licci} ideals \cite{Hun} (e.g. perfect ideals of height two and perfect Gorenstein ideals of height three).    

We recall that the {\it core} of $I$, $\core(I)$ is the intersection of all the minimal reductions $J$ of $I$.

\begin{lemma}\label{onlyone}
Let $R$ be a Noetherian local ring, with maximal ideal $\fm$, and infinite residue field. Let $I$ be an ideal of analytic spread $\ell(I)=s$. Assume $I\fm=J\fm$ for a minimal reduction $J$ of $I$. Then

\begin{enumerate}
\item $I\fm=(x_1,\ldots,x_s)\fm$ for general $x_1,\ldots,x_s$ in $I$.
\item  Assume $R$ is Cohen-Macaulay and $I$ satisfies $G_s$ and $AN_{s-1}^-$, then either $\core(I)=I$, or $\core(I)=I\fm$. In particular, $I\fm=J\fm$ for every minimal reduction $J$ of $I$.
\end{enumerate}
\end{lemma}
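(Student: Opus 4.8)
For part (1), the plan is to exploit the fact that, by Proposition \ref{genresprop} (and the discussion of general elements), a general choice of $x_1,\dots,x_s$ in $I$ generates a minimal reduction $J'$ of $I$ with $r_{J'}(I)=r(I)$, and such generating sequences form a dense open set. On the other hand, $I\fm=J\fm$ for the given $J$ means that the $k$-vector space $I\fm/I^2\fm$ (or rather the relevant quotient) collapses in a way that forces $I\fm\subseteq J$; I would first translate the hypothesis into the statement that $I\fm\subseteq \core(I)$ or at least into a statement about a single degree of the fiber-type module $\Z(I)$. More concretely: from $I\fm=J\fm$ and $I^{n+1}=JI^n$ for $n\ge r$, one gets $I^n\fm = JI^{n-1}\fm$ for all $n\ge 1$, so the module $\Z(I)=\bigoplus I^{n-1}\fm/I^n\fm$ is a cyclic-type module whose positive-degree pieces are killed after multiplying by the (images of the) generators of $J$. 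Since the property ``$x_1,\dots,x_s$ is a minimal reduction generated by general elements and $(x_1,\dots,x_s)\fm = I\fm$'' is detected by a single length equality $\ell(I\fm/(x_1,\dots,x_s)\fm)=0$, and since lengths are upper semicontinuous on the parameter space $\AA^{ns}_k$ (this is exactly the kind of semicontinuity used in the proof of Lemma \ref{welldefgoto}), the set of $(x_i)$ for which the equality holds is open; it is nonempty because $J$ witnesses it after a change of generators. Intersecting this open set with the dense open set of Proposition \ref{genresprop} gives the claim.

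For part (2), assume in addition $R$ Cohen-Macaulay and $I$ satisfies $G_s$ and $AN_{s-1}^-$. Let $J$ be any minimal reduction. By part (1) I may assume $J=(x_1,\dots,x_s)$ with $x_1,\dots,x_s$ general, so in particular $\height((x_1,\dots,x_i){:}I)\ge i$ for all $i$ and the conclusions of Proposition \ref{resprop} apply with this $s$ (note $AN_{s-1}^-$ implies $AN_{s-2}^-$). The hypothesis gives $J\fm = I\fm$, hence $I\fm\subseteq J$ for \emph{this} $J$, and I want to upgrade this to $I\fm\subseteq J'$ for every minimal reduction $J'$, which is exactly $I\fm\subseteq\core(I)$. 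Since always $\core(I)\subseteq J'\subseteq I$, and $I\fm$ is a maximal $R$-submodule of $I$ (as $I/I\fm$ is a simple... wait, $I/I\fm$ is a $k$-vector space of dimension $\mu(I)$), the submodules between $I\fm$ and $I$ correspond to $k$-subspaces of $I/I\fm$; the key point is that $\core(I)$ is an $R$-submodule containing $I\fm$ or strictly smaller, and once we know $I\fm\subseteq J'$ for one $J'$ it should propagate. The cleanest route: show $I\fm\subseteq\core(I)$ directly. By Proposition \ref{resprop}(3), for the general minimal reduction we have $K_i\cap I=(x_1,\dots,x_i)$; iterating gives control of $J\cap I$ type statements. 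Then a symmetry/genericity argument — any two minimal reductions are linked by the $\mathrm{GL}$-action on general generators, under which $I\fm$ is fixed — shows $I\fm\subseteq J'$ for all $J'$, so $I\fm\subseteq\core(I)$. Finally, since $I\fm$ is contained in $\core(I)$ and $\core(I)\subseteq I$, and the quotient $I/I\fm$ is a $k$-vector space, we have $I\fm\subseteq\core(I)\subseteq I$; I then argue that $\core(I)$ must be an $R$-submodule of $I$ containing $I\fm$ that is itself of the form $L\fm$-plus-nothing or all of $I$: concretely, if $\core(I)\ne I$ then $\core(I)\subsetneq I$, and since $\core(I)\supseteq I\fm$, Nakayama applied to $I/\core(I)$ shows $\core(I)\supseteq I\fm$ already forces... hmm, one still needs $\core(I)\subseteq I\fm$. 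For that: if $\core(I)\not\subseteq I\fm$ then $\core(I)+I\fm = I$, so $\core(I)=I$ by Nakayama. Hence either $\core(I)=I$ or $\core(I)\subseteq I\fm$, and combined with $I\fm\subseteq\core(I)$ this gives $\core(I)=I\fm$ in the latter case. The ``in particular'' is then immediate since $\core(I)\subseteq J\subseteq I$ forces $J\fm$-sandwiching.

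The main obstacle I anticipate is establishing $I\fm\subseteq\core(I)$ rigorously, i.e., passing from ``$I\fm=J\fm$ for one minimal reduction'' to ``$I\fm\subseteq J'$ for all minimal reductions''. The genericity argument in part (1) handles general $J'$, but genuine minimal reductions that are not general need a separate argument; here I expect to use that for any minimal reduction $J'$, by the residual assumptions $G_s$ and $AN_{s-1}^-$ one has $J'{:}I$ of height $\ge s$ and $R/(J'{:}I)$ Cohen-Macaulay, and that $I\fm\subseteq J'$ can be checked locally at the (finitely many, height $\le s-1$) associated primes of $I/I\fm$ where, by $G_s$, $I$ is either the unit ideal or generated by part of a regular sequence inside $J'$ — the same local argument structure as in Proposition \ref{resprop}(6) and the end of \cite[4.9]{PX1}. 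Making that localization argument work uniformly over all minimal reductions, rather than just general ones, is the delicate point.
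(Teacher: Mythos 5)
The decisive gap is in part (2). You correctly isolate the difficulty --- upgrading ``$I\fm\subseteq J'$ for \emph{general} minimal reductions $J'$'' to ``$I\fm\subseteq J'$ for \emph{all} minimal reductions,'' i.e.\ $I\fm\subseteq\core(I)$ --- but you do not close it: the ``$\mathrm{GL}$-action/symmetry'' idea fails because arbitrary minimal reductions are not in the orbit of general ones, and the localization argument you sketch at the end is not carried out. The missing ingredient is \cite[4.5]{CPU}: under $G_s$ and $AN_{s-1}^-$, the core is \emph{already equal} to a finite intersection of ideals generated by sequences of $s$ general elements, so part (1) alone gives $I\fm\subseteq\core(I)$ and no statement about non-general reductions is ever needed. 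Your argument for the other inclusion is also incorrect: $\core(I)\not\subseteq I\fm$ only says that the image of $\core(I)$ in the $\mu(I)$-dimensional $k$-vector space $I/I\fm$ is a nonzero subspace, not that it is all of $I/I\fm$, so the step ``$\core(I)+I\fm=I$, hence $\core(I)=I$ by Nakayama'' does not follow. The correct dichotomy is: if $\mu(I)=s$ then $I$ is its own unique minimal reduction and $\core(I)=I$; otherwise every $a\in I\setminus I\fm$ is avoided by some general minimal reduction, whence $\core(I)\subseteq I\fm$ and equality with $I\fm$ follows.

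Part (1) also has an unjustified step. The module $I\fm/(x_1,\ldots,x_s)\fm$ is typically of infinite length, so ``upper semicontinuity of lengths'' (which in Lemma \ref{welldefgoto} is applied to finite-length modules over a one-dimensional reduction) does not apply; and even granting some openness statement, a dense open subset of $\AA^{ns}_k$ need not contain the particular point $(\overline{u_{ij}})$ corresponding to the given $J$, so ``nonempty because $J$ witnesses it'' only helps \emph{after} openness is established, which is exactly what is missing. The actual content of the proof is to show that the generic reduction $J'=(\sum_j X_{ij}a_j)$ in $S=R[\{X_{ij}\}]$ satisfies $I\fm S_{\fm S}=J'\fm_{\fm S}$: one writes $I\fm S=J'\fm+(X_{ij}-u_{ij})\cap I\fm S$ using the given $J$, localizes at $\fm S+(X_{ij}-u_{ij})$ where the $X_{ij}-u_{ij}$ form a regular sequence on $S/I\fm S$, applies Nakayama, and then localizes at $\fm S$; the standard specialization lemma \cite[3.1 (a)]{MX} then transfers the equality to general $x_1,\ldots,x_s$. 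Your sketch omits this mechanism entirely, and the material about $I^n\fm=JI^{n-1}\fm$ and $\Z(I)$ plays no role in the statement to be proved.
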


\begin{proof}

Let $I=(a_1,\ldots, a_n)$ and $J=(b_1,\ldots, b_s)$, where $b_i=\sum_{j=1}^{n}u_{ij}a_j$ for $1\ls i\ls n$ and $(u_{ij})\in R^{sn}$. Let $\{X_{ij}\}$ be a set of $s\times n$ indeterminates, with $1\ls i\ls s$, $1\ls j\ls n$, and $S=R[\{X_{ij}\}].$ Let $J'\subset IS$ be the $S$-ideal $(b_1',\ldots, b_s')$, where $b_i'=\sum_{j=1}^{n}X_{ij}a_j$ for $1\ls i\ls n$. It follows $I\fm S\subseteq J'\fm +(X_{ij}-u_{ij})$ and then $$I\fm S= J'\fm+(X_{ij}-u_{ij})\cap I\fm S.$$ Localizing this equality at the maximal $S$-ideal $\fm'=\fm S+(X_{ij}-u_{ij})$, we obtain $$I\fm S_{\fm'}= J'\fm_{\fm'}+(X_{ij}-u_{ij})_{\fm'}\cap I\fm S_{\fm'}=J'\fm_{\fm'}+(X_{ij}-u_{ij}) I\fm S_{\fm'},$$ where the last equality holds since $\{X_{ij}-u_{ij}\}_{ij}$ is a regular sequence in $S_{\fm'}/I\fm S_{\fm'}$. We conclude by Nakayama's lemma that $I\fm S_{\fm'}=J'\fm_{\fm'}.$ Localizing further at the prime $S$-ideal $\fm S$ we obtain $I\fm S_{\fm S}=J'\fm_{\fm S}.$  Part (1) now follows by standard specialization techniques (see for example \cite[3.1 (a)]{MX}).   

Now, we assume $I$ satisfies $G_s$ and $AN_{s-1}^-$ to show (2). By (1) we have $I\fm\subseteq (x_1,\ldots,x_s)$ an ideal generated by a sequence of $s$ general elements in $I$. By \cite[4.5]{CPU}, $\core(I)$ is equal to a finite intersection of ideals generated by  sequences of $s$ general elements. Therefore, $I\fm\subseteq \core(I)$. If $\mu(I)=s$, then $\core(I)=I$, otherwise given an element in $I\setminus I\fm$, there is a general minimal reduction that does not include it and hence $\core(I)\subseteq I\fm$. For the last statement, if $J$ is a minimal reduction of $I$, then $I\fm=J\cap I\fm=J\fm$.
\end{proof}
 
\begin{thm}\label{mmsimp}
Let $R$ be a local Cohen-Macaulay ring of dimension $d$, with maximal ideal $\fm$, and infinite residue field. Let $I$ be an $R$-ideal with analytic spread $\ell(I)=d$, grade $g$, and  satisfying $G_d$ and $AN_{d-2}^{-}$.
\begin{enumerate}
\item \textnormal{(}\cite[3.4]{PX1}\textnormal{)} Assume $\depth R/I\gs \min\{d-g, 1\}$. Then $I$ is of minimal $j$-mul\-ti\-pli\-ci\-ty if and only if $r(I)\ls 1$.
\item $I$ is of Goto-minimal $j$-multiplicity if and only if $ I\fm=J\fm$ for one, and then every, minimal reduction $J$ of $I$. 
\end{enumerate}
In particular, if $I$ is an $\fm$-primary ideal, then the notions of minimal $j$-multiplicity and Goto-minimal $j$-multiplicity coincide with the classical ones.
\end{thm}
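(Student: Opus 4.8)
The plan is to reduce the whole statement to the one-dimensional Cohen--Macaulay ring $\overline R$ of Proposition~\ref{jmultdim1}. Part~(1) is exactly \cite[3.4]{PX1}, so only part~(2) and the final sentence need attention. By Lemma~\ref{welldefgoto} it suffices to test the defining condition of Goto-minimal $j$-multiplicity on one general choice of elements, so fix general $x_1,\ldots,x_d$ in $I$. Then $J=(x_1,\ldots,x_d)$ is a minimal reduction of $I$ with $r_J(I)=r(I)$, and by Proposition~\ref{genresprop} the generators $x_1,\ldots,x_d$ satisfy property~(1) of Proposition~\ref{resprop}; write $K=(x_1,\ldots,x_{d-1}):I$.

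First I would pin down $\overline R$. Since $\mu(J)=d\ls\height J:I$ and $I$ satisfies $G_d$ and $AN^-_{d-2}$, Proposition~\ref{resprop}(2) shows that $R/K$ is unmixed of dimension one, hence Cohen--Macaulay, and Proposition~\ref{resprop}(3) gives $K\cap I=(x_1,\ldots,x_{d-1})$. Property~(1) also yields $\height(I+K)\gs d$, so $V(I)\cap V(K)\subseteq\{\fm\}$ and therefore $\HH{0}{I}{R/K}=\HH{0}{\fm}{R/K}=0$; equivalently $K:I^\infty=K$, so $\overline R=R/K$ is precisely the ring of Proposition~\ref{jmultdim1}.

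The heart of part~(2) is the equivalence $\overline{I\fm}=\overline{x_d\fm}\ \Leftrightarrow\ I\fm=J\fm$. The direction $(\Leftarrow)$ is immediate after reducing modulo $K$, since $x_1,\ldots,x_{d-1}\in K$. For $(\Rightarrow)$ I would first record the elementary identity $(x_1,\ldots,x_{d-1})\cap I\fm=(x_1,\ldots,x_{d-1})\fm$: an element $\sum_{i=1}^{d-1}r_ix_i$ lies in $I\fm$ if and only if its image in the $k$-vector space $I/I\fm$ vanishes, and the images of general elements $x_1,\ldots,x_{d-1}$ are $k$-linearly independent there, forcing every $r_i\in\fm$. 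Now if $\overline{I\fm}=\overline{x_d\fm}$ then $I\fm\subseteq x_d\fm+K$; given $a\in I\fm$, write $a=x_dm+c$ with $m\in\fm$, so $c=a-x_dm\in I\fm\cap K$. As $I\fm\subseteq I$, Proposition~\ref{resprop}(3) gives $c\in I\cap K=(x_1,\ldots,x_{d-1})$, hence $c\in(x_1,\ldots,x_{d-1})\cap I\fm=(x_1,\ldots,x_{d-1})\fm\subseteq J\fm$ and so $a\in J\fm$; since $J\fm\subseteq I\fm$ trivially, $I\fm=J\fm$.

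It follows that $I$ is of Goto-minimal $j$-multiplicity if and only if $I\fm=J\fm$ for this general minimal reduction $J$, and Lemma~\ref{onlyone} then upgrades this to ``for one, and then every, minimal reduction'': its part~(1) spreads the equality from one reduction to all general reductions, and its part~(2) to all minimal reductions --- the hypothesis $AN^-_{d-1}$ required there is automatic when $\ell(I)=d$, since a geometric $(d-1)$-residual intersection $K'$ of $I$ has $\dim R/K'\ls 1$ with $R/K'$ unmixed, hence Cohen--Macaulay. Finally, if $I$ is $\fm$-primary then $\height I=d$, so $x_1,\ldots,x_{d-1}$ is a regular sequence and $(x_1,\ldots,x_{d-1}):I^\infty=(x_1,\ldots,x_{d-1})$ because each $I^n$ is $\fm$-primary; hence $\overline R=R/(x_1,\ldots,x_{d-1})$, $j(I)=e(I)$, and the argument of the third paragraph applies verbatim to identify $\overline{I\fm}=\overline{x_d\fm}$ with Goto's condition $I\fm=J\fm$ of \cite{Go1}, while the parallel statement that $\overline{I}^2=\overline{x_dI}$ is equivalent to Valla's condition $I^2=JI$ is part of \cite{PX1}. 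The step I expect to be most delicate is fixing $\overline R=R/K$ and proving the two intersection identities $K\cap I=(x_1,\ldots,x_{d-1})$ and $(x_1,\ldots,x_{d-1})\cap I\fm=(x_1,\ldots,x_{d-1})\fm$ of the second and third paragraphs: this is exactly where the residual hypotheses $G_d$, $AN^-_{d-2}$, the geometric bound $\height(I+K)\gs d$, and the genericity of the $x_i$ all enter, and it is what lets part~(2) dispense with any condition on the depths of the powers of $I$.
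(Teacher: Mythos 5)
Your reduction to the one-dimensional ring and the translation of $\overline{I\fm}=\overline{x_d\fm}$ into $I\fm=J\fm$ is essentially the paper's own argument. The only structural difference is in how the key intersection identity is reached: you identify $\overline R$ with $R/K$ for $K=(x_1,\ldots,x_{d-1}):I$ by checking $\HH{0}{I}{R/K}=0$ and then invoke $K\cap I=(x_1,\ldots,x_{d-1})$ from Proposition \ref{resprop}(3), whereas the paper obtains $(x_1,\ldots,x_{d-1}):I^{\infty}\cap I\fm=(x_1,\ldots,x_{d-1})\fm$ by localizing at the associated primes of $R/(x_1,\ldots,x_{d-1})$. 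Both routes rest on the same inputs (Propositions \ref{resprop} and \ref{genresprop}, and the $k$-linear independence of the images of $x_1,\ldots,x_{d-1}$ in $I/I\fm$), and both are correct; your handling of the $\fm$-primary ``in particular'' clause is also fine.

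The one genuine gap is your justification of the hypothesis $AN^-_{d-1}$ needed to apply Lemma \ref{onlyone}(2) for the ``one, and then every minimal reduction'' conclusion. You assert that an arbitrary geometric $(d-1)$-residual intersection $K'$ of $I$ has $R/K'$ unmixed of dimension at most one; but that unmixedness is exactly the content of $AN^-_{d-1}$ at level $d-1$ and is not supplied by anything you have established. Proposition \ref{resprop}(2) only covers the residual intersections $K_i$ arising from the special generating sequences of Proposition \ref{resprop}(1); an arbitrary geometric $(d-1)$-residual intersection need not be of that form, and a priori $\fm$ could be an embedded prime of $R/K'$ when $\height K'=d-1$. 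The paper closes precisely this step by citing \cite[1.9]{U1}, which passes from $AN^-_{d-2}$ to $AN^-_{d-1}$ under the standing hypotheses $G_d$ and $\ell(I)=d$; you should invoke that result (or actually prove the unmixedness) rather than asserting it. With that citation in place, the rest of your argument goes through.
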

 
\begin{proof}  
Let $x_1,\ldots, x_d$ be general elements in $I$ and $\overline{R}=R/(x_1,\ldots, x_{d-1}):I^{\infty}$. If $d=g$, $I$ is $\fm$-primary and then $(x_1,\ldots,x_{d-1}):I^{\infty}=(x_1,\ldots,x_{d-1})$. If $\overline{I}=I/(x_1,\ldots,x_{d-1})$ is of minimal multiplicity, then $\G(I/((x_1,\ldots,x_{d-1})))$ is Cohen-Macaulay (see \cite[Theorem 2.9]{RV1}), therefore by Sally's machine \cite[Lemma 1.4]{RV1} $\G(I)$ is Cohen-Macaulay. Hence by Valabrega-Valla criterion \cite[Theorem 1.1]{RV1} $(x_1,\ldots, x_{d-1})\cap I^2=(x_1,\ldots, x_{d-1})I$.

By Proposition \ref{genresprop} it follows that either $I_{\fp}=(x_1,\,\ldots,x_{d-1})_{\fp}$ or $I_{\fp}=R_{\fp}$ for every $\fp$ in the punctured spectrum and we have that $\depth R/(x_1,\,\ldots,x_{d-1})>0$. Hence, $$[(x_1,\,\ldots,x_{d-1}):I^{\infty}\cap I]_{\fp}=(x_1,\,\ldots,x_{d-1})_{\fp}$$ for every $\fp\in \Ass(R/(x_1\,\ldots,x_{d-1}))$. Likewise, by Proposition \ref{genresprop} if we assume $\depth R/I>0$ then $\depth R/(x_1,\,\ldots,x_{d-1})I>0$. Hence, $$[(x_1,\,\ldots,x_{d-1}):I^{\infty}\cap I^2]_{\fp}=(x_1,\,\ldots,x_{d-1})I_{\fp}$$ for every $\fp\in  \Ass(R/(x_1\,\ldots,x_{d-1})I).$ It follows that under the assumptions we have $$(x_1,\,\ldots,x_{d-1}):I^{\infty}\cap I^2=(x_1,\,\ldots,x_{d-1})I\,\,\text{   and   }\,\,(x_1,\,\ldots,x_{d-1}):I^{\infty}\cap  I=(x_1,\,\ldots,x_{d-1}).$$ Moreover, since $x_1,\,\ldots,x_{d-1}$ are part of a minimal set of generators of $I$ we have $$(x_1,\,\ldots,x_{d-1}):I^{\infty}\cap  I\fm=(x_1,\,\ldots,x_{d-1})\cap  I\fm=  (x_1,\,\ldots,x_{d-1})\fm.$$ 
From this we conclude that   $\ll(\overline{I^2}/\overline{x_d I})=0$ if and only if $I^2=(x_1,\,\ldots,x_d)I$, and also  $\ll(\overline{ I\fm}/\overline{x_d \fm})=0$ if and only if $I\fm=(x_1,\,\ldots,x_d)\fm.$ 

Now, (1) follows from the fact that $d$ general elements generate a reduction $J$ such that $r_J(I)=r(I)$ (see \cite[8.6.6]{SH}), and (2) follows from \cite[1.9]{U1} and Lemma \ref{onlyone}. 
\end{proof}

\section{Cohen-Macaulay Fiber Cone}
The following lemma will be used in the proof of the first theorem and in one of the corollaries.

\begin{lemma}\label{filtrdepth}
Let $R$ be a Cohen-Macaulay local ring of dimension $d$, with maximal ideal $\fm$, and infinite residue field. Let $I$ be an $R$-ideal with analytic spread $\ell(I)=s$ and satisfying $G_s$ and $AN^{-}_{s-2}$. Let $J=(x_1,\ldots,x_s)$ be a minimal reduction of $I$ with $\{x_1,\ldots,x_s\}$ as in Proposition \ref{resprop} (cf. Remark \ref{reducht}). Let $\{I_n\}_{n\gs 0}$ be one of the two filtrations $\{I^n\}_{n\gs 0}$ or $\{I^n\fm\}_{n\gs -1}$, and $\K$ its associated graded module, i.e., $\G(I)$ or $\Z(I)$ respectively. If $(x_1,\ldots, x_h)\cap I_n=(x_1,\ldots, x_h)I_{n-1}$ for some $1\ls h\ls s$ and every $n\gs 1$. Then $\depth \K\gs h$. 
\end{lemma}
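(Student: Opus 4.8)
The plan is to reduce the statement to a regularity assertion: I will exhibit the degree-one initial forms $x_1^*,\ldots,x_h^*\in[\G(I)]_1$ of $x_1,\ldots,x_h$ as a regular sequence on $\K$, after which $\depth\K\gs h$ is immediate since these elements lie in the homogeneous maximal ideal of $\G(I)$. (The forms genuinely have degree one: if $x_i\in I^2$ for some $i\ls h$, then the case $n\ls2$ of the assumed equality, together with $x_1,\ldots,x_s$ minimally generating $J$, yields a contradiction.)

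A first step is to propagate the assumed equality to all intermediate indices: $(x_1,\ldots,x_i)\cap I_n=(x_1,\ldots,x_i)I_{n-1}$ for every $0\ls i\ls h$ and $n\gs1$; the case $i=1$ is what is actually needed afterwards. I would prove this by a double induction, descending on $i$ and ascending on $n$. Given $z\in(x_1,\ldots,x_i)\cap I_n$, the case $i+1$ gives $z=x_{i+1}b+z'$ with $b\in I_{n-1}$ and $z'\in(x_1,\ldots,x_i)I_{n-1}$; then $x_{i+1}b\in(x_1,\ldots,x_i)$, so $b\in(x_1,\ldots,x_i):x_{i+1}=K_i$ by Proposition \ref{resprop}(3). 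Once $I_{n-1}\subseteq I$ --- that is, for $n\gs2$ in either filtration --- the same part of Proposition \ref{resprop} gives $b\in K_i\cap I=(x_1,\ldots,x_i)$, the induction on $n$ yields $b\in(x_1,\ldots,x_i)I_{n-2}$, and hence $x_{i+1}b\in I\,(x_1,\ldots,x_i)I_{n-2}=(x_1,\ldots,x_i)I_{n-1}$, so $z$ has the required form. The base case $n=1$ for $\{I^n\}$ is the trivial identity $(x_1,\ldots,x_i)\cap I=(x_1,\ldots,x_i)$. For $\{I^n\fm\}$ it is the genuinely new equality $(x_1,\ldots,x_i)\cap I\fm=(x_1,\ldots,x_i)\fm$, and I expect this to be the main obstacle: the purely formal residual identities of Proposition \ref{resprop} no longer suffice, and one must bring in genericity --- taking $x_1,\ldots,x_s$ among the general generators of $J$, which still satisfy Proposition \ref{resprop}(1) --- to argue that a relation $z=\sum_{j\ls i}\lambda_jx_j$ with some $\lambda_{j_0}$ a unit would force the image of $x_{j_0}$ into $\overline I\,\overline\fm$ over $R/(x_j:j\ne j_0)$, where $\overline I\ne0$ since $\mu(I)\gs\ell(I)=s>i-1$, contradicting that a general element of a nonzero ideal avoids its product with the maximal ideal.

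With the equality available at all indices $i\ls h$, I would conclude by induction on $h$, in the spirit of the Valabrega--Valla criterion. Nothing is needed for $h=0$. For $h\gs1$: $x_1$ is a regular element of $R$ by Proposition \ref{resprop}, so $(x_1)\cap I_n=x_1I_{n-1}$ gives $(I_n:x_1)=I_{n-1}$ for all $n$, whence $x_1^*$ is a nonzerodivisor on $\K$; the same equalities identify $\K/x_1^*\K$ with the corresponding associated graded module (respectively $\G$ or $\Z$) of $\overline I=I/(x_1)$ over $\overline R=R/(x_1)$. Therefore $\depth\K=\depth(\K/x_1^*\K)+1$, and the inductive hypothesis over $\overline R$ closes the argument, provided one verifies the hypotheses of the lemma there with $h$ replaced by $h-1$: that $\overline R$ is Cohen--Macaulay, $\ell(\overline I)=s-1$, that $\overline J=(\overline{x_2},\ldots,\overline{x_s})$ is a minimal reduction of $\overline I$ whose generators are as in Proposition \ref{resprop}, that $\overline I$ satisfies $G_{s-1}$ and $AN^{-}_{s-3}$, and that the assumed equality descends to $(\overline{x_2},\ldots,\overline{x_h})$ over $\overline R$ --- this last being a direct computation from the index-$h$ equality over $R$ using $(x_1,\ldots,x_h)\cap(I_n+(x_1))=((x_1,\ldots,x_h)\cap I_n)+(x_1)$. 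Besides the base equality for $\{I^n\fm\}$ noted above, the remaining technical burden is this descent of $G_s$ and $AN^{-}_{s-2}$ (to $G_{s-1}$ and $AN^{-}_{s-3}$) modulo a general element of $I$, which is standard but needs care.
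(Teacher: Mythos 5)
Your overall strategy --- exhibiting all $h$ initial forms $x_1^*,\ldots,x_h^*\in[\G(I)]_1$ as a regular sequence on $\K$ --- cannot work in the main case of interest, namely when $h>\grade I$. If $x_1^*$ is a nonzerodivisor on $\K$, then $x_1$ is forced to be a nonzerodivisor on $R$: any $0\neq a\in 0:_Rx_1$ lies in $I_{n-1}\setminus I_{n}$ for some $n$ by Krull intersection, and $x_1a=0\in I_{n+1}$ would then contradict the regularity of $x_1^*$. Iterating, a length-$h$ regular sequence of degree-one forms on $\K$ forces $x_1,\ldots,x_h$ to be an $R$-regular sequence, which is impossible once $h>\grade I$. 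Concretely, your inductive step asserts that ``$x_1$ is a regular element of $R$ by Proposition \ref{resprop},'' but Proposition \ref{resprop}(2) only gives regularity of $x_1$ on $R/K_0$ with $K_0=\ann I$, and $K_0=0$ only when $\grade I>0$; moreover, even when $g=\grade I>0$, after passing to $R/(x_1,\ldots,x_g)$ the image of $I$ has grade zero, so $x_{g+1}$ is a zerodivisor there and the induction halts. Since the lemma is applied in Theorem \ref{FCM} with $h=s=\ell(I)$ to ideals whose grade is typically much smaller than $s$, this is not a boundary case.

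The paper's proof is structured precisely around this obstruction. It establishes the regular-sequence claim only for $t=\min\{g,h\}$ elements via Valabrega--Valla; your double induction on $i$ and $n$ for the intermediate intersections is essentially the paper's computation, and your worry about the base case $(x_1,\ldots,x_i)\cap I\fm=(x_1,\ldots,x_i)\fm$ is unfounded --- the $x_j$ are part of a minimal generating set of $I$ (standard for minimal reductions), hence linearly independent modulo $I\fm$, and the equality is immediate without any genericity argument. The remaining $h-g$ units of depth are then obtained by a different device: reduce to $g=0$ by factoring out $x_1,\ldots,x_g$, pass to $\overline{R}=R/K_0$ with $K_0=\ann I$ (which satisfies $K_0\cap I=0$, so $\F(I)$, $\ell(I)$, and the hypotheses persist by Proposition \ref{resprop}), observe that $h-\grade$ strictly drops, and compare $\K$ with the corresponding graded module $\overline{\K}$ over $\overline{R}$ via the exact sequence $0\to (I_N+K_0)/I_N\,[-N+1]\to\K\to\overline{\K}\to 0$, whose kernel is isomorphic to $K_0$ and has depth $d$; this gives $\depth\K\gs\depth\overline{\K}\gs h$ by induction on $h-g$. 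You would need to import this (or an equivalent) mechanism to close the gap; as written, your argument proves the lemma only for $h\ls\grade I$.
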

\begin{proof}
Let $N=1$ if $\{I_n\}_{n\gs 0}=\{I^n\}_{n\gs 0}$ and $N=2$ if $\{I_n\}_{n\gs 0}=\{I^n\fm\}_{n\gs -1}$. Let $g=\grade(I)$ and $t=\min\{g,h\}$, we will prove first that $x_1^*,\ldots,x_t^*\in I/I^2=[\G(I)]_1$ is a regular sequence of $\K$. By Proposition \ref{resprop}, (2), $x_1,\ldots, x_t$ is a regular sequence, then by the Valabrega-Valla criterion, we need to prove $ (x_1,\ldots,x_t)\cap I_n=(x_1,\ldots,x_t)I_{n-1}$ for every $n\gs 1$. We will proceed by descending induction on $i$ and by induction on $n$ to show the equality $(x_1,\ldots,x_i)\cap I_n=(x_1,\ldots,x_i)I_{n-1}$ for every $t\ls i\ls h$ and $n\gs 1$. If $i=h$ it holds for every $n$ by assumption. If $1\ls n\ls N$ this holds for every $i$ trivially for $\{I^n\}_{n\gs 0}$ and for $\{I^n\fm\}_{n\gs -1}$ because $\{x_1,\ldots, x_s\}$ is part of a minimal set of generators of $I$. Assume it holds for every $h\gs i>i_0$ and every $n$; and for $i=i_0$ and every $1\ls n<n_0$, with $n_0>N$. We will prove it holds for $i_0$ and $n_0$:
\vskip 0.1in
\noindent \begin{tabular}{llll}
&&$(x_1,\ldots,x_{i_0})\cap I_{n_0}$ &\\
&=&$(x_1,\ldots,x_{i_0})\cap(x_1,\ldots,x_{i_0+1}) \cap I_{n_0}$&\\
&=& $(x_1,\ldots,x_{i_0})\cap (x_1,\ldots,x_{i_0+1})I_{n_0-1}$&\\
&=& $(x_1,\ldots,x_{i_0})I_{n_0-1}+ (x_1,\ldots,x_{i_0})\cap x_{i_0+1}I_{n_0-1}$&\\
&=&$(x_1,\ldots,x_{i_0})I_{n_0-1}+ x_{i_0+1}\big((x_1,\ldots,x_{i_0}):x_{i_0+1}\cap I_{n_0-1}\big)$&\\
&=&$(x_1,\ldots,x_{i_0})I_{n_0-1}+ x_{i_0+1}\big((x_1,\ldots,x_{i_0})\cap I_{n_0-1}\big),$&\hspace{-0.6cm}{ by Proposition \ref{resprop}, (3), since $I_{n_0-1}\subseteq I$,}\\
&=&$(x_1,\ldots,x_{i_0})I_{n_0-1}+ x_{i_0+1}(x_1,\ldots,x_{i_0})I_{n_0-2}$&\\
&$\subseteq$& $(x_1,\ldots,x_{i_0})I_{n_0-1}$.& 
\end{tabular}
\vskip 0.1in
If $h\ls g$ the above argument concludes the proof, then we will assume $h\gs g$. We  proceed by induction on $\sigma(I)= h-g\gs 0$ to show $\depth \K\gs h$, the case $\sigma(I)=0 $ being already covered. We can assume $\sigma(I)\gs 1$ and factoring out $x_1,\ldots,x_g$ we can also assume $g=0$. Let $K_0=\ann I$, all the assumptions are preserved after factoring out $K_0$, i.e., considering the ideal $\overline{I}$ where $\overline{R}=R/K_0$. Indeed, by Proposition \ref{resprop}, (2), (3), and (5), $\overline{R}$ is Cohen-Macaulay of dimension $d$, $\overline{I}$ satisfies $G_s$ and $AN_{s-2}^-$, and since $K_0\cap I=0$ we have $\F(\overline{I})=\F(I)$ and hence $\ell(\overline{I})=s$. The ideal $\overline{J}$ is then a minimal reduction of $\overline{I}$ and in order to show that the sequence $\{\overline{x_1},\ldots,\overline{x_s}\}$ is also as in Proposition \ref{resprop} it is enough to show that if $K\subseteq I$ is any ideal, then $\height \overline{K}:\overline{I}=\height K:I$ and $\height \overline{I} + \overline {K}:\overline{I}=\height I+K:I$. The latter holds because $K_0\subseteq K:I$ and 
$$\overline{K}:\overline{I}=(K+K_0):I/K_0=(K+K_0\cap I):I/K_0=K:I/K_0.$$
Furthermore, $\sigma(\overline{I})<\sigma(I)$ because by Proposition \ref{resprop}, (2) $\grade \overline{I}>0$. By induction hypothesis $\depth \overline{\K}\gs h$, where $\overline{\K}=\bigoplus_{n=0}^{\infty}\overline{I_n}/\overline{I_{n+1}}$. From the following exact sequence   
\[
\begin{tikzcd}
0 \arrow{r} & (I_{N} +K_0)/I_{N}\,[-N+1]\arrow{r} &\K \arrow{r} &\overline{\K} \arrow{r}&0,
\end{tikzcd}
\]
we conclude $\depth\K\gs\min\{\depth\overline{\K}, \depth  (I_{N} +K_0)/I_{N}\}$. By Proposition \ref{resprop}, (3) we have the isomorphism $( I_{N}+K_0)/ I_{N}\cong K_0/K_0\cap  I_{N}\cong K_0$. Since $\overline{R}$ is Cohen-Macaulay it follows $\depth (I_{N}+K_0)/ I_{N}\gs d$. Therefore, $\depth\K\gs \depth\overline{\K}\gs h$, as desired.
\end{proof}

We are now ready to present the first main theorem of this section. The result relates the Cohen-Macaulayness of $\F(I)$ with the depths of $\G(I)$ and $\R(I)$ under an assumption that is weaker than Goto-minimal $j$-multiplicity.

\begin{thm}\label{FCM}
Let $R$ be a Cohen-Macaulay local ring of dimension $d$, with maximal ideal $\fm$, and infinite residue field. Let $I$ be an $R$-ideal with analytic spread $\ell(I)=s$ and reduction number $r$.  Assume $I$ satisfies $G_s$ and $AN^{-}_{s-2}$. Let $J$ be a minimal reduction of $I$ such that $r_J(I)=r$ and assume $J\cap I^{n}\fm=JI^{n-1}\fm$ for every $2\ls n\ls r$, then the following are equivalent:
\begin{enumerate}
\item[i)] $\F(I)$ is Cohen-Macaulay.
\item[ii)] $\depth \G(I)\gs s-1.$
\item[iii)] $\depth \R(I)\gs s.$
\end{enumerate}
In particular, this holds if $s=d$ and $I$ is of Goto-minimal $j$-multiplicity satisfying $G_d$ and $AN_{d-2}^-$.
\end{thm}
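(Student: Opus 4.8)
The plan is to reduce the whole statement to a single depth estimate, $\depth\Z(I)\gs s$, and then read off i) $\Leftrightarrow$ ii) from Proposition \ref{ineqdepths} and ii) $\Leftrightarrow$ iii) from the standard comparison between the depths of $\R(I)$ and $\G(I)$. By Remark \ref{reducht} every minimal reduction of $I$ satisfies $\mu(J)\ls s\ls\height J:I$, so we may assume $J=(x_1,\dots,x_s)$ with $x_1,\dots,x_s$ chosen as in Proposition \ref{resprop}; the hypothesis $J\cap I^{n}\fm=JI^{n-1}\fm$ is intrinsic to $J$, so nothing is lost. I would first note that this equality in fact holds for \emph{every} $n\gs1$: for $n=1$ it reads $J\cap I\fm=J\fm$, which holds because $x_1,\dots,x_s$ are part of a minimal generating set of $I$ (their images in $[\F(I)]_1=I/I\fm$ form a homogeneous system of parameters of $\F(I)$, hence are $k$-linearly independent), while for $n\gs r+1$ one has $I^{n}\fm=JI^{n-1}\fm\subseteq J$, so the equality is automatic. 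Feeding this into Lemma \ref{filtrdepth} with $h=s$ and the filtration $\{I^{n}\fm\}_{n\gs-1}$, whose associated graded module is $\Z(I)$, yields $\depth\Z(I)\gs s$.

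Granting $\depth\Z(I)\gs s$, the equivalence i) $\Leftrightarrow$ ii) is formal. Since $\dim\F(I)=\ell(I)=s$, statement i) means exactly $\depth\F(I)=s$. If $\depth\G(I)\gs s-1$, then part (5) of Proposition \ref{ineqdepths} gives $\depth\F(I)\gs\min\{\depth\Z(I),\depth\G(I)+1\}\gs s$, so $\F(I)$ is Cohen-Macaulay; conversely, if $\depth\F(I)=s$, then part (6) gives $\depth\G(I)\gs\min\{\depth\Z(I),\depth\F(I)-1\}\gs s-1$.

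For ii) $\Leftrightarrow$ iii) I would use the inequalities $\min\{\depth\G(I)+1,\,d\}\ls\depth\R(I)\ls\depth\G(I)+1$, valid for any ideal of positive grade in a Cohen-Macaulay local ring. The upper bound follows from the depth lemma applied to $0\to\R(I)_{+}(1)\to\R(I)\to\G(I)\to 0$ together with $0\to\R(I)_{+}\to\R(I)\to R\to 0$ (the latter shows $\depth\R(I)_{+}\gs\depth\R(I)$, using $\depth\R(I)\ls d+1=\dim\R(I)$), and the lower bound is the depth refinement of Theorem \ref{IT} due to Huckaba and Marley. Since $s\ls d$, the first inequality yields ii) $\Rightarrow$ iii), namely $\depth\R(I)\gs\min\{s,d\}=s$, and the second yields iii) $\Rightarrow$ ii), namely $\depth\G(I)\gs\depth\R(I)-1\gs s-1$. (The degenerate case $\grade I=0$, where $\dim\R(I)<d+1$, can be disposed of directly and is not the case of interest.) I expect this comparison between $\depth\R(I)$ and $\depth\G(I)$ to be the only non-formal ingredient: unlike the $\G(I)$–$\F(I)$–$\Z(I)$ triangle, whose depths are controlled by Proposition \ref{ineqdepths}, the lower bound on $\depth\R(I)$ genuinely rests on the cited result (or an ad hoc local cohomology argument) and is the main obstacle in assembling the proof.

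For the final assertion, suppose $s=d$ and $I$ is of Goto-minimal $j$-multiplicity. By Theorem \ref{mmsimp}(2) we have $I\fm=J\fm$ for every minimal reduction $J$ of $I$; pick one with $r_J(I)=r$. Then $I^{n}\fm=I^{n-1}(I\fm)=I^{n-1}(J\fm)=JI^{n-1}\fm$ for all $n$, so in particular $J\cap I^{n}\fm=JI^{n-1}\fm$ for $2\ls n\ls r$, and every hypothesis of the theorem is met.
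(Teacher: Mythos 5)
Your proposal is correct and follows essentially the same route as the paper: reduce everything to $\depth\Z(I)\gs s$ via Lemma \ref{filtrdepth} applied to the filtration $\{I^n\fm\}_{n\gs-1}$ (after noting the hypothesis extends to all $n$), deduce i) $\Leftrightarrow$ ii) from Proposition \ref{ineqdepths}, quote Huckaba--Marley for ii) $\Leftrightarrow$ iii), and invoke Theorem \ref{mmsimp} for the Goto-minimal case. You simply spell out details the paper leaves implicit (the $n=1$ and $n>r$ cases of the intersection condition, and the explicit use of parts (5) and (6) of Proposition \ref{ineqdepths}).
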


\begin{proof}
i) $\Leftrightarrow$ ii): By Proposition \ref{ineqdepths} it suffices to show $\depth \Z(I)\gs s$. This follows by assumption and Lemma \ref{filtrdepth}.

ii) $\Leftrightarrow$ iii): This follows by \cite[3.6 and 3.10]{HM1}.

The last statement follows by Theorem \ref{mmsimp}.
\end{proof}

We obtain from Theorem \ref{FCM} the following corollary, which gives sufficient conditions for $\F(I)$ to be Cohen-Macaulay. The case $r=1$ recovers and generalizes \cite[1]{Sh} and \cite[4.2]{CZ}.

\begin{cor}\label{Theo2}
Let $R$ be a Cohen-Macaulay local ring of dimension $d$, with maximal ideal $\fm$, and infinite residue field. Let $I$ be an $R$-ideal with analytic spread $\ell(I)=s$ and reduction number $r$.  Assume $I$ satisfies $G_s$ and $AN^{-}_{s-2}$. Let $J$ be a minimal reduction of $I$ such that $r_J(I)=r$ and assume the following conditions hold:
\begin{enumerate}
\item $\depth R/I^j\gs d-s+r-j$ for every $1\ls j\ls r-1$,
\item $J\cap I^{n}\fm=JI^{n-1}\fm$ for every $2\ls n\ls r$.
\end{enumerate}
Then $\F(I)$ is Cohen-Macaulay.
\end{cor}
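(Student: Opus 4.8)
The plan is to reduce everything to Theorem \ref{FCM}. Hypothesis (2) of the corollary is precisely the filtration condition $J\cap I^{n}\fm=JI^{n-1}\fm$ for $2\ls n\ls r$ appearing in Theorem \ref{FCM}, and the remaining standing hypotheses ($R$ Cohen--Macaulay, $\ell(I)=s$, $r_J(I)=r$, $G_s$, $AN_{s-2}^-$) coincide with those of that theorem. Hence Theorem \ref{FCM} applies and it is enough to verify any one of its equivalent conditions. I would verify ii), i.e. $\depth\G(I)\gs s-1$, and this is exactly where hypothesis (1) is used.

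Fix a minimal reduction $J=(x_1,\ldots,x_s)$ with $r_J(I)=r$ whose generators satisfy Proposition \ref{resprop} (legitimate by Remark \ref{reducht} and Proposition \ref{genresprop}), and set $L=(x_1,\ldots,x_{s-1})$. By Lemma \ref{filtrdepth}, applied to the $I$-adic filtration with $h=s-1$, it suffices to prove the Valabrega--Valla equalities $L\cap I^{n}=LI^{n-1}$ for every $n\gs1$. The inclusion $\supseteq$ is clear, so the content is $L\cap I^{n}\subseteq LI^{n-1}$; writing $Q_n=(L\cap I^{n})/LI^{n-1}\hookrightarrow R/LI^{n-1}$, this amounts to showing $(Q_n)_\fp=0$ for every $\fp\in\Ass(R/LI^{n-1})$.

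For $n=1$ the equality $L\cap I=L$ is trivial. For $2\ls n\ls r$, hypothesis (1) gives $\depth R/I^{n-1}\gs d-s+r-(n-1)\gs d-s+1$; running the residual-intersection depth estimates of Section~3 (the argument behind Proposition \ref{resprop}(4),(6), now for the twisted module $LI^{n-1}$, peeling off the $x_i$ one at a time and using that each $x_i$ is a nonzerodivisor on $I^{n-1}$ together with Proposition \ref{resprop}(2),(3),(5)) should yield $\depth R/LI^{n-1}\gs d-s+1$, hence $\height\fp\ls s-1$ for every $\fp\in\Ass(R/LI^{n-1})$. For such $\fp$, Proposition \ref{resprop}(1) forces $I_\fp=R_\fp$ or $I_\fp=L_\fp$, and in either case $[L\cap I^{n}]_\fp=[LI^{n-1}]_\fp$; thus $Q_n=0$. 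Finally, for $n>r$ I would descend to the exponent $n-1$ as in the proof of Lemma \ref{filtrdepth}: since $r_J(I)=r$ we have $I^{n}=JI^{n-1}$, and from $L:x_s=L:I$ and $(L:I)\cap I^{n-1}=L\cap I^{n-1}$ (both from Proposition \ref{resprop}(3)) one gets $L\cap I^{n}=LI^{n-1}+x_s(L\cap I^{n-1})$; by induction $L\cap I^{n-1}=LI^{n-2}$, and $x_sI^{n-2}\subseteq I^{n-1}$ then gives $L\cap I^{n}\subseteq LI^{n-1}$. This establishes the equalities for all $n$, so $\depth\G(I)\gs s-1$ and $\F(I)$ is Cohen--Macaulay by Theorem \ref{FCM}.

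The main obstacle is the depth estimate $\depth R/LI^{n-1}\gs d-s+1$ for $2\ls n\ls r$: Proposition \ref{resprop} as stated only records the case of $LI$ (with the weaker bound $\min\{\depth R/I,1\}$) and of a single generator acting on $I^{n}$, so one must redo the inductive residual-intersection argument for the module $I^{n-1}$, checking that the relevant unmixedness and Artin--Nagata properties survive at each stage and tracking how $\depth R/I^{n-1}$ propagates --- this is where the precise numerics of (1), and the matching of its range $1\ls j\ls r-1$ with the exponents $I^{n-1}$ occurring for $2\ls n\ls r$, are essential. A minor point is the degenerate case $r=1$ (so that (1) and (2) are vacuous and one recovers \cite[1]{Sh} and \cite[4.2]{CZ}), where the Valabrega--Valla equalities follow purely from $I^{2}=JI$ together with Proposition \ref{resprop}(3) via the same descending induction on $n$.
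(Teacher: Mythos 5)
Your overall strategy coincides with the paper's: reduce to Theorem \ref{FCM} via condition ii), invoke Lemma \ref{filtrdepth} with $h=s-1$, and establish the Valabrega--Valla equalities $(x_1,\ldots,x_{s-1})\cap I^{n}=(x_1,\ldots,x_{s-1})I^{n-1}$ for all $n$, splitting into the range $n\ls r$ (where hypothesis (1) enters) and $n>r$ (descending induction using $I^{n+1}=JI^{n}$ and Proposition \ref{resprop}(3)). Your treatment of the case $r\ls 1$ and of the range $n>r$ matches the paper's computation essentially verbatim.

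The one genuine gap is exactly the step you flag yourself: for $2\ls n\ls r$ you reduce the equality $L\cap I^{n}=LI^{n-1}$ (with $L=(x_1,\ldots,x_{s-1})$) to the depth bound $\depth R/LI^{n-1}\gs d-s+1$, but you do not prove that bound --- you only say the residual-intersection machinery ``should yield'' it. Passing from $\depth R/I^{n-1}\gs d-s+1$ (which hypothesis (1) gives) to $\depth R/LI^{n-1}\gs d-s+1$ is not formal: one must peel off the $x_i$ one at a time, and controlling $\depth R/(x_1,\ldots,x_i)I^{n-1}$ at each stage requires simultaneously knowing the intersection equalities $(x_1,\ldots,x_i)\cap I^{n}=(x_1,\ldots,x_i)I^{n-1}$ for smaller $i$ and smaller powers, so the depth estimates and the Valabrega--Valla equalities have to be proved together by a double induction --- and that induction consumes the full range of hypothesis (1), not just the single inequality $\depth R/I^{n-1}\gs d-s+1$. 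The paper sidesteps all of this by quoting \cite[2.5 (c)]{JU}, which is precisely this simultaneous statement and whose hypotheses are exactly the inequalities in (1). Your localization step (associated primes of $R/LI^{n-1}$ have height at most $s-1$, where Proposition \ref{resprop}(1) forces $I_\fp=R_\fp$ or $I_\fp=L_\fp$) is correct once the depth bound is in hand, so the proof is completable along your lines, but as written the central step for $2\ls n\ls r$ is asserted rather than proved.
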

\begin{proof}
By Theorem \ref{FCM} it is enough to show that $\depth \G(I)\gs s-1$. Let $J=(x_1,\ldots,x_s)$ with $\{x_1,\ldots,x_s\}$ as in Proposition \ref{resprop}. The conclusion will follow from Lemma \ref{filtrdepth} once we prove that $(x_1,\ldots,x_{s-1})\cap I^n=(x_1,\ldots,x_{s-1})I^{n-1}$ for every $ n$, we will prove this by induction on $n\gs 0$. If $r\ls 1$, $\depth G(I)\gs s$ by Lemma \ref{filtrdepth}, then we can assume $r\gs 2$. By \cite[2.5 (c)]{JU} and the depth inequalities we have $(x_1,\ldots,x_{s-1})\cap I^n=(x_1,\ldots,x_{s-1})I^{n-1}$ for every $1\ls n\ls r$. Assume it holds for every $n\ls n_0$ with $n_0\gs r$, then: 
\vskip 0.1in
\noindent \begin{tabular}{llll}
&&$(x_1,\ldots,x_{s-1})\cap I^{n_0+1}$ &\\
&=& $(x_1,\ldots,x_{s-1})\cap (x_1,\ldots,x_{s})I^{n_0}$,&\hspace{-0.6cm}{      because $n_0\gs r$,}\\
&=& $(x_1,\ldots,x_{s-1})I^{n_0}+ (x_1,\ldots,x_{s-1})\cap x_{s}I^{n_0}$&\\
&=&$(x_1,\ldots,x_{s-1})I^{n_0}+ x_{s}\big((x_1,\ldots,x_{s-1}):x_{s}\cap I^{n_0}\big)$&\\
&=&$(x_1,\ldots,x_{s-1})I^{n_0}+ x_{s}\big((x_1,\ldots,x_{s-1})\cap I^{n_0}\big),$&\hspace{-0.6cm}{     by Proposition \ref{resprop}, (3), since $n_0>0$,}\\
&=&$(x_1,\ldots,x_{s-1})I^{n_0}+ x_{s}(x_1,\ldots,x_{s-1})I^{n_0-1}$&\\
&$\subseteq$& $(x_1,\ldots,x_{s-1})I^{n_0}$.& 
\end{tabular}

\end{proof}
The next corollary relates the property of minimal $j$-multiplicity and the Cohen-Macaulay property of $\F(I)$. 

\begin{cor}
Let $R$ be a Cohen-Macaulay local ring of dimension $d$ and with infinite residue field. Let $I$ be an $R$-ideal with analytic spread $\ell(I)=d$ and grade $g$. Assume $I$ satisfies $G_d$, $AN^{-}_{d-2}$, and $\depth (R/I)\gs\min\{d-g,\, 1\}$. If $I$ is of minimal $j$-multiplicity, then $\F(I)$ is Cohen-Macaulay.
\end{cor}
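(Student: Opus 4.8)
The goal is to show that if $I$ is of minimal $j$-multiplicity (under $G_d$, $AN^-_{d-2}$, and the depth hypothesis on $R/I$), then $\F(I)$ is Cohen-Macaulay. The natural route is to reduce to Corollary~\ref{Theo2} by verifying its two hypotheses. First, by Theorem~\ref{mmsimp}~(1), the minimal $j$-multiplicity assumption is equivalent to $r(I)\ls 1$. So I may pick a minimal reduction $J$ with $r_J(I)=r(I)\ls 1$; call this $r$. The point is that both numbered conditions in Corollary~\ref{Theo2} become either vacuous or nearly trivial when $r\ls 1$.

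\textbf{Checking condition (1) of Corollary~\ref{Theo2}.} This asks for $\depth R/I^j\gs d-s+r-j$ for every $1\ls j\ls r-1$, where $s=\ell(I)=d$. Since $r\ls 1$, the index range $1\ls j\ls r-1$ is empty (when $r=1$) or forces $r=0$, i.e. $I=J$ is its own reduction, in which case again there is nothing to check for $j$ in the stated range. So condition (1) holds vacuously.

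\textbf{Checking condition (2) of Corollary~\ref{Theo2}.} This asks that $J\cap I^n\fm = JI^{n-1}\fm$ for every $2\ls n\ls r$. Again, since $r\ls 1$, this range of $n$ is empty, so the condition holds vacuously.

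\textbf{Conclusion.} With both hypotheses of Corollary~\ref{Theo2} satisfied and $\ell(I)=s=d$, Corollary~\ref{Theo2} applies directly and gives that $\F(I)$ is Cohen-Macaulay. The main (and really only) subtlety is making sure that the equivalence "minimal $j$-multiplicity $\iff r(I)\ls 1$" from Theorem~\ref{mmsimp}~(1) is available — and it is, precisely because the hypotheses here ($R$ Cohen-Macaulay with infinite residue field, $\ell(I)=d$, $I$ satisfying $G_d$ and $AN^-_{d-2}$, and $\depth R/I\gs\min\{d-g,1\}$) are exactly those of Theorem~\ref{mmsimp}. I do not anticipate any real obstacle: the entire argument is a short deduction chaining Theorem~\ref{mmsimp}~(1) into Corollary~\ref{Theo2} with the reduction number bounded by $1$, which collapses both of the latter's hypotheses to empty statements.
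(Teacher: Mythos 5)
Your proposal is correct and follows exactly the paper's own argument: invoke Theorem~\ref{mmsimp}~(1) to get $r\ls 1$, then apply Corollary~\ref{Theo2}, whose two hypotheses are vacuous in that range. The only difference is that you spell out the vacuity of the conditions, which the paper leaves implicit.
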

\begin{proof}
By Theorem \ref{mmsimp} we have $r=1$ and then the conclusion follows by Corollary \ref{Theo2}.
\end{proof}

An ideal $I$ of analytic spread $\ell(I)=d$ is said to be of {\it almost minimal $j$-multiplicity} if for $d$ general elements $x_1,\ldots, x_d$ of $I$ the ideal $\overline{I}$ of $\overline{R}=R/(x_1,\ldots,x_{d-1}):I^{\infty}$ is of almost minimal multiplicity, i.e., $\ll(\overline{ I^2}/\overline{x_d I})=1$ (cf. \cite{PX1}).

In the next proposition we generalize \cite[3.4]{JPV} to the class of ideals treated in this paper. It provides a characterization of the ideals of almost minimal $j$-multiplicity with Cohen-Macaulay fiber cone. Recall the Hilbert series of $\F(I)$ is defined by $$HS_{\F(I)}(t)=\sum_{n=0}^{\infty}\mu(I^n)t^n.$$

\begin{prop}\label{Theo4}
Let $R$ be a Cohen-Macaulay local ring of dimension $d>0$ and with infinite residue field. Let $I$ be an $R$-ideal with analytic spread $\ell(I)=d$, grade $g$, and reduction number $r$. Assume $I$ satisfies $G_d$, $AN^{-}_{d-2}$, and $\depth (R/I)\gs\min\{d-g,\, 1\}$. If $I$ is of almost minimal $j$-multiplicity, then the following are equivalent:
\begin{enumerate}
\item[i)] $\F(I)$ is Cohen-Macaulay,
\item[ii)]The Hilbert series of $\F(I)$ is 
$$HS_{\F(I)}(t)=\frac{1+(\mu(I)-d)t+t^2+\cdots+t^r}{(1-t)^d},$$
\item[iii)]  $I^2\fm=JI\fm$, for one minimal reduction $J$ of $I$.
\end{enumerate}
\end{prop}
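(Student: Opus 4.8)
The plan is to reduce everything, via Proposition~\ref{jmultdim1} and the well-definedness results, to the one-dimensional $\fm$-primary case and then invoke the characterization from \cite[3.4]{JPV}, transporting the statements back up through the depth inequalities of Proposition~\ref{ineqdepths} and Lemma~\ref{filtrdepth}. First I would pick general elements $x_1,\ldots,x_d$ in $I$ with the properties of Proposition~\ref{resprop} (via Proposition~\ref{genresprop} and Remark~\ref{reducht}), so that $J=(x_1,\ldots,x_d)$ is a minimal reduction with $r_J(I)=r$. The key point established in the proof of Theorem~\ref{mmsimp} is that the residual assumptions force $(x_1,\ldots,x_{d-1}):I^\infty \cap I^n = (x_1,\ldots,x_{d-1})I^{n-1}$ for $n=1,2$ and $(x_1,\ldots,x_{d-1}):I^\infty \cap I\fm = (x_1,\ldots,x_{d-1})\fm$; I expect the same argument extends to show $(x_1,\ldots,x_{d-1}):I^\infty \cap I^n\fm = (x_1,\ldots,x_{d-1})I^{n-1}\fm$ for all $n$, so that passing to $\overline{R}=R/(x_1,\ldots,x_{d-1}):I^\infty$ identifies $\F(I)$ with $\F(\overline{I})$ and identifies the relevant Hilbert functions and the condition $I^n\fm = JI^{n-1}\fm$ with its image $\overline{I}^n\overline{\fm}=\overline{x_d}\,\overline{I}^{n-1}\overline{\fm}$ in the one-dimensional ring.

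The core of the equivalence is then the known one-dimensional statement. Since $I$ is of almost minimal $j$-multiplicity, $\overline{I}$ is of almost minimal multiplicity in the Cohen-Macaulay one-dimensional local ring $\overline{R}$, and \cite[3.4]{JPV} gives exactly the equivalence of: $\F(\overline{I})$ Cohen-Macaulay; the stated form of the Hilbert series; and $\overline{I}^2\overline{\fm}=\overline{x_d}\,\overline{I}\,\overline{\fm}$. For the implication iii)~$\Rightarrow$~i) at the level of $R$, I would argue that $I^2\fm=JI\fm$ together with the hypothesis of almost minimal $j$-multiplicity forces $J\cap I^n\fm = JI^{n-1}\fm$ for all relevant $n$: the almost-minimality bounds the colengths tightly, and a descending-induction computation of the type carried out in Lemma~\ref{filtrdepth} and Corollary~\ref{Theo2} (using Proposition~\ref{resprop}, (3) to move the colon ideal past $x_s$) propagates the equality $I^2\fm=JI\fm$ upward to all powers; then Theorem~\ref{FCM} yields i) $\Leftrightarrow$ $\depth\G(I)\gs d-1$, and the almost-minimal-$j$-multiplicity hypothesis (via the generalization of the Rossi--Valla/Wang bound, or a direct $\Z(I)$ computation) gives $\depth\G(I)\gs d-1$ unconditionally, closing the loop. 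For i)~$\Rightarrow$~ii), Cohen-Macaulayness of $\F(I)$ lets one compute $HS_{\F(I)}(t)$ after killing a maximal regular sequence coming from the minimal reduction, reducing to the Hilbert series of the Artinian algebra $\F(\overline{I})/(\overline{x_d})\F(\overline{I})$, whose numerator has the asserted shape precisely because $\mu(\overline{I}^n)$ drops by one at each step from $\mu(I)-d+1$ down to $1$ — this is where the almost-minimal condition pins the numerator to $1+(\mu(I)-d)t+t^2+\cdots+t^r$. Finally ii)~$\Rightarrow$~iii) is an Euler-characteristic/length bookkeeping: the specific numerator forces the colength $\ll(\overline{I}^2\overline{\fm}/\overline{x_d}\,\overline{I}\,\overline{\fm})$ to vanish.

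The main obstacle I anticipate is the bookkeeping in the implication i)~$\Rightarrow$~ii): one must check carefully that Cohen-Macaulayness of $\F(I)$ really does let you reduce modulo the images of $x_1,\ldots,x_d$ in $[\F(I)]_1$ as a regular sequence (this uses that a general minimal reduction's generators map to a maximal regular sequence in a Cohen-Macaulay standard graded algebra, as discussed in the introduction), and that after this reduction the Hilbert function of the resulting Artinian algebra is exactly $1,\mu(I)-d,1,\ldots,1,0$ with $r-1$ ones — the delicate part being that the ``$1$''s cannot degenerate earlier, which is where almost minimal $j$-multiplicity (not just the Cohen-Macaulay hypothesis) is essential. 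A secondary technical point is verifying that the filtration $\{I^n\fm\}$ specializes correctly modulo $(x_1,\ldots,x_{d-1}):I^\infty$; I would handle this exactly as in the proof of Theorem~\ref{mmsimp}, checking the needed equalities locally at the associated primes of $R/(x_1,\ldots,x_{d-1})I^{n-1}\fm$ and using Proposition~\ref{resprop} together with the $\depth R/I\gs\min\{d-g,1\}$ hypothesis.
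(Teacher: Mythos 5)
Your overall strategy --- collapse everything to the one-dimensional ring $\overline{R}=R/(x_1,\ldots,x_{d-1}):I^{\infty}$, apply \cite[3.4]{JPV} there, and transport the three conditions back up --- has a genuine gap at its center, and it is precisely the point the paper's proof is designed to get around. The transport requires the equalities $(x_1,\ldots,x_{d-1}):I^{\infty}\cap I^{n}\fm=(x_1,\ldots,x_{d-1})I^{n-1}\fm$ for $n\gs 2$ (already $n=2$ is the whole content of condition iii)), and these do \emph{not} follow from ``the same argument'' as in Theorem \ref{mmsimp}: there the case $n=1$ is free because $x_1,\ldots,x_{d-1}$ are part of a minimal generating set of $I$, and the cases $I,I^2$ use the depth hypothesis on $R/I$ via Proposition \ref{resprop}(4); for $I^2\fm$ there is no analogous depth input, and in fact the equality $(x_1,\ldots,x_{d-1})\cap I^2\fm=(x_1,\ldots,x_{d-1})I\fm$ is essentially the statement that $x_1^*,\ldots,x_{d-1}^*$ act as a regular sequence on $\Z(I)_{\gs 1}$ in low degrees --- something the paper only extracts \emph{after} assuming $\F(I)$ is Cohen--Macaulay, via Proposition \ref{ineqdepths}. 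So the identification of $\F(I)$ with (a polynomial extension of) $\F(\overline{I})$ is direction-dependent and cannot be set up once and for all at the start; assuming it makes the equivalence i) $\Leftrightarrow$ iii) nearly circular. Relatedly, even granting the equalities, Cohen--Macaulayness of the one-dimensional $\F(\overline{I})$ only lifts to $\F(I)$ if you already know the images $x_1',\ldots,x_{d-1}'$ form a regular sequence on $\F(I)$, which is again what is to be proved.

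The paper avoids this by never passing to the saturation for this proposition. For i) $\Rightarrow$ iii) it inducts on $d$ one general element at a time: $\depth_{\mathfrak{N}}\G(I)_{\gs 1}>0$ (from \cite[4.10]{PX1}) plus Valabrega--Valla and Proposition \ref{resprop}(6) give $(x_1)\cap I^{n+1}=x_1I^n$, hence $\F(I/(x_1))=\F(I)/(x_1')$; after the inductive step one still needs $x_1\fm\cap I^2\fm=x_1I\fm$ to descend from $R/(x_1)$ back to $R$, and this is exactly where $\depth_{\mathfrak{N}}\Z(I)_{\gs 1}>0$ (Proposition \ref{ineqdepths}, using CM-ness of $\F(I)$) enters. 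Your iii) $\Rightarrow$ i) sketch is essentially right and matches the paper (propagate $I^2\fm=JI\fm$ to all powers, then Theorem \ref{FCM} plus $\depth\G(I)\gs d-1$ from \cite[4.10]{PX1}), except that you must first upgrade ``one minimal reduction'' to a reduction $J$ with $r_J(I)=r$ by the specialization argument of Lemma \ref{onlyone}; and i) $\Leftrightarrow$ ii) is handled in the paper by the key length computation $\ll(I^2/JI)=1$ together with \cite[Theorem 5.5]{RV1}, close to what you describe. I would rework the write-up around the inductive scheme rather than the one-shot reduction to dimension one.
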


\begin{proof}
By Proposition \ref{resprop}, (2), we can factor out $K_0=\ann I$ and assume $g>0$, as in the proof of Lemma \ref{filtrdepth}. If $g=d$ then $I$ is an $\fm$-primary ideal and this coincides with \cite[3.4]{JPV}. We can assume $\depth R/I\gs 1$. Let $\mathfrak{N}=\G(I)_{\gs 1}$, by \cite[4.10]{PX1} we have $\depth \G(I)\gs d-1$ and $\depth_{\mathfrak{N}} \G(I)_{\gs 1}>0$.

 i) $\Leftrightarrow$ ii): Since $I$ satisfies $G_d$ and $AN_{d-2}^-$, we have $\ll(I^2/(x_1,\ldots, x_d)I)=1$ for $d$ general elements in $I$ (cf. proof of Theorem \ref{mmsimp}), and then this equivalence follows exactly as in the proof of \cite[Theorem 5.5]{RV1}.

 i) $\Rightarrow$ iii): We will proceed by induction on $d$ to show the equality for $J$, an ideal generated by $d$ {\it sequentially general elements} $x_1,\ldots, x_d$ in $I$ (cf. \cite{NU}). The case $d=1$ was covered by the $\fm$-primary case at the beginning of this proof, then we can assume $d\gs 2$. There is a general regular element $x_1\in I$ such that $x_1^*\in I/I^2=[\G(I)]_1$ is regular in $\G(I)_{\gs 1}$. By the Valabrega-Valla criterion we obtain $x_1I\cap I^{n+2}=x_1I^{n+1}$ for every $n\gs 0$. By Proposition \ref{genresprop}, (6), we have $(x_1)\cap I^2=x_1I$, and then we conclude $(x_1)\cap I^{n+1}=x_1I^{n}$ for every $n\gs 0$. In particular this implies $\depth_{\mathfrak{N}} \G(I)> 0$. Let $\overline{R}=R/(x_1)$, it follows that $\F(\overline{I})=\F(I)/(x_1')$ where $x_1'\in I/I\fm=[\F(I)]_1$, and then $\F(\overline{I})$ is also Cohen-Macaulay. Since the ideal $\overline{I}$ is of almost minimal $j$-multiplicity satisfying $G_{d-1}$, $AN_{d-3}^-$, and $\depth \overline{R}/\overline{I}=\depth R/I>0$, by induction hypothesis we obtain $\overline{I^2\fm}=\overline{(x_2,\ldots, x_d)I\fm}$, then $$I^2\fm=(x_2,\ldots, x_d)I\fm +(x_1)\cap I^2\fm=(x_2,\ldots, x_d)I\fm +x_1\fm \cap I^2\fm.$$
By Proposition \ref{ineqdepths} we have $\depth_{\mathfrak{N}}\Z(I)_{\gs 1}>0$, then we can assume $x_1^*$ is also regular in $\Z(I)_{\gs 1}$ and again by Valabrega-Valla we have $x_1\fm\cap I^2\fm=x_1 I\fm.$ Hence, $I^2\fm=(x_1,\ldots, x_d)I\fm$ as desired.

 iii) $\Rightarrow$ i): We proceed as in the proof of Lemma \ref{onlyone} to prove that $I^2\fm=(x_1,\ldots,x_d)I\fm$ for $d$ general elements in $I$. In particular this equality holds for a minimal reduction $J$ such that $r_J(I)=r$ (see \cite[8.6.6]{SH}). The statement now follows by Theorem \ref{FCM}.
\end{proof}

\begin{cor}\label{corminalm}
 Let $R$ be a Cohen-Macaulay local ring of dimension $d>0$ and with infinite residue field. Let $I$ be an $R$-ideal with analytic spread $\ell(I)=d$ and grade $g$. Assume $I$ satisfies $G_d$, $AN^{-}_{d-2}$, and $\depth (R/I)\gs\min\{d-g,\, 1\}$. If $I$ is of Goto-minimal $j$-multiplicity and of almost minimal $j$-multiplicity, then $\F(I)$ is Cohen-Macaulay.
\end{cor}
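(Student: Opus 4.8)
The plan is to derive this directly from Proposition \ref{Theo4}. The hypotheses of that proposition --- $d>0$, $R$ Cohen-Macaulay with infinite residue field, $\ell(I)=d$, $G_d$, $AN^{-}_{d-2}$, $\depth(R/I)\gs\min\{d-g,1\}$, and $I$ of almost minimal $j$-multiplicity --- are all among the assumptions here, so it suffices to verify one of the three equivalent conditions i)--iii) of Proposition \ref{Theo4}. I would check condition iii), namely that $I^2\fm=JI\fm$ for some minimal reduction $J$ of $I$.

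To obtain iii) I would feed the Goto-minimal $j$-multiplicity hypothesis through Theorem \ref{mmsimp}. Since $I$ has analytic spread $d$ and satisfies $G_d$ and $AN^{-}_{d-2}$, part (2) of that theorem says that $I$ being of Goto-minimal $j$-multiplicity is equivalent to $I\fm=J\fm$ for one, and hence every, minimal reduction $J$ of $I$. Fixing such a $J$ and multiplying the equality $I\fm=J\fm$ by $I$, using that ideal multiplication is commutative and associative, gives
\[
I^2\fm=I(I\fm)=I(J\fm)=(IJ)\fm=(JI)\fm=J(I\fm)=JI\fm.
\]
Thus condition iii) of Proposition \ref{Theo4} holds, and the proposition then yields that $\F(I)$ is Cohen-Macaulay.

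There is essentially no technical obstacle in this argument: all the real work has already been carried out in Theorem \ref{mmsimp} and Proposition \ref{Theo4}. The only points worth keeping in mind are that the degenerate case $\mu(I)=d$ (where $\core(I)=I$ and $I$ is its own unique minimal reduction) makes iii) trivially true and so causes no difficulty, and that the hypothesis $d>0$ required to invoke Proposition \ref{Theo4} is in force by assumption.
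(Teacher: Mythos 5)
Your proof is correct and is essentially the argument the paper intends (the corollary is stated without proof as an immediate consequence of Proposition \ref{Theo4}): Theorem \ref{mmsimp}(2) converts Goto-minimal $j$-multiplicity into $I\fm=J\fm$, multiplying by $I$ gives condition iii) of Proposition \ref{Theo4}, and that proposition yields the Cohen-Macaulayness of $\F(I)$.
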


The following lemma gives an upper bound for the reduction number of $I$ when $\G(I)$ is Cohen-Macaulay. This result will be useful in the proofs of Theorem \ref{Theo1} and Corollary \ref{ineqbound} which in turn can be seen as generalizations of \cite[2.7]{Go1} and \cite[2.8]{Go1}. 

\begin{lemma}\label{boundrn}
Let $R$ be a Cohen-Macaulay local ring, with maximal ideal $\fm$, and infinite residue field. Let $I$ be an $R$-ideal of analytic spread $\ell(I)=s$, grade $g$, and reduction number $r$. Assume $I$ satisfies $G_s$ and $AN_{s-2}^-$. If $\G(I)$ is Cohen-Macaulay and $I\fm =J\fm $ for $J$ a minimal reduction of $I$, then $$r\ls s-g+1.$$
\end{lemma}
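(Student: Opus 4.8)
The plan is to reduce to an Artinian situation by successively factoring out general elements, exploiting that all the hypotheses specialize well. First I would reduce to the case $g = 0$: by Proposition \ref{resprop}, (2), a regular sequence $x_1,\ldots,x_g$ can be chosen as part of the generating set from Proposition \ref{resprop}, and factoring it out preserves Cohen-Macaulayness of $R$, the conditions $G_s$ and $AN_{s-2}^-$, the analytic spread (now $s - g$), the Cohen-Macaulayness of $\G(I)$ (via the Valabrega--Valla criterion, since general $x_i^*$ form a regular sequence on $\G(I)$), the reduction number $r$, and the equality $I\fm = J\fm$. As in the proof of Lemma \ref{filtrdepth}, after also factoring out $K_0 = \ann I$ we may assume $\grade I > 0$ on the new ring while keeping all hypotheses; so it suffices to prove $r \le s + 1$ when $g = 0$, i.e.\ when $\dim R = d$ but now $\grade I > 0$ and $\ell(I) = s$. (Care is needed bookkeeping the shift: after factoring $g$ elements we want $r \le (s-g) - 0 + 1 = s - g + 1$, which is the claim.)

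Next I would cut down the dimension of $R$ to make $I$ equimultiple, using Proposition \ref{GsANs1}. By Lemma \ref{onlyone}, (1), $I\fm = (y_1,\ldots,y_s)\fm$ for general $y_1,\ldots,y_s \in I$; the goal is to reach $\ell(I) = s = \dim R$, i.e.\ an $\fm$-primary ideal, by factoring out $d - s$ general elements $z$ of $\fm$. For each such $z$, Proposition \ref{GsANs1} gives that $\overline{I}$ in $R/(z)$ satisfies $G_s$ and (since $s < d$ at each stage) $AN_s^-$, hence $AN_{s-2}^-$. One must check that $\G(\overline{I})$ remains Cohen-Macaulay, that $\ell(\overline{I}) = s$, that the reduction number does not increase, and that $\overline{I\fm} = \overline{J\fm}$ — here the hypothesis $\G(I)$ Cohen-Macaulay plus $G_s$ is exactly what lets general elements of $\fm$ (equivalently, their initial forms, or the relevant superficial conditions) behave well; one can invoke Theorem \ref{SUV1} together with the characterization $a(\G(I)) = \max\{-g, r-s\}$ and Sally's machine / the Valabrega--Valla criterion. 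After $d - s$ such steps we are reduced to the case where $R$ is Cohen-Macaulay of dimension $s$, $I$ is $\fm$-primary with $\G(I)$ Cohen-Macaulay, and $I\fm = J\fm$ for $J$ a (parameter ideal) minimal reduction.

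In this $\fm$-primary case the statement is precisely \cite[2.7]{Go1} (or can be extracted from it): when $\G(I)$ is Cohen-Macaulay, a general minimal reduction $J$ is generated by a regular sequence whose initial forms are a regular sequence on $\G(I)$, so $r = r_J(I) = a(\G(I)) + s$ by Theorem \ref{SUV1} with $g = s$ (giving $a(\G(I)) = \max\{-s, r - s\}$); and the condition $I\fm = J\fm$ forces $r - s \le 1$, i.e.\ $r \le s + 1$. Concretely, $\overline{\G(I)} := \G(I)/(x_1^*,\ldots,x_s^*) \cong \G(\overline{I})$ where $\overline{R} = R/J$ is Artinian; the hypothesis $I\fm = J\fm$ translates into $\overline{I}\,\overline{\fm} = 0$, so $\overline{I}^2 \subseteq \overline{I}\,\overline{\fm}\cdot(\text{stuff}) $ — more precisely $\overline{\fm}\,\overline{I} = 0$ gives $\overline{I}^{\,2} = 0$ unless $\overline{I} = \overline{\fm}$, and in either case $I^2 \subseteq JI + I^2\fm$ leads, by Nakayama on the graded pieces, to $I^2 = JI + (\text{degree} \ge 3)$ collapsing, hence $r \le 2 = s - g + 1$ when $s=1$, and the general $s$ follows from the reduction. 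I would organize it so that the final sentence simply reads: "this is \cite[2.7]{Go1}."

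The main obstacle I anticipate is the dimension-reduction step from arbitrary $s \le d$ down to the equimultiple case: one must verify that factoring a general element of $\fm$ simultaneously preserves (a) Cohen-Macaulayness of $\G(I)$, (b) the value (not just an inequality) of the reduction number, and (c) the equality $I\fm = J\fm$ for an appropriate minimal reduction. Item (a) is the delicate one — it requires that a general $z \in \fm$ have initial form $z^* \in [\Z(I)]_1$ or the relevant filtration regular, which is where $\G(I)$ Cohen-Macaulay combined with $G_s$ and $AN_{s-2}^-$ does the work, likely via Proposition \ref{ineqdepths} applied to $\Z(I)$ together with the $a$-invariant bound from Theorem \ref{SUV1}. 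If that step proves technically awkward, an alternative is to work entirely inside $\G(I)$: pass to $\G(I)$ modulo a general linear system of parameters coming from a minimal reduction, observe the quotient is $\G(\overline I)$ for $\overline I$ $\fm$-primary in an Artinian-mod-parameters ring, and read off $r$ and the minimal-multiplicity-type condition there directly, thereby invoking the $\fm$-primary result in one stroke.
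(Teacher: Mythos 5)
Your second and third steps do not connect, and this is where the proof breaks. Cutting $\dim R$ down to $s$ by factoring out $d-s$ general elements of $\fm$ does \emph{not} make $I$ into an $\fm$-primary ideal: a general $z\in\fm$ is regular on $R$ and on $R/I$, so it preserves $\height I$ and $\grade I$, and after $d-s$ steps you arrive at an ideal with $\ell(I)=\dim R=s$ but still of grade $g$. When $g<s$ such an ideal is not $\fm$-primary --- this is precisely the class of ideals the whole paper is about (e.g.\ the height-two ideals of Example \ref{ex4} with $\ell(I)=d$) --- so \cite[2.7]{Go1} never applies. Your fallback of killing the initial forms $x_1^*,\ldots,x_s^*$ of a minimal reduction inside $\G(I)$ fails for the same reason: when $g<s$ these elements are not even a system of parameters of $\G(I)$, since $\G(I)/(x_1^*,\ldots,x_s^*)\G(I)$ surjects onto $R/I$ and hence has dimension at least $d-g>d-s$. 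Thus the case $s=d$, $g<d$, which is the heart of the lemma, is left without an argument. The paper handles it directly: $I\fm=J\fm$ forces $I^{n+1}\subseteq J^n\fm\subseteq J^n$ for all $n$, which yields an exact sequence of graded modules $0\to\C\,[1]\to\G(J)\to\G(I)\to\C\to 0$ with $\C=\bigoplus_n I^n/J^n$; since $\G(J)$ and $\G(I)$ are both Cohen--Macaulay, chasing local cohomology through the two induced short exact sequences gives $a(\G(I))\ls a(\G(J))+1=-g+1$, and Theorem \ref{SUV1} converts this into $r\ls s-g+1$. The only dimension reduction in the paper is the one you propose second, from $s<d$ to $s=d$ via general elements of $\fm$ and Proposition \ref{GsANs1}; that part of your outline is sound.

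Your preliminary reduction to grade zero is also unsound. If factoring out a regular sequence $x_1,\ldots,x_g\subseteq I$ with $\G(I)$-regular initial forms and then passing to $R$ modulo the annihilator of the image ideal preserved all the hypotheses, the (true) lemma applied to the resulting ideal --- which has analytic spread $s-g$ and grade $g''\gs 1$ --- would give $r\ls(s-g)-g''+1\ls s-g$, contradicting Corollary \ref{ineqbound} and Example \ref{ex4}, where $r=s-g+1$ is attained. What actually breaks is the Cohen--Macaulayness of the associated graded ring under the annihilator quotient: for the ideals of Proposition \ref{superex} with $r=s-g+1\gs 2$, your construction produces an $\fm$-primary ideal of Goto-minimal multiplicity with reduction number at least $2$, whose associated graded ring cannot be Cohen--Macaulay by Goto's theorem. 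So by the time you reach the situation where you want to quote \cite[2.7]{Go1}, the hypothesis you need to quote it is gone.
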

\begin{proof}
By Lemma \ref{onlyone} $I\fm=(x_1,\ldots,x_s)\fm$ for general $x_1,\ldots, x_s$ in $I$.  Write $J=(x_1,\ldots, x_s)$. Let $d=\dim R$, if $d=0$, $I^2\subset J=(0)$, hence $r\ls 1$. We can then assume $d>0$. 
 
Also, we can assume $I\neq J$, otherwise $r=0$ and the result follows. After factoring out $K_0=\ann I$ as in the proof of Lemma \ref{filtrdepth}, we can also assume $g>0$. Indeed, if $g=0$, from Proposition \ref{resprop}, (3), $K_0\cap I=0$ then $r$ and $s$ are preserved, whereas  $g$ increases. 

We will first give the proof in the case $s=d$. From $I\fm=J\fm$ we have $I^{n+1}\subset J^n$ for every $n$ and then the following is an exact sequence of $\R(J)$-modules
\[
\begin{tikzcd}
0 \arrow{r} &\C\,[1] \arrow{r} &\G(J) \arrow{r}{\alpha} &\G(I) \arrow{r}{\beta} &\C \arrow{r} &0,
\end{tikzcd}
\]
where $\C$ is the $R(J)$-graded module $\bigoplus_{n=0}^{\infty}I^n/ J^n$. Let $\N=\bigoplus_{n=0}^{\infty}J^n/I^{n+1}$ be the $\R(J)$-module $\Im \alpha =\Ker \beta$. This exact sequence induces the following two short exact sequences

\[
\begin{tikzcd}
0 \arrow{r} &\C\,[1] \arrow{r} &\G(J) \arrow{r} &\N \arrow{r} &0,
\end{tikzcd}
\]
\[
\begin{tikzcd}
0 \arrow{r}  &\N \arrow{r} &\G(I) \arrow{r} &\C \arrow{r} &0.
\end{tikzcd}
\]
By Remark \ref{reducht} and \cite[1.12]{U1}, $J$ satisfies $G_d$ and $AN_{d-2}^-$. Since $r(J)=0$ the algebra $\G(J)$ is Cohen-Macaulay (see for example Lemma \ref{filtrdepth}). Then if $\mathfrak{M}$ is the irrelevant maximal ideal of $\R(J)$, we have $\HH{i}{\mathfrak{M}}{\G(J)}=\HH{i}{\mathfrak{M}}{\G(I)}=0$ for $0\ls i\ls d-1$.

From the first exact sequence we get $\HH{i}{\fM}{\N}\cong\HH{i+1}{\fM}{\C\,[1]}$, for $0\ls i\ls d-2$ and from the second exact sequence we get $\HH{i}{\fM}{\C}\cong\HH{i+1}{\fM}{\N}$, for $0\ls i\ls d-2$. 

We also get $\HH{0}{\fM}{\C}=\HH{0}{\fM}{\N}=0$, then $$\HH{i}{\fM}{\C}=\HH{i}{\fM}{\N}=0,$$ for every $0\ls i\ls d-1$. Again from the exact sequences above, we obtain

\[
\begin{tikzcd}
0 \arrow{r} &\HH{d}{\fM}{\C\,[1]} \arrow{r} &\HH{d}{\fM}{\G(J)} \arrow{r} &\HH{d}{\fM}{\N}\arrow{r} &0
\end{tikzcd}
\]
\[
\begin{tikzcd}
0  \arrow{r} &\HH{d}{\fM}{\N} \arrow{r} &\HH{d}{\fM }{\G(I)} \arrow{r} &\HH{d}{\fM}{\C}\arrow{r} &0
\end{tikzcd}
\]
are exact. Set $a_d(\C)=\max\{n:[\HH{d}{\fM}{\C}]_n\neq 0\}$ and $a_d(\N)=\max\{n:[\HH{d}{\fM}{\N}]_n\neq 0\}$, from the latter exact sequences we obtain $$a (\G(I))=\max\{a_d (\C), a_d (\N)\}\ls \max\{a_d (\C\,[1]), a_d (\N)\}+1=a (\G(J))+1=-g+1,$$ where the last equality follows from $r(J)=0$ and Theorem \ref{SUV1}. Therefore, the inequality $r\ls d-g+1$ follows again by Theorem \ref{SUV1}. 

For the general case consider $\omega(I)=d-s$, we will prove the result by induction on $\omega(I)$. The case $\omega(I)=0$ was given above. Suppose $\omega(I)=d-s>0$ and that the result holds for any ideal with smaller values of $\omega(I)$. Since $\G(I)$ is Cohen-Macaulay and $\dim \G(I)/\fm\G(I)=\dim \F(I)=s$, we have $\grade\fm \G(I)=d-s$. Since the residue field $k$ is infinite, a general element $x$ in $\fm$ is not in any associated prime of $R$ and is not in any of the ideals that are the degree zero component of the associated primes of $\G(I)$. Therefore, $x$ is regular in $R$ and $x^*\in R/I=[\G(I)]_0$ is regular in $\G(I)$. Let  $\overline{R}=R/(x)$, then $\G(\overline{I})=\G(I)/(x^*)$ is Cohen-Macaulay. By \cite[3.3]{EH}, $\depth R/I\gs d-s>0$, then by Proposition \ref{GsANs1}, $\overline{I}$ satisfies $G_s$ and $AN_{s-2}^-$. The ideal $\overline{I}$ has the same $g$, $s$, and $r$ than $I$ and furthermore $\omega(\overline{I})=(d-1)-s=\omega(I)-1$, then the conclusion follows by induction hypothesis.
\end{proof}

We are now ready to present the second main theorem of this section.

\begin{thm}\label{Theo1}
Let $R$ be a Cohen-Macaulay local ring of dimension $d$, with maximal ideal $\fm$, and infinite residue field. Let $I$ be an $R$-ideal with analytic spread $\ell(I)=s$, grade $g$, and reduction number $r$. Assume $I$ satisfies $G_s$, $AN_{s-2}^-$, and $I\fm=J\fm$ for $J$ a minimal reduction of $I$. Consider the following statements 
\begin{enumerate}
\item[i)]  $\R(I)$ is Cohen-Macaulay (when $g\gs 2$),
\item[ii)] $\G(I)$ is Cohen-Macaulay,
\item[iii)] $\F(I)$ is Cohen-Macaulay and $a(\F(I))\ls -g+1,$
\item[iv)] $r\ls s-g+1.$
\end{enumerate}
Then \textnormal{i) } $\Leftrightarrow$ \textnormal{ ii) } $\Rightarrow$ \textnormal{ iii) } $\Rightarrow$ \textnormal{ iv)}. If in addition $\depth R/I^j\gs d-g-j+1$ for every $1\ls j\ls s-g+1$, then all the statements are equivalent.

In particular, this holds if $s=d$ and $I$ is of Goto-minimal $j$-multiplicity satisfying $G_d$ and $AN^{-}_{d-2}$. 
\end{thm}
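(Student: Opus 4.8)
The plan is to establish the unconditional chain i) $\Leftrightarrow$ ii) $\Rightarrow$ iii) $\Rightarrow$ iv) by means of a formula for $a(\F(I))$, and then to derive iv) $\Rightarrow$ ii) from the extra depth hypotheses. For i) $\Leftrightarrow$ ii) (where $g\gs 2$): by Theorem \ref{IT}, i) is equivalent to ``$\G(I)$ Cohen--Macaulay and $a(\G(I))<0$''. If $\G(I)$ is Cohen--Macaulay then Lemma \ref{boundrn} gives $r\ls s-g+1$, so Theorem \ref{SUV1} yields $a(\G(I))=\max\{-g,\,r-s\}\ls\max\{-g,\,1-g\}<0$ since $g\gs 2$; this proves ii) $\Rightarrow$ i), and i) $\Rightarrow$ ii) is immediate from Theorem \ref{IT}.

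The key tool for the remaining implications is the claim that, \emph{whenever $\F(I)$ is Cohen--Macaulay}, $a(\F(I))=r-s$. To prove it, replace $J$ by a general minimal reduction $J=(x_1,\dots,x_s)$; then $r_J(I)=r$, and by Lemma \ref{onlyone}(1) (a general minimal reduction both realizes $r$ and inherits $I\fm=J\fm$) we still have $I\fm=J\fm$. Now $\F(I)$ is standard graded over the field $R/\fm$ and has dimension $s$, the images of $x_1,\dots,x_s$ in $[\F(I)]_1$ form a homogeneous system of parameters, hence a regular sequence on the Cohen--Macaulay ring $\F(I)$, and iterating the identity $a(M/yM)=a(M)+1$ for a degree-one nonzerodivisor $y$ on a Cohen--Macaulay graded module $M$ gives $a(\F(I))+s=a\big(\F(I)/(x_1,\dots,x_s)\F(I)\big)$. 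Since $I\fm=J\fm$ yields $I^n\fm=JI^{n-1}\fm$ for all $n\gs 1$ (multiply by $I^{n-1}$) and $I^n\fm\subseteq I\fm\subseteq J$, we get $\F(I)/(x_1,\dots,x_s)\F(I)=\bigoplus_n I^n/JI^{n-1}$, an Artinian graded ring whose top nonzero degree is $\max\{n:I^n\neq JI^{n-1}\}=r_J(I)=r$; this proves $a(\F(I))=r-s$.

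Granting this, ii) $\Rightarrow$ iii): from $\G(I)$ Cohen--Macaulay we get $\depth\G(I)=d\gs s-1$, and the equalities $J\cap I^n\fm=I^n\fm=JI^{n-1}\fm$ verify the hypothesis of Theorem \ref{FCM}; hence $\F(I)$ is Cohen--Macaulay, so $a(\F(I))=r-s$, and $r\ls s-g+1$ (Lemma \ref{boundrn}) forces $a(\F(I))\ls -g+1$. And iii) $\Rightarrow$ iv) is immediate, since $\F(I)$ Cohen--Macaulay gives $r-s=a(\F(I))\ls -g+1$. For iv) $\Rightarrow$ ii) under $\depth R/I^j\gs d-g-j+1$ for $1\ls j\ls s-g+1$: these bounds are exactly condition (1) of Corollary \ref{Theo2} once $r\ls s-g+1$ is known (as $d-s+r-j\ls d-g-j+1\Leftrightarrow r\ls s-g+1$), and condition (2) holds because $I\fm=J\fm$; hence $\F(I)$ is Cohen--Macaulay and $\depth\G(I)\gs s-1$ by Theorem \ref{FCM}. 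To upgrade this to $\G(I)$ Cohen--Macaulay, I would reduce to $s=d$: the depth hypotheses give $\depth R/I^j>0$ for $j$ in range (since $d-g-j+1\gs d-s\gs 1$ when $s<d$), so $d-s$ general elements of $\fm$ are regular on $R$ and on each $R/I^j$, and, as in the proof of Lemma \ref{boundrn}, factoring them out preserves $R$ Cohen--Macaulay, $G_s$, $AN^-_{s-2}$, the values of $g,s,r$, and the depth hypotheses. When $s=d$ one has $I^{n+1}\subseteq J^n$ for all $n$ (from $I^2\subseteq I\fm=J\fm$ and induction, in fact $I^{n+1}\subseteq J^n\fm$), giving the exact sequence $0\to\C[1]\to\G(J)\to\G(I)\to\C\to 0$ of $\R(J)$-modules with $\C=\bigoplus_n I^n/J^n$; since $\G(J)$ is Cohen--Macaulay with $a(\G(J))=-g$, splitting this into two short exact sequences reduces ``$\G(I)$ Cohen--Macaulay'' to a depth estimate for $\C$, to be obtained from $r\ls d-g+1$ and from $0\to I^n/J^n\to R/J^n\to R/I^n\to 0$ together with $\depth R/I^j\gs d-g-j+1$.

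I expect this depth estimate for $\C=\bigoplus_n I^n/J^n$ to be the main obstacle, along with the borderline case $r=s-g+1$, where the slack in Lemma \ref{boundrn} is exhausted and $I\fm=J\fm$ must be used essentially, not merely to identify the filtration. Once iv) $\Rightarrow$ ii) is in hand the cycle closes. The concluding assertion is then immediate: if $s=d$ and $I$ is of Goto-minimal $j$-multiplicity satisfying $G_d$ and $AN^-_{d-2}$, Theorem \ref{mmsimp}(2) gives $I\fm=J\fm$ for every minimal reduction $J$, so all hypotheses of the theorem are satisfied.
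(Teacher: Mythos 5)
Your treatment of the unconditional chain is correct and is essentially the paper's own argument: i) $\Leftrightarrow$ ii) via Theorem \ref{IT} together with Lemma \ref{boundrn} and Theorem \ref{SUV1}; the identity $a(\F(I))=r-s$ for Cohen--Macaulay $\F(I)$ proved by going modulo the regular sequence $x_1',\dots,x_s'$ in $[\F(I)]_1$ (your observation that $I^n\fm=JI^{n-1}\fm\subseteq JI^{n-1}$ lets you write the Artinian quotient as $\bigoplus_n I^n/JI^{n-1}$, which is the same computation the paper does with Nakayama); ii) $\Rightarrow$ iii) via Theorem \ref{FCM}, whose colon hypothesis is verified exactly as you say from $J\cap I^n\fm=I^n\fm=JI^{n-1}\fm$; and iii) $\Rightarrow$ iv) immediately. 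The passage to a general minimal reduction via Lemma \ref{onlyone}(1) to ensure $r_J(I)=r$ is a legitimate (and needed) step.

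The genuine gap is iv) $\Rightarrow$ ii). The paper settles this in one line by invoking Johnson--Ulrich \cite[3.13]{JU}: under $G_s$, $AN^-_{s-2}$, the depth hypotheses $\depth R/I^j\gs d-g-j+1$ for $1\ls j\ls s-g+1$, and $r\ls s-g+1$, the ring $\G(I)$ is Cohen--Macaulay. Your route via Corollary \ref{Theo2} only yields $\depth\G(I)\gs s-1$, one short of Cohen--Macaulayness, and your proposed upgrade through the four-term sequence $0\to\C[1]\to\G(J)\to\G(I)\to\C\to 0$ cannot close this by depth counting alone: splitting it into two short exact sequences gives at best $\depth\G(I)\gs\min\{\depth\C-1,\,d\}$, so one would need $\depth\C\gs d+1$, which is impossible for a nonzero module of dimension at most $d$. (In Lemma \ref{boundrn} this sequence is used in the opposite direction: the Cohen--Macaulayness of $\G(I)$ is a \emph{hypothesis} there, used to kill $\HH{i}{\fM}{\C}$ and $\HH{i}{\fM}{\N}$ for $i<d$.) You flag the depth estimate for $\C$ as the main obstacle yourself; completing it would amount to reproving the Johnson--Ulrich theorem, which requires a genuine induction on residual intersections and powers of $I$, not just the bookkeeping you set up. So this implication should be treated as a citation, or given a substantially longer argument than sketched. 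The concluding ``in particular'' statement via Theorem \ref{mmsimp}(2) is fine.
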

\begin{proof}

i) $\Leftrightarrow$ ii): By Theorem \ref{IT} if $\R(I)$ is Cohen-Macaulay, then $\G(I)$ is Cohen-Macaulay. The converse holds if $a(\G(I))<0$. This follows by Theorem \ref{SUV1} and Lemma \ref{boundrn}.

ii) $\Rightarrow$ iii): By Lemma \ref{onlyone} (1) and Theorem \ref{FCM}, $\F(I)$ is Cohen-Macaulay. Now, if $\F(I)$ is Cohen-Macaulay we have $a(\F(I))=r-s$. To verify this, since $J\subseteq I$ is a reduction, the images $x_1',\ldots, x_s'$ in $I/I\fm =[\F(I)]_1$ form a regular sequence. Hence, $a(\F(\overline{I}))=a(\F(I)/(x_1',\ldots,x_s'))-s$. The latter algebra is Artinian and therefore 
\begin{align*}
a(\F(I)/(x_1',\ldots,x_s'))&=\max\{j\mid [\F(I)/(x_1',\ldots,x_s')]_j\neq 0\}\\
&= \max\{j\mid I^j\neq JI^{j-1}+I^j\fm \}\\
&= r,
\end{align*}
where the last equality follows by Nakayama's lemma. The inequality now follows by Lemma \ref{boundrn}.

iii) $\Rightarrow$ iv): This follows immediately from $a(\F(I))=r-s$.

If the depth inequalities hold and $r\ls s-g+1$, then $\G(I)$ is Cohen-Macaulay by \cite[3.13]{JU}. Hence all the statements are equivalent. The last statement follows by Theorem \ref{mmsimp}.
\end{proof}

The bound in Lemma \ref{boundrn} is referred as the {\it expected reduction number} in the literature and it has a strong relation with the Cohen-Macaulayness of $\G(I)$ for ideals satisfying certain assumptions (see \cite{PU} and \cite{U2}). In \cite{U2}, Bernd Ulrich introduced the notion of {\it $s$-balanced ideals} to characterize the ideals having the expected reduction number (see \cite[3.1]{U2}), this property for ideals in a Gorenstein ring satisfying $G_s$ and the set of inequalities $\depth R/I^j\gs d-g-j+1$ for every $1\ls j\ls s-g+1$, is equivalent to $J:I$ being independent of the minimal reduction $J$ of $I$. This is the case for ideals $I$ with analytic spread $\ell(I)=s$ satisfying $G_s$, $AN_{s-1}^-$, and such that $I\fm=J\fm$ for a minimal reduction $J$ of $I$ (cf. Lemma \ref{onlyone}).

\begin{cor}\label{ineqbound}
Let $R$ be a Gorenstein local ring of dimension $d$, with maximal ideal $\fm$, and infinite residue field. Let $I$ be an $R$-ideal of analytic spread $\ell(I)=s$, grade $g$, and reduction number $r$. Assume $I$ satisfies $G_s$ and $\depth R/I^j\gs d-g-j+1$ for every $1\ls j\ls s-g+1.$ If $I\fm=J\fm$ for $J$ a minimal reduction of $I$, then one of the following two conditions holds:

\begin{enumerate}
\item[i)] $I=J$ and $r=0$.

\item[ii)] $I\neq J$ and $r=s-g+1$. 
\end{enumerate}
In either case $\G(I)$ and $\F(I)$ are Cohen-Macaulay. Furthermore, if $g\gs 2$ then $\R(I)$ is also Cohen-Macaulay.
\end{cor}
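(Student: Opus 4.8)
The plan is to show that under the hypotheses, the conditions of the ideals considered in Lemma \ref{boundrn} and Theorem \ref{Theo1} are all met, so the conclusions about $\G(I)$, $\F(I)$, and $\R(I)$ follow once the dichotomy on $r$ is established. First I would observe that since $R$ is Gorenstein and $I$ satisfies $G_s$ together with the depth inequalities $\depth R/I^j\gs d-g-j+1$ for $1\ls j\ls s-g+1$, item iv) in the list of classes after Proposition \ref{GsANs1} (drawn from \cite[2.2]{JU}) gives that $I$ satisfies $AN_{s-2}^-$. Thus all the standing hypotheses of Theorem \ref{Theo1} are in force, and moreover the additional depth hypothesis in the second half of that theorem holds, so statements i)--iv) there are all equivalent. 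In particular, the implication iv) $\Rightarrow$ ii) $\Rightarrow$ iii) shows that $r\ls s-g+1$ forces $\G(I)$ and $\F(I)$ Cohen-Macaulay, and i) $\Leftrightarrow$ ii) gives $\R(I)$ Cohen-Macaulay when $g\gs 2$. So the entire corollary reduces to proving that $r$ equals either $0$ or exactly $s-g+1$, and that the first case occurs precisely when $I=J$.

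For the dichotomy itself: if $I=J$ then $I$ is its own reduction, so $r=0$, giving case i). Conversely, suppose $I\neq J$; I must show $r=s-g+1$ exactly. The upper bound $r\ls s-g+1$ is not immediate (Lemma \ref{boundrn} assumes $\G(I)$ Cohen-Macaulay), so the natural route is: first invoke Lemma \ref{boundrn} is unavailable directly, but one can instead argue that $r$ cannot be smaller than $s-g+1$ when $I\neq J$. The key input is the theory of $s$-balanced ideals (\cite[3.1]{U2}) alluded to in the paragraph preceding the corollary: in a Gorenstein ring, an ideal satisfying $G_s$ and the given depth inequalities is $s$-balanced if and only if $J:I$ is independent of the minimal reduction $J$; and by Lemma \ref{onlyone}(2) together with the fact that $AN_{s-1}^-$ also holds here (it follows from the same class iv) argument, since $\depth R/I^j \gs d-g-j+1$ for $1 \le j \le s-g-1$ in particular), the equality $I\fm=J\fm$ forces $\core(I)$ to be either $I$ or $I\fm$, hence $J\fm=I\fm$ for every minimal reduction, hence $J:I$ is independent of $J$. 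Thus $I$ is $s$-balanced. For $s$-balanced ideals one has the sharp statement that $r$ is forced to be exactly the expected value $s-g+1$ unless $I$ is already equal to its minimal reductions; this is precisely the content being quoted from \cite{U2}. Combining, $I\neq J$ gives $r=s-g+1$, establishing case ii), and then Theorem \ref{Theo1} delivers the Cohen-Macaulayness conclusions.

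I expect the main obstacle to be handling the lower bound $r\gs s-g+1$ in case ii) cleanly, i.e., ruling out $1\ls r\ls s-g$ when $I\neq J$. The paragraph before the corollary indicates this should follow from Ulrich's $s$-balanced machinery, but making the citation precise — specifically, extracting from \cite{U2} that an $s$-balanced ideal which is not equal to its reductions has reduction number exactly $s-g+1$ (rather than merely at most $s-g+1$) — is the delicate point, and one may need to supplement it with the observation that $\G(I)$ is Cohen-Macaulay for $s$-balanced ideals (again via \cite{U2} or \cite{JU}) to then feed into Theorem \ref{SUV1}, which gives $a(\G(I))=\max\{-g,r-s\}$; combined with the constraint forced by $I\fm=J\fm$ (as exploited in the proof of Lemma \ref{boundrn}, where $a(\G(I))\ls -g+1$), one pins down $r$. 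The remaining steps — verifying $AN_{s-2}^-$ and $AN_{s-1}^-$ from the Gorenstein-plus-depth hypotheses, and assembling the Cohen-Macaulayness statements from Theorem \ref{Theo1} — are routine given the results already established in the excerpt.
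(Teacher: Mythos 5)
Your proposal is correct and follows essentially the same route as the paper: deduce the Artin--Nagata property from the Gorenstein, $G_s$, and depth hypotheses (the paper cites \cite[2.9]{U1} directly for $AN_{s-1}^-$ rather than your class-iv) argument), observe that $I\fm=J\fm$ makes $I$ $s$-balanced so that \cite[4.8]{U2} forces $r=s-g+1$ exactly when $I\neq J$, and then invoke Theorem \ref{Theo1} for the Cohen--Macaulayness of $\G(I)$, $\F(I)$, and $\R(I)$. The ``delicate point'' you flag is handled in the paper exactly as you suspect, by quoting \cite[4.8]{U2} for the sharp equality.
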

\begin{proof}
Under these assumptions $I$ satisfies $AN_{s-1}^-$ (\cite[2.9]{U1}).
If $I=J$ then clearly $r=0$. If $I\neq J$, since $I$ is $s$-balanced, we have $r= s-g+1$ by \cite[4.8]{U2}. 

The Cohen-Macaulayness of $\R(I)$, $\G(I)$, and $\F(I)$ follow by Theorem \ref{Theo1}.
\end{proof}

\section{Almost Cohen-Macaulay Fiber Cone}
The goal of this section is to generalize \cite[4.4]{JV1} to ideals of analytic spread $\ell(I)=d$ satisfying $G_d$ and $AN_{d-2}^{-}$.

We define $I$ to be of {\it almost Goto-minimal $j$-multiplicity} if for $d$ general elements $x_1,\ldots, x_d$ of $I$ the ideal $\overline{I}$ of $\overline{R}=R/(x_1,\ldots,x_{d-1}):I^{\infty}$ is of almost Goto-minimal multiplicity i.e. $\ll(\overline{ I\fm}/\overline{x_d\fm})=1$.

We will denote by $\tau_J(I)$ the reduction number of $I$ with respect to $J$ in the filtration $\{I^n\fm\}_{n\gs -1}$, i.e., the least integer $n$ such that $I^{n}\fm=JI^{n-1}\fm$ (see \cite[p. 62]{RV1}). For every $j\gs 0$, we will denote by $\Wtilde{I^j}\fm=\bigcup_{t\gs 1}(I^{j+t}\fm:I^{t})$ the {\it Ratliff-Rush filtration} of $I$ with respect to $\fm$ (see \cite[2.2]{JV1}). We present some properties of this filtration that we will need in the proof of the next theorem. These properties are classical, see for example \cite[Lemma 3.1]{RV1} or \cite[2.3]{JV1}.

\begin{prop}\label{proprr}
Let $R$ be a Noetherian local ring, with maximal ideal $\fm$, and infinite residue field. Let $I$ be an $R$-ideal and assume $\grade I>0$, then
\begin{enumerate}
\item $\Wtilde{I^j}\fm= I^{j}\fm$ for $j\gg 0$.
\item $\Wtilde{I^{j}}\fm:x= \Wtilde{I^{j-1}}\fm$ for every $j\gs 1$, for $x$ a general element of $I$.
\end{enumerate}
\end{prop}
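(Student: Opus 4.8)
\textbf{Proof proposal for Proposition \ref{proprr}.}

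The plan is to establish both statements using the standard mechanism behind Ratliff--Rush type filtrations: the ascending chain of colon ideals $I^{j}\fm \subseteq (I^{j+1}\fm : I) \subseteq (I^{j+2}\fm : I^{2}) \subseteq \cdots$ stabilizes because $R$ is Noetherian, and the stable value is $\Wtilde{I^{j}}\fm$ by definition. For (1), I would first observe that since $\grade I > 0$, we may pick $x \in I$ a nonzerodivisor (after passing to a general element, using that $k$ is infinite). Then for $t$ large enough, $(I^{j+t}\fm : I^{t}) \subseteq (I^{j+t}\fm : x^{t})$, and I would bound the latter using the Artin--Rees lemma applied to the submodule $I^{j}\fm \subseteq R$ with respect to the ideal $I$: there is a constant $c$ with $I^{n} \cap I^{j}\fm = I^{n-c}(I^{c}\cap I^{j}\fm) \subseteq I^{n-c}\cdot I^{j}\fm$ for $n \geq c$; combined with the fact that $x^{t}$-multiples landing in $I^{j+t}\fm$ must lie in $I^{j+t}\fm : x^{t}$, and that $x$ regular forces $x^{t} R \cap I^{j+t}\fm$ to be controlled, one deduces $(I^{j+t}\fm : x^{t}) = I^{j}\fm$ for $t \gg 0$, hence $\Wtilde{I^{j}}\fm = I^{j}\fm$. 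Since the whole filtration $\{\Wtilde{I^{j}}\fm\}_{j}$ is $I$-good and agrees with $\{I^{j}\fm\}$ in all high degrees, this gives (1) for all $j \gg 0$.

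For (2), fix $j \geq 1$ and a general element $x \in I$; I want $\Wtilde{I^{j}}\fm : x = \Wtilde{I^{j-1}}\fm$. The inclusion $\supseteq$ is essentially formal: if $y \in \Wtilde{I^{j-1}}\fm$, then $y I^{t} \subseteq I^{j-1+t}\fm$ for some $t$, so $(yx) I^{t} \subseteq x I^{j-1+t}\fm \subseteq I^{j+t}\fm$, whence $yx \in \Wtilde{I^{j}}\fm$. For $\subseteq$, suppose $yx \in \Wtilde{I^{j}}\fm$, so $(yx)I^{t} \subseteq I^{j+t}\fm$ for some $t$; then $y \cdot x I^{t} \subseteq I^{j+t}\fm$. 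The point is to promote this to $y I^{t+1} \subseteq I^{j+t}\fm$ (or some higher power), which would place $y \in \Wtilde{I^{j-1}}\fm$. Here I would use that a general $x$ is a superficial element for the filtration $\{I^{n}\fm\}$ (listed as a general property in Section 2, item iv of the bulleted list of general properties, applied to the $I$-good filtration $\{I^{n}\fm\}_{n \geq -1}$), so there is a constant $c$ with $(I^{n+1}\fm : x) \cap I^{c}\fm = I^{n}\fm$ for all $n \geq c$; iterating, $(I^{n+t}\fm : x^{t}) \cap I^{c}\fm = I^{n}\fm$ for $n \geq c$. Combining $y x^{?} \in I^{\bullet}\fm$ information with the superficiality relation, and using $\grade I > 0$ so that $x$ is regular (hence $y$ is determined by $yx$), yields $y \in \Wtilde{I^{j-1}}\fm$ after enlarging $t$.

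The main obstacle I anticipate is the second containment in (2): converting membership of $yx$ in $\Wtilde{I^{j}}\fm$ — which is an $I^{t}$-multiple statement — into membership of $y$ in the correctly shifted Ratliff--Rush filtration, without losing a degree. The clean way is to first replace $\{I^{n}\fm\}$ by its Ratliff--Rush closure throughout (which by (1) is an $I$-good filtration with the same graded pieces in large degree and with no embedded behavior in low degree), so that a superficial $x$ becomes an honest nonzerodivisor on the associated graded module of that filtration; then the colon-by-$x$ computation is exact degree by degree via the Valabrega--Valla type argument, and no Artin--Rees constant intervenes. I would phrase the argument at the level of the $I$-good filtration $\mathcal{W} = \{\Wtilde{I^{n}}\fm\}$ directly: $x$ general is regular on $\mathcal{W}$'s Rees module, $\Wtilde{I^{j}}\fm : x$ is by construction again Ratliff--Rush closed, and the identity $\Wtilde{I^{j}}\fm : x = \Wtilde{I^{j-1}}\fm$ follows from the minimality (largeness) defining the closure together with $\Wtilde{I^{j-1}}\fm \cdot x \subseteq \Wtilde{I^{j}}\fm$ and the reverse from $x$-regularity in high degrees plus the stabilization in (1). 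This is exactly the content of \cite[Lemma 3.1]{RV1} and \cite[2.3]{JV1} cited in the statement, so at the level of detail appropriate here I would either invoke those references or reproduce this short chain of implications.
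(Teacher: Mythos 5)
Your proposal assembles the right two ingredients for part (2) but never actually closes the forward inclusion, and the paper does so with a two-line colon manipulation that is worth recording. Once you know (a) that the union defining the closure stabilizes, so that $\Wtilde{I^{j}}\fm=I^{j+t}\fm:I^{t}$ and $\Wtilde{I^{j-1}}\fm=I^{j+t-1}\fm:I^{t}$ for one sufficiently large common $t$, and (b) that a general $x\in I$ is regular and superficial for the filtration $\{I^{n}\fm\}_{n\gs -1}$, whence $I^{n}\fm:x=I^{n-1}\fm$ for all $n\gg 0$, the identity follows by commuting colons:
$$\Wtilde{I^{j}}\fm:x=(I^{j+t}\fm:I^{t}):x=(I^{j+t}\fm:x):I^{t}=I^{j+t-1}\fm:I^{t}=\Wtilde{I^{j-1}}\fm.$$
Your text circles around exactly these two facts (``$x$-regularity in high degrees plus the stabilization in (1)'') but replaces the decisive computation with ``combining \dots yields \dots after enlarging $t$.'' The proposed detour through the associated graded module of the closed filtration and a Valabrega--Valla argument is both unnecessary and more expensive: it would require first proving that this graded module has positive depth, which is a theorem in its own right rather than something you may assume.

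There is also a genuine error in your sketch of (1): the Artin--Rees argument is claimed to give $(I^{j+t}\fm:x^{t})=I^{j}\fm$ for $t\gg 0$ with $j$ fixed, which would force $\Wtilde{I^{j}}\fm=I^{j}\fm$ for \emph{every} $j$. That cannot be right in general: any $y\in\Wtilde{I^{j}}\fm$ satisfies $yx^{t}\in I^{j+t}\fm$ for large $t$, so $(I^{j+t}\fm:x^{t})\supseteq\Wtilde{I^{j}}\fm$, and this strictly contains $I^{j}\fm$ precisely in the situations where the Ratliff--Rush construction is nontrivial. The equality in (1) holds only for $j\gg 0$, and the paper does not reprove it --- it cites \cite[2.3 (2)]{JV1} --- which is also the safe move here.
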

\begin{proof}
The first part follows by \cite[2.3 (2)]{JV1}. For the second part, notice that since $x$ regular and a superficial element for the filtration $\{I^n\fm\}_{n\gs -1}$, we have $I^{j+t}\fm:x=I^{j+t-1}\fm$ for $t\gg 0$. We can also assume $\Wtilde{I^j}\fm=I^{j+t}\fm:I^{t}$ and $\Wtilde{I^{j-1}}\fm=I^{j+t-1}\fm:I^{t}$. Therefore,
$$\Wtilde{I^{j}}\fm:x=(I^{j+t}\fm:I^{t}):x=(I^{j+t}\fm:x):I^{t}=I^{j+t-1}\fm:I^{t}=\Wtilde{I^{j-1}}\fm.$$
\end{proof}

\begin{lemma}\label{fibdim2} Let $R$ be a Cohen-Macaulay  local ring of dimension $d\gs 2$, with maximal ideal $\fm$, and infinite residue field.  Let $I$ be an $R$-ideal of analytic spread $\ell(I)=d$ satisfying $G_d$ and $AN_{d-2}^{-}$. Let $J=(x_1,\ldots, x_d)$ be an ideal generated by $d$ general elements in $I$ and $\mathfrak{N}=\G(I)_{\gs 1}$. If $\ll(I\fm/J\fm)=1$, then 
\begin{enumerate}
\item If $d=2$, $\depth \Z(I)>0$.  
\item $\depth_{\mathfrak{N}} \Z(I)_{\gs n}\gs d-1$, for every $n\gs 2$.
\item If $\grade I\gs d-1$, then $\depth_{\mathfrak{N}} \Z(I)\gs d-1.$ 
\end{enumerate}
\end{lemma}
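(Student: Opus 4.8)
The plan is to peel off general elements one at a time, reducing to lower-dimensional situations where the hypothesis $\ell(I\fm/J\fm)=1$ can be exploited directly, and to transfer depth information between the filtration $\{I^n\fm\}$ and the ordinary filtration via Proposition~\ref{ineqdepths} and the Valabrega--Valla criterion. For part (1), with $d=2$, I would choose a general $x_1 \in I$ that is a regular element of $R$ and whose initial form $x_1^*$ is regular in $\G(I)_{\ge 1}$ (possible since $\G(I)_{\ge 1}$ has positive depth by \cite[4.10]{PX1} applied in dimension $d$ — here I should check the depth hypothesis on $R/I$ is available, or argue via Proposition~\ref{resprop}). The goal is to show $x_1$, or rather a suitable initial form, is regular on $\Z(I)$; using Proposition~\ref{resprop}(3) to get $(x_1)\cap I^n\fm = x_1 I^{n-1}\fm$ for small $n$ and the condition $\ell(I\fm/J\fm)=1$ to control the Ratliff--Rush steps, one deduces $x_1^*\in[\Z(I)]_1$ is $\Z(I)$-regular, hence $\depth\Z(I)>0$.

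For parts (2) and (3), I would induct on $d$. The base case $d=2$ for (2) should follow from (1) together with Proposition~\ref{ineqdepths}(1) relating $\depth\Z(I)_{\ge n}$ to $\depth\G(I)_{\ge n-1}$ and $\depth\F(I)_{\ge n}$: when $\ell(I\fm/J\fm)=1$ the fiber cone and associated graded are ``almost'' Cohen--Macaulay in high degrees, so $\Z(I)_{\ge n}$ inherits depth $\ge d-1=1$ for $n\ge 2$. For the inductive step, factor out a general $x_1\in I$ with $x_1^*$ regular in $\G(I)_{\ge 1}$ and (after checking via Proposition~\ref{ineqdepths}) also regular in $\Z(I)_{\ge n}$ for the relevant range; set $\overline R = R/(x_1)$. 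One must verify: $\overline I$ satisfies $G_{d-1}$ and $AN_{d-3}^-$ (via Proposition~\ref{resprop}(5) or the specialization results, using that $\depth R/I$ is large enough by \cite[3.3]{EH} when $\grade I \ge d-1$, or otherwise using that $x_1$ is regular on $R/I$), that $\ell(\overline I)=d-1$, and crucially that $\ell(\overline{I}\,\overline\fm/\overline J\,\overline\fm)=1$ is preserved — this last point uses $(x_1)\cap I\fm = x_1\fm$, which comes from $x_1$ being part of a minimal generating set of $I$ together with Proposition~\ref{resprop}(3). Then $\Z(\overline I)_{\ge n} = \Z(I)_{\ge n}/(x_1^*)$ (in the appropriate degree range), so $\depth_{\overline{\mathfrak N}}\Z(\overline I)_{\ge n}\ge d-2$ by induction gives $\depth_{\mathfrak N}\Z(I)_{\ge n}\ge d-1$. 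Part (3) is the same induction but starting the depth bookkeeping from the stronger hypothesis $\grade I\ge d-1$, which forces $\depth R/I \ge 1$ and lets one run the argument down to $n=1$ (i.e.\ the whole module $\Z(I)$) rather than only $n\ge 2$.

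The main obstacle I anticipate is the bookkeeping around \emph{which} element's initial form is regular on $\Z(I)$ (versus on $\G(I)$), and the degree shift $[-1]$ built into the definition of $\Z(I)=\bigoplus I^{n-1}\fm/I^n\fm$. Specifically, proving $x_1^*$ (living in $[\G(I)]_1$) acts regularly on $\Z(I)_{\ge n}$ requires the Valabrega--Valla-type equality $x_1\fm \cap I^{n+1}\fm = x_1 I^n\fm$ for the shifted filtration, and establishing this seems to need the precise interaction of the condition $\ell(I\fm/J\fm)=1$ with the Ratliff--Rush filtration $\Wtilde{I^j}\fm$ from Proposition~\ref{proprr} — that is where the almost-Goto-minimality genuinely enters, beyond what one would get from $\ell(I\fm/J\fm)=0$. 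A secondary technical point is ensuring that all residual hypotheses ($G$, $AN^-$) and the analytic-spread and the length-one conditions simultaneously descend modulo the general element $x_1$; I expect this to be routine given Propositions~\ref{resprop}, \ref{GsANs1}, and Lemma~\ref{onlyone}, but it must be spelled out carefully since three separate properties must all specialize at once.
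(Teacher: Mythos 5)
Your outline names the right target (a Valabrega--Valla equality $(x_1)\cap I^j\fm = x_1I^{j-1}\fm$ for all $j$, which makes $x_1^*$ regular on $\Z(I)$) and the right supporting tools, but it has a genuine gap exactly where the lemma is hard, namely the $d=2$ base case. Your entry point is unsound: you propose to pick $x_1$ with $x_1^*$ regular on $\G(I)_{\gs 1}$, citing \cite[4.10]{PX1}, but that result assumes almost \emph{minimal} $j$-multiplicity ($\ll(\overline{I^2}/\overline{x_dI})=1$), which is a different condition from the hypothesis $\ll(I\fm/J\fm)=1$ here. Under the present hypotheses there is no control on $\depth\G(I)$ whatsoever (compare the paper's Example 6.3, an almost Goto-minimal ideal with $\depth\G(I)=0$), and the actual proof never needs, and never uses, regularity of $x_1^*$ on $\G(I)_{\gs 1}$. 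More seriously, the step you describe as using $\ll(I\fm/J\fm)=1$ ``to control the Ratliff--Rush steps'' is precisely the missing argument, not a verification. The mechanism is: (a) writing $I\fm=J\fm+(ab)$ with $ab\fm\subseteq J\fm$, multiplication by $a^{j-1}$ surjects $I\fm/J\fm$ onto $I^j\fm/JI^{j-1}\fm$, so all these lengths are $\ls 1$; (b) one passes to the one-dimensional ring $\overline R=R/((x_1):x_2)$, where $\overline I$ is $\overline\fm$-primary, and proves $\tau_J(I)=\tau_{\overline J}(\overline I)$ by equating first Hilbert coefficients of $\{\overline{I^n\fm}\}$ and its Ratliff--Rush closure and invoking the bound $\tau_J(I)\ls k+\mu_R(N)$ of \cite[Corollary 4.2]{RV1}; (c) comparing lengths degree by degree then yields $(x_1)\cap I^j\fm\subseteq JI^{j-1}\fm$ for \emph{all} $j$, which bootstraps via Proposition~\ref{resprop}(3) to $(x_1)\cap I^j\fm=x_1I^{j-1}\fm$. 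Without the reduction-number comparison in (b) there is no route to the containment in (c) beyond the small degrees where $x_1,\ldots,x_d$ being part of a minimal generating set suffices, and your sketch offers no substitute. (Also, the Valabrega--Valla condition you write, $x_1\fm\cap I^{n+1}\fm=x_1I^n\fm$, is not the correct one for $x_1^*\in[\G(I)]_1$ acting on $\Z(I)$; it should be $(x_1)\cap I^j\fm=x_1I^{j-1}\fm$.)

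For parts (2) and (3), your one-element-at-a-time induction on $d$ could be made to work once the base case is secured (it is Sally's machine run stepwise), but note that the paper instead reduces in a single stroke modulo the geometric residual intersection $K_{d-2}=(x_1,\ldots,x_{d-2}):I$ to a two-dimensional Cohen--Macaulay ring, identifies $\Z(I/(x_1,\ldots,x_{d-2}))_{\gs n}$ with $\Z(\overline I)_{\gs n}$ for $n\gs 2$ using $K_{d-2}\cap I=(x_1,\ldots,x_{d-2})$, and only then applies Sally's machine to climb back up by $d-2$. The reason (3) requires $\grade I\gs d-1$ is not a vague ``bookkeeping down to $n=1$'': it is that this hypothesis forces $K_{d-2}=(x_1,\ldots,x_{d-2})$, so the identification of the two graded modules extends to degree one and hence to all of $\Z(I)$. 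You should also be aware that reducing modulo $(x_1,\ldots,x_{d-2})$ alone does not produce a setting where the $d=2$ case applies; the residual intersection is what makes the quotient Cohen--Macaulay with the induced ideal still satisfying $G_2$ and $AN_0^-$.
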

\begin{proof}

We first assume $d=2$ and $I$ has positive grade, we will prove $\depth_{\mathfrak{N}} \Z(I)>0$ which will show (3) for $d=2$. Since $I\fm$ is generated by elements of the form $ab$ with $a\in I$ and $b\in \fm$ we have, $ I\fm= J \fm+ (ab)$ for some $a\in I$ and $b\in \fm$ such that $ab\not\in  J\fm$ but $ ab\fm\in J\fm$, then the multiplication map 
\[
\begin{tikzcd}
I\fm/J\fm \arrow[two heads]{r}{a^{j-1}} & I^{j}\fm/JI^{j-1}\fm
\end{tikzcd}
\]
is surjective for every $j\gs 1$. Then, $\ll( I^{j}\fm/JI^{j-1}\fm)\ls 1$ for every $j\gs 1$. Let $K_1=(x_1):x_2$ and $\overline{R}=R/K_1$. Let $\overline{I}$ be the image of $I$ in $\overline{R}$. By Proposition \ref{resprop}, (2), $x_2$ is regular in $\overline{R}$, then $\overline{I}$ is an $\overline{\fm}$-primary ideal. Let $s$ be $\tau_{\overline{J}}(\overline{I})$. We will show that $\tau_J(I)=s$. 
By Propositions \ref{genresprop} and \ref{proprr}, (1), the Hilbert functions of the two $\overline{I}$-good filtrations $\{\overline{I^n\fm}\}_{n\gs -1}$ and $\{\overline{\Wtilde{I^n}\fm}\}_{n\gs -1}$ are asymptotically the same and therefore their first Hilbert coefficients are equal. By \cite[Lemma 2.2]{RV1} this implies $\sum_{j\gs 1}\ll\big(\overline{I^j\fm}/\overline{x_2I^{j-1}\fm}\big)=\sum_{j\gs 1}\ll\big(\overline{\Wtilde{I^j}\fm}/\overline{x_2\Wtilde{I^{j-1}}\fm}\big)$ because $\overline{x_2}$ is a superficial element for the two filtrations above. We conclude $\sum_{j\gs 1}\ll\big(\overline{\Wtilde{I^j}\fm}/\overline{x_2\Wtilde{I^{j-1}}\fm}\big)=s-1$, since $\ll\big(\overline{I^j\fm}/\overline{x_2I^{j-1}\fm}\big)=1$ for every $1\ls j<s$. Hence,

\vskip 0.1in

\noindent \begin{tabular}{llll}
\hspace{1.0cm}$s-1$&=&$\sum_{j\gs 1}\ll\big(\Wtilde{I^j}\fm/x_2\Wtilde{I^{j-1}}\fm
+K_1\cap \Wtilde{I^j}\fm \big)$&\\
   &=& $\sum_{j\gs 1}\ll\big(\Wtilde{I^j}\fm/x_2\Wtilde{I^{j-1}}\fm
+(x_1)\cap \Wtilde{I^j}\fm \big),$&{ by Proposition \ref{resprop}, (3),}\\
&=& $\sum_{j\gs 1}\ll\big(\Wtilde{I^j}\fm/x_2\Wtilde{I^{j-1}}\fm
+(x_1)(\Wtilde{I^j}\fm:x_1) \big)$&\\
&=& $\sum_{j\gs 1}\ll\big(\Wtilde{I^j}\fm/x_2\Wtilde{I^{j-1}}\fm
+x_1\Wtilde{I^{j-1}}\fm\big),$&{ by Proposition \ref{proprr}, (2),}\\
&=& $\sum_{j\gs 1}\ll\big(\Wtilde{I^j}\fm/J\Wtilde{I^{j-1}}\fm\big)$.&
\end{tabular}
\vskip 0.1in
 This shows in particular that all these lengths are finite. By \cite[Corollary 4.2]{RV1} we have $\tau_J(I)\ls k+\mu_R(N)$ where $k=\min\{j\mid I^{j}\fm\subseteq J\Wtilde{I^{j-1}}\fm\}$ and $N=\bigoplus_{j\gs 1}\Wtilde{I^{j}}\fm/J\Wtilde{I^{j-1}}\fm+I^{j}\fm.$ These numbers are finite by Proposition \ref{proprr}, (1). We remark that in \cite{RV1} this bound is stated for $\fm$-primary ideals, however the proof is not restricted to this case.  Now, 
\vskip 0.1in

\noindent \begin{tabular}{llll}
\hspace{0.7cm}$\mu_R(N)$&=&$\sum_{j\gs 1}\mu_R\big(\Wtilde{I^{j}}\fm/J\Wtilde{I^{j-1}}\fm+I^{j}\fm\big)$&\\
&$\ls$&$\sum_{j\gs 1}\ll\big(\Wtilde{I^{j}}\fm/J\Wtilde{I^{j-1}}\fm+I^{j}\fm\big)$&\\
&=&$\sum_{j\gs 1}\Big(\ll\big(\Wtilde{I^{j}}\fm/J\Wtilde{I^{j-1}}\fm\big)-\ll\big(J\Wtilde{I^{j-1}}\fm+I^{j}\fm/J\Wtilde{I^{j-1}\fm}\big)\Big)$&\\
&=& $\sum_{j\gs 1}\ll\big(\Wtilde{I^{j}}\fm/J\Wtilde{I^{j-1}}\fm\big)-(k-1)$&\\
&=&$s-k$.&
\end{tabular}
\vskip 0.1in
 Therefore, $\tau_J(I)\ls k+(s-k)=s$, as desired. From the equality $\tau_J(I)=\tau_{\overline{J}}(\overline{I})$ together with the fact $\ll( I^{j}\fm/JI^{j-1}\fm)\ls 1$,  we obtain that for every $j\gs 1$:
\vskip 0.1in

\noindent \begin{tabular}{llll}
\hspace{1.5cm}0&=&$\ll\big(I^{j}\fm/JI^{j-1}\fm\big)-\ll\big(\overline{I^j\fm}/\overline{x_2I^{j-1}\fm}\big)$&\\
 &=&$\ll\big(I^{j}\fm/JI^{j-1}\fm\big)-\ll\big(\overline{I^j\fm}/\overline{JI^{j-1}\fm}\big)$&\\
 &=&$\ll\big(I^{j}\fm/JI^{j-1}\fm\big)-\ll\big(I^{j}\fm/JI^{j-1}\fm+K_1\cap I^{j}\fm\big)$&\\
 &=& $\ll\big(JI^{j-1}\fm+K_1\cap I^{j}\fm/JI^{j-1}\fm\big)$&\\
 &=&$\ll\big(JI^{j-1}\fm+(x_1)\cap I^{j}\fm/JI^{j-1}\fm\big),$&\hspace{-1.05cm}{ by Proposition \ref{resprop}, (3),}
\end{tabular}
\vskip 0.1in
 Therefore,$(x_1)\cap I^{j}\fm\subseteq JI^{j-1}\fm,$ for every $j\gs 1$. We will show now that  $x_1^*\in I/I^2=[\G(I)]_1$  is regular in $\Z(I)$, which will prove our claim. Since $x_1$ is regular in $R$, by the Valabrega-Valla criterion we only need to show $(x_1)\cap I^{j}\fm =x_1I^{j-1}\fm$ for $j\gs 1$. We prove this by induction on $j$. It is clear for $j=1$ since $x_1$ is part of a minimal set of generators of $I$, then we can assume $j\gs 2$. From the containment $(x_1)\cap I^{j}\fm\subseteq JI^{j-1}\fm$ we obtain:

\vskip 0.1in

\noindent \begin{tabular}{llll}
$(x_1)\cap I^{j}\fm$ &=& $(x_1)\cap JI^{j-1}\fm$&\\
&=& $x_1 I^{j-1}\fm+(x_1)\cap x_2 I^{j-1}\fm$&\\
&=& $x_1 I^{j-1}\fm+x_2((x_1):x_2\cap I^{j-1}\fm)$&\\
\end{tabular}

\noindent \begin{tabular}{llll}
\hspace{1.55cm}\,&=&$x_1 I^{j-1}\fm+x_2((x_1)\cap I^{j-1}\fm),$&\hspace{1.88cm}{ by Proposition \ref{resprop}, (3),}\\
&=& $x_1 I^{j-1}\fm+x_2x_1 I^{j-2}\fm,$&\hspace{1.88cm}{ by induction hypothesis,}\\
 &=& $x_1I^{j-1}\fm,$&
\end{tabular}
\vskip 0.1in
\noindent as desired. 

Now, let $I$ be of arbitrary grade. Let $K_0=\ann I$ and $\overline{R}=R/K_0$, then from the fact $K_0\cap I=0$ we get as in the proof of Lemma \ref{filtrdepth},  $\depth \Z(I)\gs \depth \Z(\overline{I})$. Notice $\ll(\overline{I\fm}/\overline{J\fm})=1$ and since $\F(I)=\F(\overline{I})$, $\ell(\overline{I})=2$ . Furthermore, by Proposition \ref{resprop}, (2), (5), the ideal $\overline{I}$ satisfies $G_2$ and $AN_0^-$ and $\grade \overline{I}>0$. Therefore $\depth \Z(\overline{I})>0$ and part (1) follows. Since $\Z(I)_{\gs n}=\Z(\overline{I})_{\gs n}$ for every $n\gs 2$, we also have part (2) for $d=2$.

Let $d\gs 3$, $K_{d-2}=(x_1,\ldots, x_{d-2}):I$, and $\overline{R}=R/K_{d-2}$. By Proposition \ref{resprop}, (2), (5), $\dim \overline{R}=2$ and $\overline{I}$ satisfies all the assumptions. Hence, from Proposition \ref{resprop}, (3), $\depth_{\mathfrak{N}} \Z(I/(x_1,\ldots, x_{d-2}))_{\gs n}=\depth_{\mathfrak{N}} \Z(\overline{I})_{\gs n}>0$ for every $n\gs 2$, and by Sally's machine \cite[Lemma 1.4]{RV1}, $\depth_{\mathfrak{N}} \Z(I)_{\gs n}\gs d-1$.

If $\grade I\gs d-1$, then $K_{d-2}=(x_1,\ldots, x_{d-2})$ and $\depth_{\mathfrak{N}}\Z(I/(x_1,\ldots, x_{d-2}))>0$. Again by Sally's machine we conclude $\depth_{\mathfrak{N}} \Z(I)\gs d-1$.
\end{proof}

The following is the main theorem of this section.

\begin{thm}\label{theo3}
 Let $R$ be a Cohen-Macaulay local ring of dimension $d$ and with infinite residue field.  Let $I$ be an $R$-ideal of analytic spread $\ell(I)=d$ and grade $g$. Assume $I$ satisfies $G_d$, $AN_{d-2}^{-}$, and $\depth R/I^j \gs \min\{d-g-j+1,\, 1\}$ for $j=1$ and 2. If $I$ is of almost Goto-minimal $j$-multiplicity and $\depth \G(I)\gs d-2$, then $\depth \F(I)\gs d-1$.
\end{thm}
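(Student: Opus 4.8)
The plan is to reduce to the case $d=2$ via a standard general-element induction, and then in the base case to use the short exact sequences of Proposition~\ref{ineqdepths} together with the depth estimates for $\Z(I)$ coming from Lemma~\ref{fibdim2}. More precisely, since all the hypotheses ($G_d$, $AN_{d-2}^-$, analytic spread $d$, almost Goto-minimal $j$-multiplicity, and the depth inequalities $\depth R/I^j\gs\min\{d-g-j+1,1\}$ for $j=1,2$) persist after cutting by a general element $x\in\fm$ — this is where Proposition~\ref{GsANs1} and the standard behavior of the depth inequalities are invoked, noting $\ell(\overline I)=d-1$ and $\grade\overline I$ may change but the relevant $\min$'s are controlled — and since $\F(\overline I)=\F(I)/(x')$ for $x'\in[\F(I)]_1$, one would reduce $\depth\F(I)\gs d-1$ to $\depth\F(\overline I)\gs d-2$ after checking $x'$ is a nonzerodivisor on $\F(I)$; that last point requires $\depth\F(I)>0$, which must be extracted first, and this is really the crux.

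First I would treat the base case $d=2$. Here we must show $\depth\F(I)\gs1$. After factoring out $\ann I$ (using Proposition~\ref{resprop}(2),(3),(5) as in Lemma~\ref{filtrdepth}, which leaves $\F(I)$ unchanged and makes $\grade I>0$), Lemma~\ref{fibdim2}(1) gives $\depth\Z(I)>0$. Since $\depth\G(I)\gs d-2=0$ is automatic, Proposition~\ref{ineqdepths}(5) would at first glance only yield $\depth\F(I)\gs\min\{\depth\Z(I),\depth\G(I)+1\}$, which is not enough unless $\depth\G(I)\gs0$ contributes a $+1$ — but $\depth\G(I)+1\gs1$ always holds provided $\G(I)\neq0$, so in fact Proposition~\ref{ineqdepths}(5) gives $\depth\F(I)\gs\min\{\depth\Z(I),\,1\}\gs1$. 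That settles $d=2$.

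For $d\gs3$ I would argue by induction on $d$. Pick a general $x\in\fm$; by the depth hypotheses and Proposition~\ref{resprop}-type arguments, $x$ is regular on $R$ and on the relevant modules, $\overline I:=IR/(x)$ inherits $G_d$ (hence $G_{d-1}$) and $AN_{d-3}^-$ by Proposition~\ref{GsANs1}, is still of analytic spread $d-1$, is still of almost Goto-minimal $j$-multiplicity, satisfies $\depth\overline R/\overline I^{\,j}\gs\min\{(d-1)-g'-j+1,1\}$ for $j=1,2$, and $\depth\G(\overline I)\gs d-3$. The point needing care is that passing $\depth\G(I)\gs d-2$ down to $\depth\G(\overline I)\gs d-3$ requires $x^*\in[\G(I)]_0=R/I$ to be a nonzerodivisor on $\G(I)$, i.e.\ $x$ avoids the (finitely many) degree-zero components of associated primes of $\G(I)$ as well as the associated primes of $R$; a general $x\in\fm$ does this. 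Granting $\depth\F(I)>0$ — which comes from Lemma~\ref{fibdim2}(2): $\depth_{\mathfrak N}\Z(I)_{\gs2}\gs d-1\gs2$, fed through Proposition~\ref{ineqdepths} in degrees $\gs1$ to force $\depth_{\mathfrak N}\F(I)_{\gs1}>0$ and hence $\depth\F(I)>0$ — the image $x'\in[\F(I)]_1$ is a nonzerodivisor, so $\depth\F(\overline I)=\depth\F(I)/(x')=\depth\F(I)-1$, and the induction hypothesis $\depth\F(\overline I)\gs (d-1)-1=d-2$ gives $\depth\F(I)\gs d-1$, as desired.

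The main obstacle I anticipate is establishing $\depth\F(I)>0$ cleanly and making the degree-shifted bookkeeping in Proposition~\ref{ineqdepths} work: Lemma~\ref{fibdim2} only controls $\Z(I)_{\gs n}$ for $n\gs2$ (and $\Z(I)$ itself only when $\grade I\gs d-1$), so for ideals of small grade one must run the whole argument with the graded truncations $M_{\gs i}$ and carefully patch together the depth of $\F(I)$ from the depth of $\F(I)_{\gs1}$ and the degree-$0$ piece — exactly the kind of ``depth on the punctured spectrum'' argument used in the proof of Proposition~\ref{Theo4}. The second delicate point is verifying that ``almost Goto-minimal $j$-multiplicity'' is inherited by $\overline I=IR/(x)$ for general $x\in\fm$; this should follow by the same specialization/length-semicontinuity techniques as in Lemma~\ref{welldefgoto}, comparing the length $\ll(\overline{I\fm}/\overline{x_dI\fm})$-type invariant before and after the generic hyperplane section, but it must be spelled out since the filtration $\{I^n\fm\}$ interacts with $x\in\fm$ in a way not covered verbatim by the earlier lemmas.
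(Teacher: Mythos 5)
Your base case $d=2$ is essentially the paper's (Lemma \ref{fibdim2}(1) plus Proposition \ref{ineqdepths}(5)), but your induction step has a genuine flaw: you cut by a general element $x\in\fm$, whereas the element must be a general element of $I$. An element $x\in\fm\setminus I$ has no image in $[\F(I)]_1=I/I\fm$, so there is no ``$x'\in[\F(I)]_1$'' to serve as a degree-one nonzerodivisor, and the identity $\depth\F(\overline I)=\depth\F(I)-1$ fails. In fact, when $x\in\fm$ is general and $\depth R/I^n>0$, one has $(x)\cap I^n=xI^n\subseteq I^n\fm$, so $\F(IR/(x))\cong\F(I)$: cutting by a general element of $\fm$ leaves the fiber cone unchanged (this is exactly how the paper uses such elements in Lemma \ref{boundrn}, to lower $\dim R$ while fixing $\F(I)$), so your induction never closes. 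Relatedly, Proposition \ref{GsANs1} is the wrong tool here; it is designed for the $\fm$-section used in Lemma \ref{boundrn}, not for this theorem.

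The paper's induction instead takes a general $x\in I$, and the real work is to make $x$ usable: (i) from Lemma \ref{fibdim2}(2) and Proposition \ref{ineqdepths} one gets $\depth_{\mathfrak N}\G(I)_{\gs 2}>0$, whence by Valabrega--Valla $xI^2\cap I^{n+3}=xI^{n+2}$, and combining with Proposition \ref{resprop}(6) (this is where the hypotheses $\depth R/I^j>0$ for $j=1,2$ enter, after disposing of the cases $d=g$ and $d=g+1$ separately) one gets $(x)\cap I^{n+1}=xI^n$ for all $n$, so $x^*$ is $\G(I)$-regular and $\F(I/(x))=\F(I)/(x')$; (ii) almost Goto-minimal $j$-multiplicity passes to $\overline I$ simply because $(x)\cap I\fm=x\fm$ for $x$ part of a minimal generating set --- no semicontinuity argument is needed; (iii) the regularity of $x'$ on $\F(I)$ is \emph{not} obtained by first proving $\depth\F(I)>0$, but a posteriori: the induction hypothesis gives $\depth\F(\overline I)\gs d-2>0$, and then the Sally machine for fiber cones (proof of \cite[2.7]{JV2}) upgrades this to $x'$ being $\F(I)$-regular, since $x$ is superficial for both filtrations. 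So the ``crux'' you identified is resolved in the opposite order from what you propose. To repair your argument you would need to replace the $\fm$-section by this $I$-section throughout and supply steps (i)--(iii).
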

\begin{proof}
Let $\mathfrak{N}=\G(I)_{\gs 1}$. If $d=g$, the result follows by \cite[Theorem 5.2]{RV1}, and if $d=g+1$ then it follows by Proposition \ref{ineqdepths}, (5) and Lemma \ref{fibdim2}, (3). Therefore we can assume $\min\{\depth R/I,\, \depth R/I^2\}>0$. As in the proof of Theorem \ref{mmsimp}, we can show that under these assumptions almost Goto-minimal $j$-multiplicity is equivalent to $\ll(I\fm/J\fm)=1$, for $J=(x_1,\ldots, x_d)$ an ideal generated by $d$ general elements in $I$. We will proceed by induction on $d$. The case $d=2$ follows by Proposition \ref{ineqdepths}, (5), and Lemma \ref{fibdim2}, (1). Assume that $d\gs 3$ and that the result holds in dimension $d-1$. As in the proof of Lemma \ref{fibdim2} we can reduce to the case $\grade I>0$ by factoring out $K_0=\ann I$, indeed if $\overline{R}=R/K_0$, then $\F(I)=\F(\overline{I})$ and from the fact that $K_0$ is Cohen-Macaulay it follows $\depth\G(\overline{I})\gs d-2$ and $\min\{\depth R/I+K_0,\depth R/I^2+K_0\}>0$. 

By Proposition \ref{ineqdepths} and Lemma \ref{fibdim2}, (2), we have $\depth \F(I)_{\gs n}\gs 2$, for every $n\gs 2$. But $\fm\G(I)\subseteq \ann \F(I)_{\gs n}$, therefore $\depth_{\mathfrak{N}}\F(I)_{\gs n}=\depth \F(I)_{\gs n}$, and again by Proposition \ref{ineqdepths} we obtain $\depth_{\mathfrak{N}} \G(I)_{\gs 2}>0$. Since $\G(I)_{\gs 2}$ is the associated graded module of the module $I^2$ with respect to the $I$-adic filtration, i.e., $\bigoplus_{n=0}^{\infty} I^n I^2 /I^{n+1} I^2$, we obtain by the Valabrega-Valla criterion that for a general $x$ in $I$ we have $x I^2\cap I^{n+3}=xI^{n+2}$ for every $n\gs 0$. Now, by Proposition \ref{resprop} (6), $(x)\cap I^2=xI$ and $(x)\cap I^3=xI^2$ and hence by the equalities above, $$(x)\cap I^{n+1}=xI^{n}$$ for every $n\gs 0$. Since $x$ is regular in $R$, we conclude $x^*\in I/I^2=[\G(I)]_1$ is regular in $\G(I)$.

The element $x$ is a superficial element for the two filtrations $\{I^n\}_{n\gs 0}$ and $\{I^n\fm\}_{n\gs -1}$ (cf. \cite[8.5.7]{SH}). Hence, $x$ is a superficial element for $\G(I)$ and $\F(I)$ in the sense of \cite{JV2}. Let $\overline{R}=R/(x)$. The ideal $\overline{I}$ satisfies $G_{d-1}$, $AN_{d-3}^-$, and $\ell(\overline{I})=d-1$. Since $(x)\cap  I\fm=x\fm$, $\overline{I}$ is also of almost Goto-minimal $j$-multiplicity. From $\G(\overline{I})\cong\G(I)/(x^*)$ we conclude $\depth \G(\overline{I})\gs d-3$ and then all the assumptions are satisfied by $\overline{I}.$ By induction hypothesis $\depth \F(\overline{I})\gs d-2>0$ and hence by Sally machine for fiber cones (see proof of \cite[2.7]{JV2}), $x'\in I/I\fm= [\F(I)]_1$ is regular. Again from $(x)\cap I^{n+1}=xI^{n}$ for every $n\gs 0$, we obtain $\F(\overline{I})=\F(I)/(x')$ and then $\depth \F(I)=1+\depth \F(\overline{I})\gs d-1$, as desired.
\end{proof}

\section{Examples}
The goal of this section is to provide examples of ideals of Goto-minimal $j$-multiplicity satisfying the Artin-Nagata properties assumed in the results of the previous sections.

\begin{example}
The monomial ideal $$I=(x_1^2,x_1x_2,\ldots, x_1x_d, x_2^2,x_2x_3,\ldots,x_2x_n)$$ in the ring $k[[x_1,\ldots,x_d]]$ with $k$ an infinite field is a {\it strongly stable} ideal of height 2. By \cite[3.1 and 3.3]{S} $I$ has analytic spread $\ell(I)=d$ and  satisfies $G_d$ and $AN_{d-2}^-$. By \cite[5.1]{S}, $\core(I)=I\fm$ and then by Theorem \ref{mmsimp}, $I$ is of Goto-minimal $j$-multiplicity. By \cite[4.9]{S}, $r(I)\ls 1$, then the algebra $\G(I)$ is Cohen-Macaulay. By Theorem \ref{Theo1},  $\R(I)$ and $\F(I)$ are Cohen-Macaulay.
\end{example}

\begin{example} Let $R=k[[x,y,z,w]]$ with $k$ and infinite field and $\fm$ the irrelevant maximal ideal of $R$. Let $M$ be the matrix 

\[
M=\begin{pmatrix}
x&y&z&w\\
w&x&y&z
\end{pmatrix}.
\]
Let $I=I_2(M)$ be the ideal generated by the 2-minors of $M$. This ideal was studied in \cite[4.14]{PX1}, $R/I$ is a one dimensional Cohen-Macaulay ring and $I$ has analytic spread $\ell(I)=4$. Also, $I$ is generically a complete intersection, then it satisfies $G_4$ and $AN_{2}^-$.  We can use Macaulay2 \cite{GS} to see that $I\neq J$ and $I\fm=J\fm$ for one (and then every) minimal reduction $J$ of $I$. Then $I$ is of Goto-minimal $j$-multiplicity. We conclude by Corollary \ref{ineqbound} that $r(I)=2$, and that the algebras $\R(I)$, $\G(I)$, and $\F(I)$ are Cohen-Macaulay. The ideal $I$ is of almost minimal $j$-multiplicity (see \cite[4.14]{PX1}), then the Cohen-Macaulayness of $\F(I)$ also follows by Corollary \ref{corminalm}.
\end{example}

\begin{example} The assumption $\depth \G(I)\gs d-2$ is necessary for the conclusion $\depth \F(I)\gs d-1$ in Theorem \ref{theo3}. This can be observed from \cite[4.6]{JV1}. Let $R=k[[x,y,z]]$ with $k$ an infinite field, $I=(-x^2+y^2,-y^2+z^2,xy,yz,zx)$, and $J=(-x^2+y^2,-y^2+z^2,xy)$. The ideal $J$ is a reduction of the $\fm$-primary ideal $I$ and $\ll(I\fm/J\fm)=1$, then $I$ is of almost Goto-minimal $j$-multiplicity. It can be shown that $\depth \G(I)=0$ and $\depth \F(I)=1$.    
\end{example}

\begin{example}\label{ex4}
Let $R=k[[x_1,\ldots,x_d]]$ with $k$ an infinite field, $d\gs 1$, and $\fm$ be the maximal ideal of $R$. Let $I$ be an $R$-ideal with analytic spread $\ell(I)=d$, grade $g$, and reduction number $r$. Assume $I$ satisfies $G_d$ and the depth inequalities $\depth R/I^j\gs d-g-j+1$ for $1\ls j\ls d-g$. If $I$ has a presentation matrix consisting of linear forms and $\mu(I)=d+1$, then by \cite[3.9]{CPU2} $\core(I)=I\fm$. Hence $I$ is of Goto-minimal $j$-multiplicity and by Corollary \ref{ineqbound} we have $r=d-g+1$. Also, $\G(I)$ and $\F(I)$ are Cohen-Macaulay. If $g\gs 2$, $\R(I)$ is Cohen-Macaulay as well.
\end{example}

 The following proposition provides a way of constructing ideals of Goto-mi\-ni\-mal $j$-multiplicity and almost Goto-minimal $j$-multiplicity starting from an ideal that is known to satisfy $G_d$ and $AN_{d-2}^-$ (e.g. strongly Cohen-Macaulay ideals satisfying $G_d$).

\begin{prop}\label{exprop}
Let $R$ be a Cohen-Macaulay ring of dimension $d$, with maximal ideal $\fm$, and infinite residue field. Let $I$ be an $R$-ideal with analytic spread $\ell(I)=d$ and satisfying $G_d$ and $AN_{d-2}^-$. Let $J$ be a minimal reduction of $I$. 
\begin{enumerate}
\item Let $K$ be an $R$-ideal such that $J\subseteq K\subseteq (J:\fm)\cap I $. Then $K$ is of Goto-minimal $j$-multiplicity satisfying $G_d$ and $AN_{d-2}^-$.
\item Assume $R$ contains a field. Let $a_1,\ldots, a_n$ be a set of minimal generators of $\fm$. Let $K$ be as in (1) and $a\in I\cap (J:\fm^2)\cap (J:a_1,\ldots,a_{n-1})\setminus (J:a_n)$. Let $H=K+(a)$, then $H$ is of almost Goto-minimal $j$-multiplicity satisfying $G_d$ and $AN_{d-2}^-$. 
 \end{enumerate}
\end{prop}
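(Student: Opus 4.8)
The plan is to treat the two parts in turn, both resting on the observation that the hypotheses force $K$ (respectively $H$) to coincide with $I$ after localizing at any non-maximal prime. For Part (1): since $J\subseteq K\subseteq I$ and $J$ is a reduction of $I$, we have $\overline J=\overline K=\overline I$, so $J$ is a reduction of $K$, and as $\mu(J)=\ell(I)=d$ it is a minimal reduction with $\ell(K)=d$. The key structural fact is that $K_{\mathfrak p}=I_{\mathfrak p}$ for every prime $\mathfrak p$ with $\height\mathfrak p<d$: indeed $\height(J:I)\ge d$ by Remark~\ref{reducht} gives $I_{\mathfrak p}=J_{\mathfrak p}$, while $K\subseteq J:\fm$ gives $\fm K\subseteq J$, hence $K_{\mathfrak p}=J_{\mathfrak p}$ for every such $\mathfrak p$ (which is necessarily $\ne\fm$). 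This yields $G_d$ for $K$ and shows $I/K$ is $\fm$-torsion, so $\fm^N I\subseteq K$ for some $N$. For $AN^{-}_{d-2}$: given a geometric $i$-residual intersection $L=(z_1,\dots,z_i):K$ with $i\le d-2$, set $L'=(z_1,\dots,z_i):I$; then $L'\subseteq L\subseteq L':\fm^N$, and since $L$ and $L'$ agree at every non-maximal prime, a height count shows $L'$ is a geometric $i$-residual intersection of $I$, so $R/L'$ is Cohen-Macaulay by $AN^{-}_{d-2}$ of $I$. As $L/L'\subseteq\HH{0}{\fm}{R/L'}$, this module vanishes when $\dim R/L'>0$, forcing $L=L'$, while if $\dim R/L'=0$ then $L'$, and so $L$, is $\fm$-primary; in either case $R/L$ is Cohen-Macaulay. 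Finally $K\fm=J\fm$, since $K\fm\subseteq J$, $K\fm\subseteq I\fm$, and $J\cap I\fm=J\fm$ because $\dim\F(I)=d=\mu(J)$ forces the images of a minimal generating set of $J$ in $[\F(I)]_1$ to be algebraically, hence linearly, independent. Theorem~\ref{mmsimp}(2) applied to $K$ then gives that $K$ is of Goto-minimal $j$-multiplicity.

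For Part (2), $J\subseteq K\subseteq H\subseteq I$ makes $J$ a minimal reduction of $H$ with $\ell(H)=d$, and since $K_{\mathfrak p}=I_{\mathfrak p}$ for $\height\mathfrak p<d$ the same holds for $H$, so the residual-intersection argument above shows $H$ satisfies $G_d$ and $AN^{-}_{d-2}$. The heart of the argument is the computation $\ell(H\fm/J\fm)=1$. Using $K\fm=J\fm$ we get $H\fm=K\fm+a\fm=J\fm+a\fm$; writing $a\fm=(aa_1,\dots,aa_n)$, the hypothesis $a\in I\cap(J:(a_1,\dots,a_{n-1}))$ gives $aa_i\in J\cap I\fm=J\fm$ for $i<n$, so $H\fm=J\fm+(aa_n)$. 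Moreover $aa_n\notin J$ because $a\notin J:a_n$, hence $aa_n\notin J\fm$, while $aa_n\fm\subseteq a\fm^2\subseteq J$ and $aa_n\fm\subseteq I\fm$ give $aa_n\fm\subseteq J\fm$; thus $H\fm/J\fm$ is cyclic, nonzero and killed by $\fm$, so of length $1$. To conclude, I would transfer this to the generic fibre exactly as in the proof of Theorem~\ref{mmsimp}: choosing the generators $x_1,\dots,x_d$ of $J$ as in Proposition~\ref{resprop}(1) and reducing modulo $(x_1,\dots,x_{d-1}):H^\infty$ gives $\ell(\overline{H\fm}/\overline{x_d\fm})=\ell(H\fm/J\fm)=1$, which by Lemma~\ref{welldefgoto} is the length occurring in the definition of almost Goto-minimal $j$-multiplicity. (Consistently, the generic value cannot be $0$, for then $H$ would be of Goto-minimal $j$-multiplicity and Theorem~\ref{mmsimp}(2) would give $H\fm=J\fm$, against $\ell(H\fm/J\fm)=1$.) The hypothesis that $R$ contains a field enters only in guaranteeing that an element $a$ with the stated colon properties exists.

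The hard part will be the inheritance of $AN^{-}_{d-2}$ by $K$ and $H$: one must control the geometric residual intersections of $K$ in terms of those of $I$, which is possible only because $K$ and $I$ differ by $\fm$-torsion alone. A secondary technical point, in Part (2), is making the passage from $\ell(H\fm/J\fm)=1$ to the invariant in the definition precise; as in Theorem~\ref{mmsimp}, this rests on the identity $\big((x_1,\dots,x_{d-1}):H^\infty\big)\cap H\fm=(x_1,\dots,x_{d-1})\fm$ valid when the $x_i$ are chosen as in Proposition~\ref{resprop}(1).
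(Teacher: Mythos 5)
Your overall architecture matches the paper's: show that any ideal $L$ with $J\subseteq L\subseteq I$ has $J$ as a minimal reduction (so $\ell(L)=d$), agrees with $I$ at every non-maximal prime (so $G_d$ holds), inherits $AN^-_{d-2}$, and then reduce everything to the two length computations $K\fm=J\fm$ and $\ll(H\fm/J\fm)=1$ together with Theorem \ref{mmsimp}(2). Two of your elaborations are genuine improvements in self-containedness: the paper disposes of the $AN^-_{d-2}$ inheritance by citing Ulrich's \cite[1.12]{U1}, whereas you prove it directly by comparing a geometric $i$-residual intersection $L=(z_1,\dots,z_i):K$ with $L'=(z_1,\dots,z_i):I$ and using that $L/L'$ is $\fm$-power torsion while $R/L'$ is Cohen--Macaulay of positive dimension (or both are $\fm$-primary); this argument is correct, since $\height L'$ and $\height(I+L')$ can be read off away from $\fm$ where $L$ and $L'$ (resp.\ $K$ and $I$) agree. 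Likewise your explicit verification that $H\fm/J\fm\cong k$ via the element $aa_n$ is exactly the computation the paper leaves implicit, and your derivation of $J\cap I\fm=J\fm$ from the algebraic independence of $x_1',\dots,x_d'$ in $[\F(I)]_1$ is the standard fact the paper uses silently.

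The one step that is not justified as written is the very last one in part (2): you compute $\ll(\overline{H\fm}/\overline{x_d\fm})=1$ for the \emph{specific} generating sequence of $J$ and then invoke Lemma \ref{welldefgoto} to identify this with "the length occurring in the definition." Lemma \ref{welldefgoto} only asserts that all \emph{general} choices of $x_1,\dots,x_d$ in $H$ yield the same length; it says nothing about a particular, possibly non-general, sequence such as the chosen generators of $J$, so it cannot by itself bridge the specific value to the generic one. The paper closes this gap with a specialization/semicontinuity result (\cite[3.1(b)]{MX}, which is where the hypothesis that $R$ contains a field actually enters — not, as you suggest, in the existence of $a$, which the proposition assumes) combined with Lemma \ref{onlyone}: general elements of $K$ generate a minimal reduction $J'$ with $J'\fm=K\fm=J\fm$, whence $\ll(H\fm/J'\fm)=1$ again. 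Your own parenthetical observation that the generic value cannot be $0$ (else Theorem \ref{mmsimp}(2) would force $H\fm=J\fm$) supplies the lower bound $\gs 1$; what you still need is the upper bound $\ls 1$ for \emph{general} elements of $H$, and that is precisely the semicontinuity statement you have not invoked. So the proof is repairable with tools already present in the paper, but as it stands this passage is a gap rather than an application of Lemma \ref{welldefgoto}.
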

\begin{proof}
Let $L$ be any ideal with $J\subseteq L\subseteq I$. From $I\subseteq \overline{J}$, the integral closure of $J$, we conclude $J$ is a minimal reduction of $L$, therefore $\ell(L)=d$. By Remark \ref{reducht} $J_{\fp}=L_{\fp}=I_{\fp}$ in the punctured spectrum, then $L$ satisfies $G_d$. By \cite[1.12]{U1}, $L$ satisfies $AN_{d-2}^-$. This shows that $J$ is a minimal reduction of $K$ and $H$, and that both satisfy $G_d$ and $AN_{d-2}^-$.

From $J\cap I\fm=J\fm$, we obtain $K\fm=J\fm$ and $\ll(H\fm/J\fm)=1$. By Theorem \ref{mmsimp}, (2) we conclude $K$ is of Goto-minimal $j$-multiplicity. If $R$ contains a field, by \cite[3.1 (b)]{MX} and Lemma \ref{onlyone}, (2) we have $\ll(H\fm/(x_1,\ldots x_d)\fm)= 1$ for general $x_1,\ldots,x_d$ in $K$, hence $H$ is of almost Goto-minimal $j$-multiplicity.  
\end{proof}

\begin{prop}\label{superex}
Let $R=k[[x_1,\ldots,x_d]]$ with $k$ an infinite field and $d\gs 3$. Let $I$ be a perfect $R$-ideal of height two satisfying $G_d$, with analytic spread $\ell(I)=d$, and $\mu(I)=n$. Assume $I$ has a presentation matrix of linear forms. After performing elementary row operations we can assume the $R$-ideal $J$ generated by the first $d$ generators is a reduction of $I$. Let $K$ be an $R$-ideal such that $J\subseteq K\subseteq I$, then $K$ is of Goto-minimal $j$-multiplicity if and only if $$K\subseteq I\fm^{n-d-1}+J.$$ 
For any of these ideals $K$, we have $\R(K)$, $\G(K)$, and $\F(K)$ are Cohen-Macaulay.
\end{prop}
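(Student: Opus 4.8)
The plan is to reduce the asserted criterion to one ideal‑theoretic equality, derive that equality from the linearity of the presentation matrix, and then read off the Cohen--Macaulayness from results already in hand. Since $I$ is perfect of height two it is licci, hence strongly Cohen--Macaulay, so, $R$ being regular, $I$ satisfies $AN^{-}_{d-2}$ by the list in Section~3. For any $K$ with $J\subseteq K\subseteq I$ we have $I\subseteq\overline{J}$, so $J$ is a minimal reduction of $K$, $\ell(K)=d$, and, exactly as in the first paragraph of the proof of Proposition~\ref{exprop}, $K$ again satisfies $G_d$ and $AN^{-}_{d-2}$. By Theorem~\ref{mmsimp}\,(2), $K$ is of Goto-minimal $j$-multiplicity if and only if $K\fm\subseteq J\fm$; as $J$ and $I\fm^{\,n-d-1}$ both lie in $I$, the stated equivalence amounts to the single equality $(J\fm:_{R}\fm)\cap I=J+I\fm^{\,n-d-1}$.

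Write the Hilbert--Burch presentation $0\to R^{n-1}\xrightarrow{\varphi}R^{n}\to I\to0$ with $\varphi$ of linear entries, and let $\varphi'$ be the $(n-d)\times(n-1)$ submatrix of the last $n-d$ rows, so $M:=I/J\cong\operatorname{coker}\varphi'$. Because $J$ is a reduction of $I$, $M$ has finite length (for $\fp\in V(I)$ with $\height\fp<d$ one has $I_{\fp}=J_{\fp}$ by Remark~\ref{reducht}), so the homogeneous Fitting ideal $I_{n-d}(\varphi')$ is $\fm$-primary and generated in degree $n-d$. The two facts I will need, both coming from the linearity of $\varphi$ and the generality of the first $d$ generators (where $G_d$ is used), are: \textnormal{(a)} $I_{n-d}(\varphi')=\fm^{\,n-d}$, so $\fm^{\,n-d}M=0$, i.e.\ $\fm^{\,n-d}I\subseteq J$; and \textnormal{(b)} $\operatorname{gr}_{\fm}(M)$, which by \textnormal{(a)} lives in degrees $0,\dots,n-d-1$, satisfies $\operatorname{soc}(\operatorname{gr}_{\fm}M)\subseteq[\operatorname{gr}_{\fm}M]_{n-d-1}$.

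Granting these, the equality follows. For "$\supseteq$": since $a_{1},\dots,a_{d}$ are part of a minimal generating set of $I$, the map $J/J\fm\to I/I\fm$ is injective, so $J\cap I\fm=J\fm$; with \textnormal{(a)} and $\fm^{\,n-d}I\subseteq I\fm$ this gives $I\fm^{\,n-d}\subseteq J\cap I\fm=J\fm$, whence $J+I\fm^{\,n-d-1}\subseteq(J\fm:\fm)\cap I$. For "$\subseteq$": if $x\in I$, $x\fm\subseteq J\fm$, $x\notin J+I\fm^{\,n-d-1}$, let $k\le n-d-2$ be maximal with $x\in J+I\fm^{\,k}$; the leading form of the image of $x$ in $\operatorname{gr}_{\fm}(M)$ is a nonzero element of degree $k$, so by \textnormal{(b)} it is not in the socle and there is $b\in\fm$ with $bx\notin J+I\fm^{\,k+2}$ — contradicting $bx\in x\fm\subseteq J\fm\subseteq J+I\fm^{\,k+2}$. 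I expect \textnormal{(b)}, and secondarily \textnormal{(a)}, to be the main obstacle — they assert that the cokernel of a sufficiently general $(n-d)\times(n-1)$ matrix of linear forms has the smallest possible annihilator and concentrates its socle in top degree — and I would prove them via an Eagon--Northcott/Buchsbaum--Rim analysis of $\operatorname{coker}\varphi'$, via the fiber cone $\F(I)\cong k[y_{1},\dots,y_{n}]/I_{d}(B)$ (with $\underline{y}^{\,\mathsf T}\varphi=\underline{x}^{\,\mathsf T}B$ and $B$ a $d\times(n-1)$ matrix of linear forms in the $y_i$), or by quoting the core computation for linearly presented perfect height‑two ideals from \cite{CPU2}.

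Finally, fix such a $K$. If $K=J$ then $r_{J}(K)=0$, so $\G(K)$ is Cohen--Macaulay by Lemma~\ref{filtrdepth}, $a(\G(K))<0$ by Theorem~\ref{SUV1}, hence $\R(K)$ is Cohen--Macaulay by Theorem~\ref{IT}, and $\F(K)$ by Theorem~\ref{FCM}. If $K\ne J$ then $K$ is of Goto-minimal $j$-multiplicity with $G_d$ and $AN^{-}_{d-2}$; since $R$ is Gorenstein and the depth inequalities $\depth R/K^{j}\ge d-g-j+1$ for $1\le j\le d-1$ hold — which I would check using the squeeze $J\subseteq K\subseteq I$, the identity $K^{n}\fm=J^{n}\fm$ for all $n$, and the strong Cohen--Macaulayness of $I$ — Corollary~\ref{ineqbound} gives $r_{J}(K)=d-g+1$ and the Cohen--Macaulayness of $\R(K)$, $\G(K)$, and $\F(K)$.
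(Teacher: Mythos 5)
Your first half follows the same road map as the paper: reduce Goto-minimal $j$-multiplicity of $K$ to the containment $K\subseteq (J:\fm)\cap I$ (equivalently $K\fm\subseteq J\fm$), get $I\fm^{n-d}\subseteq J$ from the Fitting ideal of the linear presentation of $I/J$, and get the reverse containment from a socle-degree computation for $I/J$. But you leave both of your facts (a) and (b) unproven, and (b) --- the concentration of $\soc(I/J)$ in top degree --- is precisely the heart of the proposition. The paper proves it by the first of the routes you list: since $\grade I_{n-d}(B)=d=(n-1)-(n-d)+1$, the Buchsbaum--Rim complex of the $(n-d)\times(n-1)$ linear matrix $B$ resolves $I/J$, its last shift is $-2(n-1)$, and $\projdim I/J=d$ then forces $\soc(I/J)$ to sit in degree $2(n-1)-d=2n-d-2$. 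Flagging a correct tool is not the same as running the argument, so as written the equivalence is not established.

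The final Cohen--Macaulayness step contains a genuine error, not just a gap. For $n\gs d+2$ the paper shows directly that $(I\fm^{n-d-1})^2\subseteq J^2$ and hence $K^2=JK$, so $r_J(K)\ls 1$; the Cohen--Macaulayness then follows from \cite[3.8]{PX1} and Theorem \ref{Theo1} (the case $n=d+1$ being Example \ref{ex4}). Your route instead invokes Corollary \ref{ineqbound}, whose conclusion for $K\neq J$ is $r(K)=s-g+1=d-1\gs 2$. That is incompatible with $r(K)\ls 1$, so for $n\gs d+2$ and $K\neq J$ the depth inequalities $\depth R/K^j\gs d-g-j+1$, $1\ls j\ls d-1$, which you propose to ``check'' from the squeeze $J\subseteq K\subseteq I$ and $K^j\fm=J^j\fm$, cannot in fact all hold; no amount of effort will verify them. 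You need a different argument here --- the direct computation $K^2=JK$ is the one that works, and once you have $r_J(K)\ls 1$ you do not need Corollary \ref{ineqbound} at all.
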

\begin{proof}
 Since $R/I$ is Cohen-Macaulay we have $J:\fm\subseteq I$ and then by Proposition \ref{exprop}, (1) and Theorem \ref{mmsimp}, (2) it would be enough to show $J:\fm=I\fm^{n-d-1}+J$.

We can assume $\mu(I)\gs d+1$. Let $A$ be an $n\times (n-1)$ presentation matrix of $I$ consisting of linear forms. Let $B$ be the $(n-d)\times (n-1)$ matrix consisting of the last $n-d$ rows of $A$, which is a presentation matrix of the module $I/J$. By Remark \ref{reducht}, $\ann I/J$ is $\fm$-primary, therefore $Fitt_0(I/J)=I_{n-d}(B)$ is an $\fm$-primary ideal (see \cite[Proposition 20.7]{E}). Since $B$ has linear entries, we have $I_{n-d}(B)=\fm^{n-d}$ and hence $I\fm^{n-d}\subseteq J$. This shows $I\fm^{n-d-1}+J\subseteq J:\fm$ and since $I$ is generated in degree $n-1$, the equality would follow once we prove $(J:\fm)/J=\soc(R/J)$ is generated by forms of degree $2n-d-2$.

For this notice that $\soc(R/J)=\soc(I/J)$ and the latter module is presented by $B$. It follows from $\grade I_{n-d}(B)=d=(n-1)-(n-d)+1$, that the Buchsbaum-Rim complex $\C^1$ of $B$ is exact (cf. \cite[Section A2.6.1]{E}). The module $I/J$ is generated in degree $n-1$ and every entry in $A$ is linear, hence the last shift of $\C^1$ is $-2(n-1)$. Since $\projdim I/J=d$, it is well known that the latter implies $\soc(I/J)$ is generated in degree $2(n-1)-d$ which proves the claim.

If $n=d+1$ the Cohen-Macaulayness of $\R(K)$, $\G(K)$, and $\F(K)$ follow by Example \ref{ex4}, so we can assume $n\neq d+1$. As $K$ satisfies $G_d$ and $AN_{d-2}^-$, by Theorem \ref{Theo1} and \cite[3.8]{PX1} it suffices to show $r(K)\ls 1$. We can assume $n\gs d+2$ and then $2n-2d-2\gs n-d$. It follows that 
$$(I\fm^{n-d-1})^2=I^2\fm^{2n-2d-2}\subseteq IJ$$ and hence $$I^2\fm^{2n-2d-2}\subseteq IJ\cap\fm^{4n-2d-4}=IJ\fm^{2n-2d-2}\subseteq J^2.$$
Let $K=K'+J$ where $K'\subseteq I\fm^{n-d-1}$. Then $$K^2=K'^2+K'J+J^2=K'J+J^2=JK$$which finishes the proof.
\end{proof}

\section*{Acknowledgements}

The author thanks Yu Xie for conversations that led him to initiate this project. He also would like to thank Mark Johnson for helpful conversations. The author thanks his PhD advisor, Bernd Ulrich, for his constant support and enlightening discussions. He is also grateful to the referee for her or his helpful corrections.

\end{document}